\tikzstyle arrowstyle=[scale=1]
\tikzstyle directed=[postaction={decorate,decoration={markings,mark=at position .65 with {\arrow[arrowstyle]{stealth}}}}]
\tikzstyle reverse directed=[postaction={decorate,decoration={markings,mark=at position .65 with {\arrowreversed[arrowstyle]{stealth};}}}]
\newtheorem{Def}{Definition}
\newtheorem{Th}{Theorem}[section]
\newtheorem{remark}{Remark}[section]
\newtheorem{theorem}{Theorem}[section]
\newtheorem{lemma}[theorem]{Lemma}
\theoremstyle{definition}
\numberwithin{equation}{section}
\newcommand{\bc}{\begin{center}}
\newcommand{\ec}{\end{center}}
\newcommand{\be}{\begin{eqnarray}}
\newcommand{\ee}{\end{eqnarray}}
\newcommand{\ben}{\begin{eqnarray*}}
\newcommand{\een}{\end{eqnarray*}}
\newcommand{\Om}{{\rm\Omega}}
\newcommand{\dx}{\,dx}
\newcommand{\ds}{\,ds}
\newcommand{\Rmnum}[1]{\expandafter\@slowromancap\romannumeral #1@}
\newcommand{\Rmath}{\mathbb{R}}
\newcommand{\Tma}{\mathcal{T}_h}
\newcommand{\VM}{V_{\rm M}}
\newcommand{\Mu}{u_{\rm M}}
\newcommand{\Mlam}{\lambda_{\rm M}}
\newcommand{\SHHJ}{\Sigma(\Tma)}
\newcommand{\UHHJ}{U(\Tma)}
\newcommand{\PiHHJ}{\Pi_{\rm HHJ}}
\newcommand{\sHHJ}{\sigma_{\rm HHJ}}
\newcommand{\uHHJ}{u_{\rm HHJ}}
\newcommand{\sHHJc}{\sigma_{\rm HHJ}^{\lambda u}}
\newcommand{\uHHJc}{u_{\rm HHJ}^{\lambda u}}
\newcommand{\Mum}{\widetilde{u}_{\rm M}}
\newcommand{\Mumc}{\widetilde{u}_{\rm M}^{\lambda u}}
\newcommand{\Muc}{u_{\rm M}^{\lambda u}}
\newcommand{\sHHJf}{\sigma_{\rm HHJ}^f}
\newcommand{\uHHJf}{u_{\rm HHJ}^f}
\newcommand{\phiH}{\phi_{\rm HHJ}} 
\newcommand{\phiM}{\phi_{\rm M}} 
\newcommand{\Pit}{\Pi^3}
\newcommand{\Pif}{\Pi^4}
\newcommand{\Pik}{\Pi^l}
\newcommand{\PiM}{\Pi_{\rm M}}
\newcommand{\HHJcro}{\gamma_{\rm HHJ}}
\newcommand{\cE}{\mathcal{E}}
\newcommand{\cO}{\mathcal{O}}
\newcommand{\cT}{\mathcal{T}}
\newcommand{\R}{\mathbb{R}}
\def\S{\mathcal{S}}
\newcommand{\bp}{\boldsymbol{p}}
\newcommand{\bM}{\boldsymbol{m}}
\newcommand{\bt}{\boldsymbol{t}}
\newcommand{\bn}{\boldsymbol{n}}
\newcommand{\bx}{\boldsymbol{x}}
\renewcommand{\arraystretch}{1.5}
\newcommand{\yemeifont}{\fontsize{9pt}{\baselineskip}\selectfont}
\begin{document}
\title{
New Fourth Order Postprocessing Techniques for Plate Bending Eigenvalues by Morley Element
}
%\subtitle{Do you have a subtitle?\\ If so, write it here}

%\titlerunning{Short form of title}        % if too long for running head

%\authorrunning{Short form of author list} % if too long for running head
\author{Limin Ma}
\address{Department of Mathematics, Pennsylvania State University, University Park, PA,
16802, USA. maliminpku@gmail.com}

\author {Shudan Tian}
\address{LMAM and School of Mathematical Sciences, Peking University,
  Beijing 100871, P. R. China.  tianshudan@pku.edu.cn}

\maketitle

\begin{abstract}
In this paper, we propose and analyze the extrapolation method and asymptotically exact a posterior error estimate for eigenvalues of the Morley element. We establish an asymptotic expansion of eigenvalues, and prove an optimal result for this expansion and the corresponding extrapolation method.
We also design an asymptotically exact a posterior error estimate and  propose new approximate eigenvalues with higher accuracy by utilizing this a posteriori error estimate. Finally, several numerical experiments  are considered to confirm the theoretical results and compare the performance of the proposed methods.

  \vskip 15pt

\noindent{\bf Keywords. }{eigenvalue problem, Morley element, extrapolation method,  asymptotically exact a posterior error estimates}

 \vskip 15pt

\noindent{\bf AMS subject classifications.}
    { 65N30}

\end{abstract}

\section{Introduction}
The biharmonic eigenvalue problem originates from the plate theory of elasticity, and also occurs in many physical areas, say the inverse scattering theory. In the Kirchhoff-Love plate model, the biharmonic eigenvalue problem describes the vibration and buckling of an elastic plate subject to some certain boundary condition. The Morley element method is one of the most popular methods for this problem in applied mechanics and engineering, and is widely studied in literature. 
The application of the Morley element in plate problems can be found in \cite{brenner2013morley,ciarlet1974conforming,li2014new,morley1968triangular} and the references therein. Some a posteriori error estimates and adaptive algorithms were established in \cite{da2007posteriori,hu2009new,hu2012convergence}.
For eigenvalue problems by the Morley element, the a priori error estimates were analyzed in \cite{gallistl2015morley,rannacher1979nonconforming} and an a posteriori error estimate was analyzed in \cite{shen2015posteriori}. Guaranteed lower bounds for eigenvalues of the biharmonic equation was proposed and analyzed in \cite{carstensen2014guaranteed,hu2016guaranteed}.

The extrapolation method is an efficient approach to improve the accuracy of approximations of many problems. The key of the efficiency of extrapolation algorithm is an asymptotic expansion of the error. 
%Extrapolation methods are efficient techniques to achieve high accuracy and  based  on asymptotic expansions of approximate solutions.
The classical analysis of asymptotic expansions is usually based on a superclose property of the canonical interpolation of the element under consideration, 
see \cite{Blum1990Finite,Ding1990quadrature,Lin2008New,Lin2009New,Lin2007Finite,Lin1984Asymptotic,lin1999high,Lin2009Asymptotic,lin2010new,Lin2011Extrapolation} and the references therein for eigenvalues of second order elliptic operators. For the biharmonic eigenvalue problem, the asymptotic expansions of eigenvalues by the Ciarlet-Raviart scheme and the nonconforming rotated $Q_1$ and the enriched rotated $Q_1$ elements on rectangular meshes were analyzed in  \cite{Chen2007Asymptotic} and \cite{Jia2010Approximation}, respectively.
For some nonconforming elements on triangular meshes, the lack of  this crucial superclose property  leads to a substantial difficulty of the asymptotic analysis. Until recently, \cite{hu2019asymptotic} proved the first optimal asymptotic result of two nonconforming elements for eigenvalues of the Laplacian operator. 

Asymptotically exact a posteriori error estimates is another efficient technique to improve the accuracy of eigenvalues. The key of such a posteriori error estimates is to express the error in terms of some computable high accuracy approximations. For eigenvalues of the Laplacian operator, \cite{hu2020asymptotically,naga2006enhancing} studied the asymptotically exact a posteriori error estimates for some conforming and nonconforming elements.

In this paper, we establish the first asymptotic analysis for eigenvalues by the Morley element and analyze the efficiency of the extrapolation algorithm. Inspired by \cite{hu2019asymptotic}, we overcome the difficulty caused by the lack of a crucial superclose property and get 
\begin{equation}\label{eq:intro} 
\lambda-\Mlam=\parallel (I - \PiHHJ)\nabla^2 u  \parallel_{0,\Om}^2 
+ 2I_1 + 2I_2 - 2I_3 + \cO(h^4|u|_{\frac{9}{2},\Om}^2),
\end{equation}   
where $(\Mlam, \Mu)$  is an eigenpair by the Morley element, the interpolation operator $\PiHHJ$ is defined in \eqref{def:fortin} and $I_1$, $I_2$, $I_3$ are defined in \eqref{crI}. To achieve an optimal result, we conduct a new technical analysis for each term on the right hand side of \eqref{eq:intro}. The analysis in this paper is quite different from the one in \cite{hu2019asymptotic}  because some natural orthogonal property is absent in this case.
We establish an explicit expression with a vanishing subdominant term for the interpolation in \cite{hu2012lower}. By use of this expression, we can cancel some suboptimal terms in $\parallel (I - \PiHHJ)\nabla^2 u  \parallel_{0,\Om}^2$ and get the desired optimal expansion.
By employing the commuting property and the equivalence with the HHJ element, we express $I_1$ in terms of  the second order accuracy term $\uHHJ$, instead of the first order accuracy term $\sHHJ$. In this way, we achieve an optimal estimate of $I_1$, where $\uHHJ$ and $\sHHJ$ are defined in \eqref{def:sigfeig}.
We express the consistency error term $I_2$ in terms of jumps along interior edges, which allows some cancellation and is the key to a desired optimal analysis.
For $I_3$, we establish an explicit expression of the interpolation error of the Morley element, and cancel the suboptimal terms between adjacent elements forming a parallelogram, which is crucial in getting the optimal result.

We also design an asymptotically exact a posteriori error estimate and the corresponding approximate eigenvalue by the Morley element. By a simple postprocessing technique, the accuracy of the approximate eigenvalue can be improved to $\mathcal{O}(h^3)$. Numerical results show that this postprocessing technique is effective on both uniform and adaptive meshes, and achieves better performance than the extrapolation method.

The remaining paper is organized as follows. Section 2 presents fourth order elliptic eigenvalue problems and some notations. Section 3 explores an optimal asymptotic expansion of approximate eigenvalues by the Morley element and analyzes the optimal convergence rate of eigenvalues by the extrapolation method. Section 4 proposes and analyzes an asymptotically exact a posterior error estimate of eigenvalues by the Morley element. Section 5 presents some numerical tests.

\section{Notations and Preliminaries}
\subsection{Notations}\label{sec:notation}
Given a nonnegative integer $k$ and a bounded domain $\Om\subset \mathbb{R}^2$ with boundary $\partial \Om$, let $W^{k,\infty}(\Om,\mathbb{R})$, $H^k(\Om,\mathbb{R})$, $\parallel \cdot \parallel_{k,\Om}$ and $|\cdot |_{k,\Om}$ denote the usual Sobolev spaces, norm, and semi-norm, respectively. The Sobolev spaces
$$
H_0^1(\Om,\mathbb{R}) = \{u\in H^1(\Om,\mathbb{R}): u|_{\partial \Om}=0\},\ H_0^2(\Om,\mathbb{R}) = \{u\in H^2(\Om,\mathbb{R}): u|_{\partial \Om}={\partial u\over \partial \bn}|_{\partial \Om}=0\}.
$$
Denote the standard $L^2(\Om,\mathbb{R})$ inner product and $L^2(K,\mathbb{R})$ inner product by $(\cdot, \cdot)$ and $(\cdot, \cdot)_{0,K}$, respectively.

Suppose that $\Om\subset \mathbb{R}^2$ is a convex polygonal domain and the partition $\cT_h$ of domain $\Om$ is assumed to be uniform in the sense that any two adjacent triangles form a parallelogram. Let $|K|$ denote the area of element $K$ and $|e|$ the length of edge $e$. Let $h_K$ denote the diameter of element $K\in \cT_h$ and $h=\max_{K\in\cT_h}h_K$. Denote the set of all interior edges and boundary edges of $\cT_h$ by $\cE_h^i$ and $\cE_h^b$, respectively, and $\cE_h=\cE_h^i\cup \cE_h^b$. 

Let element $K$ have vertices $\bold{p}_i=(p_{i1},p_{i2}),1\leq i\leq 3$ oriented counterclockwise, and corresponding barycentric coordinates $\{\psi_i\}_{i=1}^3$. Let $M_K=(M_1, M_2)$ denote  the centroid of the element, $\{e_i\}_{i=1}^3$ the edges of element $K$, $\{d_i\}_{i=1}^3$ the perpendicular heights,  $\{\theta_i\}_{i=1}^3$ the internal angles, $\{\bold{m}_i\}_{i=1}^3$ the  midpoints of edges
 $\{e_i\}_{i=1}^3$, and $\{\bold{n}_i\}_{i=1}^3$ the unit outward normal vectors, $\{\bold{t}_i\}_{i=1}^3$ the unit tangent vectors with counterclockwise orientation (see Fig.~\ref{fig:geometric}). 
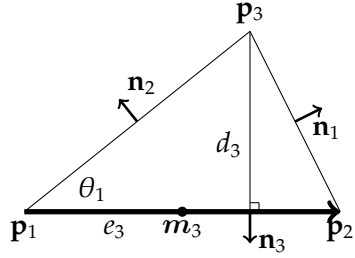
\begin{figure}[!ht]
\begin{center}
\begin{tikzpicture}[xscale=6,yscale=6]
\tikzstyle{every node}=[font=\Large,scale=0.9]
%\draw[-][line width=1pt](0,0) -- (0.4,0);
\draw[->][line width=2pt](0,0) -- (0.7,0);
\draw[-] (0,0) -- (0.5,0.4);
\draw[-] (0.7,0) -- (0.5,0.4);
\draw[-] (0.5,0) -- (0.5,0.4);
\draw[-] (0.5,0.02) -- (0.52,0.02);
\draw[-] (0.52,0.02) -- (0.52,0);
\draw[->][line width=1pt] (0.5,0) -- (0.5,-0.07);
%\draw[->][line width=1pt] (0.25,0.2) -- (0.2,0.16);
%\draw[->][line width=1pt] (0.6,0.2) -- (0.57,0.26);
\draw[->][line width=1pt] (0.6,0.2) -- (0.66,0.23);
\draw[->][line width=1pt] (0.25,0.2) -- (0.21,0.25);
\node[below] at (0,0) {$\bold{p}_1$};
\node[below] at (0.7,0) {$\bold{p}_2$};
\node[above] at (0.5,0.4) {$\bold{p}_3$};
\node[below] at (0.35,0) {$\bM_3$};
\draw [fill] (0.35,0) circle [radius=0.01];
\node[below] at (0.2,0) {$e_3$};%=l_3\bold{t}_3
%\node[left] at (0.2,0.18) {$\bold{t}_2$};
%\node[right] at (0.57,0.28) {$\bold{t}_1$};
\node[right] at (0.5,-0.07) {$\bold{n}_3$};
\node[right] at (0.62,0.18) {$\bold{n}_1$};
\node at (0.26,0.28) {$\bold{n}_2$};
\node[left] at (0.5,0.15) {$d_3$};
\node[right] at (0.1,0.05) {$\theta_1$};
%\node[left] at (0.63,0.05) {$\theta_2$};
%\node[below] at (0.45,0.35) {$\theta_3$};
\end{tikzpicture}
\end{center}
\caption{Paramters associated with a triangle $K$.}
\label{fig:geometric}
\end{figure}
There hold the following relationships
$
d_i|e_i|=2|K|
$
and
\begin{equation}\label{nlambda}
\begin{split}
\nabla \psi_i=-\frac{\bold{n}_i}{d_i},\quad  
\sin \theta_i=\bn_{i-1}\cdot \bt_{i+1} = -\bn_{i+1}\cdot \bt_{i-1},
\\
\cos \theta_{i}
=-\boldsymbol{n}_{i-1} \cdot \boldsymbol{n}_{i+1}= {|e_{i-1}|^2 + |e_{i+1}|^2 - |e_{i}|^2\over 2|e_{i-1}||e_{i+1}|}
\end{split}
\end{equation}
among the quantities \cite{huang2008superconvergence}. 
For $K\subset\R^2,\ r\in \mathbb{Z}^+$, let $P_r(K, \R)$ be the space of all polynomials of degree not greater than $r$ on $K$. 
Denote the piecewise Hessian operator by $\nabla_h^2$. For any $1\leq i, j, k\leq 2$, denote the third order derivative $\frac{\partial^3 v}{\partial x_i\partial x_j\partial x_k}$ by $\partial_{ijk} v$. For any $\alpha=(\alpha_1, \alpha_2)$, denote
$$
|\alpha|=\alpha_1 + \alpha_2,\qquad \alpha !=\alpha_1!\alpha_2!,\qquad D^\alpha v=\partial_1^{\alpha_1}\partial_2^{\alpha_2}v.
$$
For ease of presentation,  the symbol $A\lesssim B$ will be used to denote that $A\leq CB$, where $C $ is a positive constant.
\subsection{Morley element for eigenvalue problems}\label{sec:model}

Consider the biharmonic eigenvalue problem for plate bending, which finds $\lambda$ and $\|u\|_{0, \Om}=1$ such that
\begin{equation}\label{model}
\Delta^2 u =\lambda u,\quad \text{ in }\Omega,
\end{equation}
with the clamped boundary condition
\begin{equation}\label{bc:cb}
u|_{\partial \Omega}={\partial u\over \partial \bn}\big|_{\partial \Om}=0,
\end{equation}
or the simply supported boundary condition
\begin{equation}\label{bc:ssc}
u|_{\partial \Omega}={\partial^2 u\over \partial \bn^2}\big|_{\partial \Om}=0.
\end{equation}
The weak formulation for \eqref{model} is to find $(\lambda, u)\in \mathbb{R}\times V$  such that  $\parallel u \parallel_{0,\Om}=1$ and
\be\label{variance}
a(u, v)=\lambda(u, v)\quad  \forall \ v\in V,
\ee
with 
$\displaystyle
a(w, v)=\int_{\Om} \nabla^2  w: \nabla^2 v \dx
$ and 
\begin{equation*}
V=\begin{cases}
H_0^2(\Om,\mathbb{R}) & \mbox{ for clamped boundary plates with }\eqref{bc:cb},
\\
H^2(\Om,\R)\cap H_0^1(\Om,\mathbb{R})& \mbox{ for simply supported plates with }\eqref{bc:ssc}.
\end{cases}
\end{equation*}
The bilinear form
$
a(\cdot, \cdot) 
$
is symmetric, bounded, and coercive, namely
$$
a(w,v)=a(v,w),\quad |a(w,v)|\lesssim \parallel w\parallel_{2,\Om}\parallel v\parallel_{2,\Om},\quad \parallel v\parallel_{2,\Om}^2\lesssim a(v,v),\quad \forall w, v\in V.
$$
The eigenvalue problem \eqref{variance} has a sequence of eigenvalues
$$0<\lambda^1\leq \lambda^2\leq \lambda^3\leq ...\nearrow +\infty,$$
and the corresponding eigenfunctions
$u^1, u^2, u^3,... ,$
with
$(u^i, u^j)=\delta_{ij}.
$
%The regularity of eigenfunctions was discussed in \cite{blum1980boundary,blum1990mixed} and the references therein.
%Since the domain $\Om$ is convex, $u_i\in H^2(\Om, \R)\cap H^1_0(\Om, \R)$.

The nonconforming Morley element space  $\VM$ over $\cT_h$ is defined \cite{ming2006morley,morley1968triangular} by
\begin{equation*}
\begin{split}
\VM:=&\big \{v\in L^2(\Om,\R)\big|v|_K\in P_2(K, \R)\text{ for any }  K\in\cT_h, v\ \text{is continuous at each interior}\\
& \text{ vertex and vanishes on each boundary vertex},\ \int_e \big [{\partial v\over \partial n}\big ]\ds =0\text{ for any }  e\in \cE_h^i,\\
&\text{ and } \int_e {\partial v\over \partial n}\ds=0\text{ for any }  e\in \cE_h^b\big\}
\end{split}
\end{equation*} 
if the clamped boundary condition \eqref{bc:cb} is imposed, and 
\begin{equation*}
\begin{split}
\VM:=&\big \{v\in L^2(\Om,\R)\big|v|_K\in P_2(K, \R)\text{ for any }  K\in\cT_h, v\ \text{is continuous at each interior}\\
& \text{ vertex and vanishes on each boundary vertex},\ \int_e \big [{\partial v\over \partial n}\big ]\ds =0\text{ for any }  e\in \cE_h^i\big\}
\end{split}
\end{equation*}
if the simply supported boundary condition \eqref{bc:ssc} is imposed.
The corresponding canonical interpolation operator $\PiM: V\rightarrow \VM $ is defined by
\begin{equation}\label{ecrinterpolation}
\int_e{\partial \PiM v\over \partial n}\ds= \mathlarger{\int}_e {\partial v\over \partial n}\ds, \forall e\in\cE_h, \quad  \PiM v(\bp)= v(\bp)\quad\text{ for any vertex }  \bp.
\end{equation}
The corresponding finite element approximation of \eqref{variance} is to find $(\Mlam, \Mu)\in \mathbb{R}\times \VM$  such that $\parallel \Mu\parallel_{0,\Om}=1$ and
\be\label{discrete}
a_h(\Mu,v_h)=\Mlam(\Mu, v_h)\quad \forall \ v_h\in \VM,
\ee
with the discrete bilinear form
$
\displaystyle a_h(w_h,v_h):=\sum_{K\in\cT_h}\int_K \nabla_h^2 w_h: \nabla_h^2 v_h\dx. 
$
Denote the approximate solution of \eqref{discrete} by $(\Mlam^i,\Mu^i)$ with 
$
0<\Mlam^1\le \Mlam^2\le \cdots \nearrow  \Mlam^{N_V},
$
where $N_V={\rm dim }\VM$ and $(\Mu^i, \Mu^j)=\delta_{ij}$, $1\le i,\ j\le N_V$.

Let $\lambda$ be an eigenvalue of Problem \eqref{variance} with multiplicity $q$ and 
$$
M(\lambda)=\{w\in  V: w \mbox{ is an eigenvector of Problem \eqref{variance} corresponding to}\ \lambda\}.
$$ 
Without loss of generality, assume the index of $\lambda$ are $k_0+1, \cdots, k_0+q$, that is,
$
\lambda^{k_0}<\lambda=\lambda^{k_0+1}=\cdots = \lambda^{k_0+q}<\lambda^{k_0+q+1}.
$
Denote
\begin{align*}
M_h(\lambda) &= {\rm span}\{\Mu^{k_0+1},\ \Mu^{k_0+2}, \cdots,\ \Mu^{k_0+q}\}\subset \VM.
\end{align*}
%For any $u\in M(\lambda)$, 
%\begin{align*}
%\| u - \CRu\|_0&\lesssim \|\nabla_h (u - \CRu)\|_0\sup_{w\in M(\lambda), \|w\|_0=1} \inf_{v\in V_{\rm CR}}\|\nabla_h(w-v)\|_0,
%\\
%\| u - \ECRu\|_0&\lesssim \|\nabla_h (u - \ECRu)\|_0\sup_{w\in M(\lambda), \|w\|_0=1} \inf_{v\in V_{\rm ECR}}\|\nabla_h(w-v)\|_0.
%\end{align*}
Suppose that $(\Mlam, \Mu)$ is the $i$-th eigenpair of Problem \eqref{discrete} by the Morley element, the theory of nonconforming eigenvalue approximations, see for instance, 
\cite{babuvska1987estimates,babuvska1989finite,boffi2017posteriori,Hu2014Lower,rannacher1979nonconforming} 
%\cite{boffi2017posteriori,babuvska1987estimates,Hu2014Lower,rannacher1979nonconforming,babuvska1989finite} 
and the references therein, indicates that  there exists $u\in M(\lambda)$ with $\lambda= \lambda^i$ such that
\begin{equation}\label{M:est}
\begin{split}
h|\lambda-\Mlam|+ h^{j}|u- \PiM u|_{j, h}+ h\parallel u- \Mu\parallel_{0,\Om} 
+ h^j\parallel \nabla_h^j (u-\Mu)\parallel_{0,\Om}&\lesssim h^{3}\parallel u\parallel_{3,\Om}
\end{split}
%checked
\end{equation} 
provided that the domain is convex and $M(\lambda)\subset  V\cap H^3(\Om)$, where $j=1$, $2$. Whenever there is no ambiguity,  $(\lambda, u)$ defined this way is the called the corresponding eigenpair to $(\Mlam, \Mu)$ of Problem \eqref{discrete} if the estimate  \eqref{M:est} holds. 

For the Morley element, there holds the following commuting property \cite{Crouzeix1973Conforming,Hu2014Lower}
\begin{equation}\label{commuting} 
\int_K \nabla^2(w - \PiM w): \nabla^2  v_h \dx =0\quad \forall\ w\in V, v_h\in \VM.
\end{equation} 
%This commuting property is crucial for analyzing asymptotic expansions of eigenvalues by these two nonconforming elements.
This, together with the technique in \cite{hu2020asymptotically,hu2019asymptotic}, guarantees the following expansion
\begin{equation}\label{commutId}
\lambda-\Mlam =\|\nabla_h^2 (u - \Mu)\|_{0, \Om}^2-2\lambda (u-\PiM u, u ) + \cO(h^4|u|_{3, \Om}^2).
\end{equation}
The asymptotic expansion in this paper is based on this crucial identity \eqref{commutId}. 

\subsection{Hellan--Herrmann--Johnson element for source problems}
For any source term $f$, the plate bending problem seeks $u^f\in V$ such that
\be\label{mixvariance}
\Delta^2 u^f=f
\ee
with boundary condition \eqref{bc:cb} or \eqref{bc:ssc}. Define 
\begin{equation*} 
D=\{v\in H^1_0(\Om, \R): v|_K\in H^2(K, \R)\}
\end{equation*}
and the space for  an auxiliary variable $\sigma^f :=\nabla^2 u^f$  by 
\begin{equation*} 
S=\{\tau\in L^2(\Om, \S): \tau|_K\in H^1(K, \S), \text{ and } M_{\bn\bn}(\tau) \text{ is continuous across interior edges}\}
\end{equation*}
with $\S:=\text{symmetric }\Rmath^{2\times2}$ if the clamped boundary condition \eqref{bc:cb} is imposed and 
\begin{equation*} 
\begin{split}
S=\{\tau\in L^2(\Om, \S): &\ \tau|_K\in H^1(K, \S), \text{ and } M_{\bn\bn}(\tau) \text{ is continuous across interior edges,}
\\
&\ M_{\bn\bn}(\tau)=0  \text{ on boundary edges}\}
\end{split}
\end{equation*}
if the simply supported boundary condition \eqref{bc:ssc} is imposed.

Given $K\in\Tma$ and $\tau\in H^1(K,\S)$, let 
$$
M_{\bn\bn}(\tau)=\bn^T\tau \bn,\quad M_{\bn\bt}(\tau)=\bt^T\tau \bn
$$
with the unit outnormal  $\bn$ and unit tangential direction $\bt$ with counterclockwise orientation of $\partial K$. 
%Since $u^f\in V$, it holds that $[{\partial u^f\over \partial \bn}]|_e=0$ for any interior edges $e$. Thus
%$$
%\sum_{K\in\cT_h}\int_{\partial K} M_{\bn\bn}(\tau){\partial u^f\over \partial \bn} \ds=\sum_{e\in\cE_h}\int_e M_{\bn\bn}(\tau)[{\partial u^f\over \partial \bn}] \ds=0.
%$$
Since $\sigma^f\bn : \nabla v=M_{\bn\bn}(\sigma^f)\frac{\partial v}{\partial \bn} + M_{\bn\bt}(\sigma^f)\frac{\partial v}{\partial \bt}$, the integration by parts gives
\begin{equation}
(f, v)= (\sigma^f, \nabla^2 v) + \sum_{K\in\cT_h}\int_{\partial K} (\nabla\cdot \sigma^f)\cdot \bn v - M_{\bn\bn}(\sigma^f){\partial v\over \partial \bn} - M_{\bn\bt}(\sigma^f)\frac{\partial v}{\partial \bt} \ds.
\end{equation}
Note that $v\in D$ is continuous on interior edges and zero on the boundary $\partial \Omega$, so is the tangential derivative of $v$. Thus,
\begin{equation}
(f, v)= (\sigma^f, \nabla^2 v) -  \sum_{K\in\cT_h}\int_{\partial K} M_{\bn\bn}(\sigma^f){\partial v\over \partial \bn} \ds.
\end{equation}
The mixed formulation of source problem \eqref{mixvariance}, which was analyzed in  \cite{johnson1973convergence}, seeks $(\sigma^f, u^f)\in S\times D$ such that
\begin{equation}\label{mixmodel}
\begin{aligned}
(\sigma^f, \tau) + \sum_{K\in\cT_h} -(\tau, \nabla^2 u^f)_{0, K} + \int_{\partial K} M_{\bn\bn}(\tau){\partial u^f\over \partial \bn} \ds&=0,\ &\forall \tau \in S,
\\
\sum_{K\in\cT_h} -(\sigma^f, \nabla^2 v)_{0, K} + \int_{\partial K} M_{\bn\bn}(\sigma^f){\partial v\over \partial \bn} \ds&=-
(f,v), \ &\forall v\in D.
\end{aligned}
\end{equation} 
Define the discrete spaces \cite{arnold2020hellan,chen2018multigrid} 
$$
\UHHJ:=\big \{v\in D: v|_K\in P_1(K, \R)\text{ for any }K\in \cT_h\big \},
$$ 
$$
\SHHJ:= 
\big \{\tau\in S: \tau|_K\in P_0(K, \S)\text{ for any }K\in \cT_h\big \}.
$$
The first order HHJ element \cite{chen2018multigrid,johnson1973convergence} of Problem \eqref{mixmodel} seeks $(\sHHJf, \uHHJf)\in \SHHJ\times \UHHJ$ such that  
\begin{equation}\label{eq:HHJ}
\begin{aligned}
(\sHHJf, \tau_h) + \sum_{e\in\cE_h}  \int_{e} M_{\bn\bn}(\tau_h)[{\partial \uHHJf\over \partial \bn}] \ds&=0,\ &\forall \tau_h \in \SHHJ,
\\
\sum_{e\in\cE_h}  \int_{e} M_{\bn\bn}(\sHHJf)[{\partial v_h\over \partial \bn}] \ds&=-(f,v_h), \ &\forall v_h\in \UHHJ.
\end{aligned}
\end{equation}
It follows from the theory of mixed finite element methods \cite{comodi1989hellan} that
\be\label{RT:est}
\parallel u^f- \uHHJf\parallel_{0,\Om} + h\parallel \sigma^f -\sHHJf\parallel_{0,\Om}+h |u^f- \uHHJf |_{1,\Om}\lesssim h^{2}\parallel u^f\parallel_{3,\Om},
\ee
provided that $u^f\in V\cap H^{3}(\Om,\mathbb{R})$.  In this paper, we consider two different source terms $f=\lambda u$ and $f=\Mlam \Mu$. Let $(\sHHJc, \uHHJc)$ and $ (\sHHJ, \uHHJ)\in \SHHJ\times \UHHJ$ be the solutions of Problem \eqref{eq:HHJ} with source terms $f=\lambda u$ and $f=\Mlam \Mu$, respectively. 
Then,
\begin{equation}\label{def:sigfeig}
\sHHJ= \sigma_{\rm HHJ}^{\Mlam\Mu},\quad \uHHJ= u_{\rm HHJ}^{\Mlam\Mu}.
\end{equation}
In the rest of this paper, denote $\sigma=\nabla^2 u$ where $u$ is the eigenfunction of Problem \eqref{model}.
%The first order HHJ element \cite{johnson1973convergence} of Problem \eqref{mixmodel} seeks $(\sHHJc, \uHHJc)\in \SHHJ\times \UHHJ$ such that 
%\begin{equation}\label{eq:HHJc}
%\begin{aligned}
%(\sHHJc, \tau_h) + \sum_{K\in\cT}  \int_{\partial K} M_{nn}(\tau_h){\partial \uHHJc\over \partial n} \ds&=0,\ &\forall \tau_h \in \SHHJ,
%\\
%\sum_{K\in\cT}  \int_{\partial K} M_{nn}(\sHHJc){\partial v_h\over \partial n} \ds&=(\lambda u,v_h), \ &\forall v_h\in \UHHJ,
%\end{aligned}
%\end{equation}
%with
%\ben
%\SHHJ:=\big \{\tau\in S: \tau|_K\in P_0(K, \S)\text{ for any }K\in \cT_h\big \},
%\\
%\UHHJ:=\big \{v\in H^1_0(\Om, \R): v|_K\in P_1(K, \R)\text{ for any }K\in \cT_h\big \}.
%\een
%Consider the HHJ element of another discrete problem which seeks $(\sHHJ, \uHHJ)\in \SHHJ\times \UHHJ$ such that 
%\begin{equation}\label{eq:HHJ}
%\begin{aligned}
%(\sHHJ, \tau_h) + \sum_{K\in\cT}  \int_{\partial K} M_{nn}(\tau_h){\partial \uHHJ\over \partial n} \ds&=0,\ &\forall \tau_h \in \SHHJ,
%\\
%\sum_{K\in\cT}  \int_{\partial K} M_{nn}(\sHHJ){\partial v_h\over \partial n} \ds&=(\Mlam \Mu,v_h), \ &\forall v_h\in \UHHJ.
%\end{aligned}
%\end{equation}

Define the interpolation operator $\PiHHJ: S\rightarrow \SHHJ$ by
\begin{equation}\label{def:fortin}
\int_e M_{\bn\bn}(\PiHHJ \tau)\ds=\int_e M_{\bn\bn}(\tau)\ds\text{\quad for any }e\in \cE_h, \bold{\tau}\in S.
\end{equation}
There exists the following identity in \cite{Hu2015The} that
\begin{equation}\label{eq:HHJid}
(\sHHJc - \PiHHJ \sigma, \sHHJc - \sigma)=0.
\end{equation}

For the plate bending problem with the clamped boundary condition \eqref{bc:cb}, it was analyzed in \cite{hu2021optimal} that the HHJ element admits an important superconvergence property on uniform triangulations as presented below. For Problem \eqref{model} with the simply supported condition \eqref{bc:ssc}, $M_{\bn\bn}(\sHHJc - \PiHHJ \sigma)=0$ for all edges $e\in \cE_h^b$. A simple extension of the analysis in \cite{Hu2016Superconvergence} proves the  superconvergence property of the HHJ element when \eqref{bc:ssc} is imposed.
\begin{lemma}\label{Lm:super}
Suppose that $(\sHHJc,\uHHJc)$ is the solution of Problem \eqref{eq:HHJ} with boundary condition \eqref{bc:cb} or \eqref{bc:ssc} on a uniform triangulation, $f=\lambda u$ and $u\in V\cap H^{r}(\Om,\mathbb{R})$. It holds that
\begin{equation}\label{eq:super}
\parallel \sHHJc-\PiHHJ\sigma\parallel_{0,\Om}\lesssim h^2 \big (| u|_{r,\Om}+\kappa |\ln h|^{1/2}|u|_{3,\infty,\Om}\big ).
\end{equation}
where $\kappa$ is defined in \eqref{kappadef},  $r=\frac{9}{2}$ if boundary condition \eqref{bc:cb} is imposed, and $r=\frac{7}{2}$ if \eqref{bc:ssc} is imposed.
\end{lemma}

\subsection{Equivalence between the HHJ element and the Morley element}
Let $\Pi_{\rm D} v$ be the linear interpolation of any function $v\in V+ \VM$. 
Given a function $f\in L^2(\Omega, \R)$. Let $u_{\rm M}^{f}\in \VM$ be the Morley solution of the source problem
\begin{equation}\label{eq:Mc}
a_M(u_{\rm M}^f, v_h)=(f,v_h),\quad \forall v_h\in \VM,
\end{equation} 
and $\widetilde{u}_{\rm M}^f\in \VM$ be the modified Morley solution of the source problem
\begin{equation}\label{eq:Mmc}
a_M(\widetilde{u}_{\rm M}^f, v_h)=(f,\Pi_{\rm D}v_h),\quad \forall v_h\in \VM.
\end{equation} 
As analyzed in \cite{arnold1985mixed}, the HHJ solution of source problem \eqref{eq:HHJ} and the modified Morley solution of source problem \eqref{eq:Mmc} are equivalent in the following sense
\begin{equation}
\sHHJf = \nabla_h^2\widetilde{u}_{\rm M}^f,\quad \uHHJf=\Pi_{\rm D}\widetilde{u}_{\rm M}^f.
\end{equation}
This equivalence leads to the following lemma.
\begin{lemma}\label{lm:equiv}
Let $(\sHHJc, \uHHJc)$, $(\sHHJ, \uHHJ)$ be the solutions of Problem \eqref{eq:HHJ} with $f=\lambda u$ and $f=\Mlam \Mu$, respectively,    $\Mumc$ and $\Mum$ be the solutions of Problem \eqref{eq:Mmc} with $f=\lambda u$ and $f=\Mlam \Mu$, respectively. It holds that
\begin{equation}\label{relation}
\sHHJc=\nabla^2_{h}\Mumc, \quad\ \uHHJc=\Pi_{\rm D}\Mumc,\qquad 
\sHHJ=\nabla^2_{h}\Mum, \quad\ \uHHJ=\Pi_{\rm D}\Mum.
\end{equation}  
\end{lemma}

%Given $v\in V+ \VM$, define the interpolation operator $\Pi_{\rm D}: V+\VM \rightarrow \UHHJ$ by
%\begin{equation*}\label{interP1con}
%  \Pi_{\rm D}v(\bp)=v(\bp)\qquad \text{ for each vertex $\bp$ of } \Tma.
%\end{equation*}
%Consider two additional problems by Morley element: one seeks $\Mumc\in \VM$ such that
%\begin{equation}\label{eq:Mmc}
%  (\nabla^2_{h}\Mumc,\nabla^2_{h}v_{h})=(\lambda  u,\Pi_{\rm D}v_{h})\quad\text{for any }v_{h}\in \VM,
%\end{equation}
%and the other one seeks $\Mum\in \VM$ such that
%\begin{equation}\label{eq:Mm}
%  (\nabla^2_{h}\Mum,\nabla^2_{h}v_{h})=(\Mlam \Mu,\Pi_{\rm D}v_{h})\quad\text{for any }v_{h}\in \VM.
%\end{equation}

\section{Optimal Analysis of Extrapolation Algorithm for the Morley element}\label{sec:CR}
%{\color{blue}  put all the details in the appendix, and just list the expansions which will be used in asymptotic expansions here}

In this section, we consider the extrapolation algorithm on the eigenvalues by the Morley element. 
An  asymptotic expansion of eigenvalues is established, which gives an optimal theoretical analysis of the extrapolation algorithm on the eigenvalue problem.

Given the approximate eigenvalues  $\Mlam^h$ and $\Mlam^{2h}$ on  $\cT_h$ and   $\cT_{2h}$, respectively. The extrapolation algorithm computes a new approximate eigenvalue by 
\begin{equation}\label{exmethod}
\lambda_{\rm M}^{\rm EXP}={2^{\alpha} \Mlam^{h}-\Mlam^{2h}\over 2^{\alpha}-1},
\end{equation}
where the convergence rate $\alpha=2$ if the eigenfunction is smooth enough.
Suppose that there exists such an asymptotic expansion of eigenvalues
\begin{equation}\label{exid}
\lambda-\Mlam^{h}=Ch^\alpha + \cO(h^{\beta}) \text{ with }\beta>\alpha,
\end{equation}
where $C$ is independent on the mesh size $h$. It is easy to verify that the extrapolation algorithm \eqref{exmethod} improves the accuracy of eigenvalues to $\mathcal{O}(h^\beta)$ if  \eqref{exid} holds.

In the rest of this section, we establish
 an expansion in the form of \eqref{exid}  with an optimal rate $\beta=4$ and $C$ is expressed explicitly by the function $u$.

\subsection{Error expansions for eigenvalues}
The classic asymptotic analysis does not work for the Morley element because of the lack of a crucial superclose property. Inspired by \cite{hu2019asymptotic}, we use the equivalence between the mixed HHJ element and the modified Morley element in Lemma \ref{lm:equiv} and the superconvergence property of the mixed HHJ element in Lemma \ref{Lm:super} to establish an asymptotic expansion of eigenvalues by the Morley element.  

%The canonical interpolation of the Morley does not admit the usual superclose property with respect to the finite element solution in the energy norm. The lack of this crucial superclose property makes it difficult to establish asymptotic expansions of eigenvalues by directly using the canonical interpolation of the nonconforming Morley element. Similar to the analysis for the enriched  Crouzeix--Raviart element in  \cite{hu2019asymptotic}, the main idea is to make use of  the equivalence between the mixed HHJ element  and the Morley element in Lemma \ref{lm:equiv} and the superconvergence property of the mixed HHJ element in Lemma \ref{Lm:super}.  

%use the canonical interpolation operator $\PiRT$ of the mixed RT element, together with the interpolation $\PiCR$ of the nonconforming CR element. The lack of superclose property of the CR element causes  the difficulty in the asymptotic expansion of eigenvalues. The key idea is to employ the interpolation of the mixed RT element and take use of the superconvergence result of this mixed element. 

To begin with, we list the discrete problems and  the corresponding solutions considered in this paper:
\begin{enumerate}
\item[(P1)] eigenvalue problem \eqref{discrete} by the Morley element: $\Mu$;

\item[(P2)] source problem \eqref{eq:Mc}  by the Morley element: $\Muc$;

\item[(P3)] source problem \eqref{eq:Mmc} by the modified Morley element: $\Mumc$ and $\Mum$;

\item[(P4)] source problem \eqref{eq:HHJ} by the HHJ element: $(\sHHJc, \uHHJc)$, $ (\sHHJ, \uHHJ)$.
\end{enumerate}
The Morley solution $\Mu$ in Problem (P1) is also a solution of Problem (P2) with $f=\Mlam\Mu$. The following Lemma \ref{supereig1} analyzes some superclose property of the Morley element and the mixed HHJ element, including the relation between Problems (P1) and (P2), and that between Problems (P2) and (P3).
Lemma \ref{lm:equiv} presents the equivalence between the solutions of Problems (P3) and (P4). Thus, the superconvergence in Lemma \ref{Lm:super} of the HHJ element for Problem (P4) can be used to analyze the expansion of eigenvalues of Problem (P1).

\begin{lemma}\label{supereig1}
Suppose that $(\lambda, u)$ is an eigenpair  of Problem \eqref{variance}, $(\sHHJc, \uHHJc)$ and $(\sHHJ, \uHHJ)$ are the solutions of Problems \eqref{eq:HHJ} with $f=\lambda u$ and $f=\Mlam \Mu$, respectively.  It holds that
\begin{equation}\label{eq:Msuper} 
|\Muc-\Mumc|_{2,h} + | \Mu-\Mum|_{2,h} + |\Muc-\Mu|_{2,h} + \parallel \sHHJc - \sHHJ \parallel_{0,\Om}\lesssim h^2\|u\|_{3,\Om}, 
\end{equation}
provided that $u\in V\cap H^3(\Om,\R)$.
\end{lemma}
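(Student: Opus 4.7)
The plan is to exploit the four weak equations \eqref{eq:Mc}, \eqref{eq:Mmc}, \eqref{eq:Mm}, \eqref{discrete}, which all use the same bilinear form $(\nabla_h^2\cdot,\nabla_h^2\cdot)$ on $\VM$ but differ only in the right-hand side functional. Each pairwise difference will be an $a_h$-orthogonality relation, so testing with the difference itself converts the superclose estimate into a bound on a right-hand-side remainder that involves the vertex interpolation operator $\Pi_{\rm D}$. The central ingredient is the elementary interpolation estimate
\begin{equation*}
\|(I-\Pi_{\rm D})v_h\|_{0,K}\lesssim h_K^2\,\|\nabla^2 v_h\|_{0,K}\quad \text{for any }v_h\in \VM,
\end{equation*}
which holds because $v_h|_K\in P_2(K,\R)$ and $(I-\Pi_{\rm D})v_h$ vanishes at the three vertices of $K$.

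First I would estimate $\|\nabla_h^2(\Muc-\Mumc)\|_{0,\Om}$. Subtracting \eqref{eq:Mmc} from \eqref{eq:Mc} and testing with $v_h=\Muc-\Mumc$ gives
\begin{equation*}
\|\nabla_h^2(\Muc-\Mumc)\|_{0,\Om}^2=\big(\lambda u,\,(I-\Pi_{\rm D})(\Muc-\Mumc)\big),
\end{equation*}
and the interpolation estimate above together with Cauchy--Schwarz yields the desired $h^2\|u\|_{3,\Om}$ bound. Next, to bound $\|\nabla_h^2(\Mu-\Mum)\|_{0,\Om}$, I would subtract \eqref{eq:Mm} from the discrete eigenvalue equation \eqref{discrete}, test with $\Mu-\Mum$, and handle the resulting $(\Mlam \Mu,(I-\Pi_{\rm D})(\Mu-\Mum))$ by splitting $\Mu=u+(\Mu-u)$ so that the $u$-part is bounded by $\|u\|_{0,\Om}\,h^2\|\nabla_h^2(\Mu-\Mum)\|_{0,\Om}$ and the $\Mu-u$ part is controlled by \eqref{M:est}.

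For the third term $\|\nabla_h^2(\Muc-\Mu)\|_{0,\Om}$ I would interpose $\Mumc$ and $\Mum$, using the previous two estimates together with an intermediate bound on $\|\nabla_h^2(\Mumc-\Mum)\|_{0,\Om}$ obtained from subtracting \eqref{eq:Mm} from \eqref{eq:Mmc}, testing with $\Mumc-\Mum$, and noticing that
\begin{equation*}
\|\lambda u-\Mlam \Mu\|_{0,\Om}\le \lambda\|u-\Mu\|_{0,\Om}+|\lambda-\Mlam|\,\|\Mu\|_{0,\Om}\lesssim h^2\|u\|_{3,\Om}
\end{equation*}
by \eqref{M:est}, combined with the $H^2$-stability $\|\Pi_{\rm D}v_h\|_{0,\Om}\lesssim \|\nabla_h^2 v_h\|_{0,\Om}$ on $\VM$. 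Finally, the norm $\|\sHHJc-\sHHJ\|_{0,\Om}$ is not extra work at all: by the equivalence Lemma~\ref{lm:equiv} it equals $\|\nabla_h^2(\Mumc-\Mum)\|_{0,\Om}$, already controlled in the previous step. The only subtle point I anticipate is verifying that the discrete Poincaré-type bound $\|\Pi_{\rm D} v_h\|_{0,\Om}\lesssim \|\nabla_h^2 v_h\|_{0,\Om}$ can be safely applied to Morley functions—this follows because $\Pi_{\rm D}v_h\in H_0^1(\Om,\R)$ and vertex values of $v_h\in\VM$ are controlled by the broken $H^2$-seminorm via a standard nonconforming trace/Poincaré argument.
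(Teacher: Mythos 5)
Your proposal is correct and rests on the same two pillars as the paper's argument: testing the difference of two discrete problems with itself, and the elementary estimate $\|(I-\Pi_{\rm D})v_h\|_{0,K}\lesssim h_K^2\|\nabla^2 v_h\|_{0,K}$ for piecewise quadratics vanishing at the vertices. The treatment of the first two terms is identical to the paper's. Where you diverge is in the last two terms: the paper bounds $\|\nabla_h^2(\Muc-\Mu)\|_{0,\Om}$ directly by invoking the analysis of Lemma~3.1 of \cite{hu2019asymptotic} (a comparison of the source problem \eqref{eq:Mc} with the eigenvalue problem \eqref{discrete}, using $\|\lambda u-\Mlam\Mu\|_{0,\Om}\lesssim h^2\|u\|_{3,\Om}$), and then obtains $\sHHJc-\sHHJ=\nabla_h^2(\Mumc-\Mum)$ by the triangle inequality through the three already-bounded differences. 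You do the reverse: you bound $\|\nabla_h^2(\Mumc-\Mum)\|_{0,\Om}$ directly from \eqref{eq:Mmc} minus \eqref{eq:Mm} — which immediately yields the fourth term via Lemma~\ref{lm:equiv} — and recover the third term by the triangle inequality. Both permutations are valid and use the same estimate $\|\lambda u-\Mlam\Mu\|_{0,\Om}\lesssim h^2\|u\|_{3,\Om}$ from \eqref{M:est}; the advantage of your ordering is that it is self-contained and avoids the external citation, at the modest cost of needing the stability $\|\Pi_{\rm D}v_h\|_{0,\Om}\lesssim\|\nabla_h^2 v_h\|_{0,\Om}$ on $\VM$, which you correctly justify by combining the interpolation estimate with the discrete Poincar\'e inequality for Morley functions.
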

\begin{proof}
To bound $\|\nabla_{h}^2(\Muc-\Mumc)\|_{0,\Omega}$, let $v_h=\Muc-\Mumc$ in \eqref{eq:Mc} and \eqref{eq:Mmc}. It holds that
$$
\|\nabla_{h}^2(\Muc-\Mumc)\|_{0,\Omega}^2 = (\lambda u, (I-\Pi_{\rm D})(\Muc-\Mumc))
\lesssim \lambda h^2 \|\nabla_{h}^2(\Muc-\Mumc)\|_{0,\Omega},
$$
which implies that 
$
\|\nabla_{h}^2(\Muc-\Mumc)\|_{0,\Omega}\lesssim h^2,
$
and completes the estimate of the first term on the left-hand side of \eqref{eq:Msuper}. A similar analysis leads to the following estimate of the second term, that is
$
\|\nabla_{h}^2(\Mu-\Mum)\|_{0,\Omega} \lesssim h^2.
$
A similar analysis to the one for Lemma  3.1 in \cite{hu2019asymptotic} gives
$
\|\nabla_{h}^2(\Muc-\Mu)\|_{0,\Omega}\lesssim h^2\|u\|_{3,\Om}
$
for the third term.
Consider the last term on the left-hand side of \eqref{eq:Msuper}. By the equivalence \eqref{relation} between the modified Morley element and the HHJ element,
$$
\sHHJc - \sHHJ = \nabla_h^2 (\Mumc - \Mum) = \nabla_h^2 (\Mumc -  \Muc + \Muc - \Mu + \Mu -\Mum).
$$
It follows that
$
\parallel \sHHJc - \sHHJ \parallel_{0,\Om}\lesssim h^2\|u\|_{3,\Om},
$
which completes the proof.
\end{proof}

For simplicity of presentation, we introduce the following notations
\begin{equation}\label{crI}
\begin{array}{llll}
I_1&=(\sigma - \sHHJc, \sHHJc - \sHHJ) ,&I_2&=(\sigma - \sHHJ, \nabla_h^2 (\Mum-  \Mu)), \\
 I_3&=\lambda (u-\PiM u, u ).&&
\end{array}
\end{equation}
The asymptotic  expansion of eigenvalues by the Morley element is
 based on the decomposition in the following theorem.
\begin{Th}\label{Th:extraCR}
Suppose that $(\lambda , u )$ is the solution of Problem \eqref{variance} with $u \in V\cap H^{\frac{9}{2}}(\Om,\mathbb{R})$, and $(\Mlam, \Mu)$ is the corresponding discrete eigenpair  of Problem \eqref{discrete} by the Morley element.   If the triangulation is uniform, it holds that
\begin{equation}\label{eq:Mdeco} 
\lambda-\Mlam=\parallel (I - \PiHHJ)\nabla^2 u  \parallel_{0,\Om}^2 
+ 2I_1 + 2I_2 - 2I_3 + \cO(h^4|\ln h||u|_{\frac{9}{2},\Om}^2),
\end{equation}    
where $I_1$, $I_2$ and $I_3$ are defined in \eqref{crI}.
\end{Th}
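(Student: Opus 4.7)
The plan is to start from the commuting-property identity \eqref{commutId}, so the task reduces to expanding $\|\nabla_h^2(u-\Mu)\|_{0,\Om}^2$ modulo terms of order $h^4|\ln h|\,|u|_{9/2,\Om}^2$ and matching the three leading contributions with $\|(I-\PiHHJ)\nabla^2 u\|_{0,\Om}^2$, $2I_1$, and $2I_2$. The strategy is a sequence of three nested Pythagorean-style splits, each introducing one of these three terms while leaving a squared remainder controlled by either Lemma \ref{supereig1} or Lemma \ref{Lm:super}.

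First, using the Morley--HHJ equivalence $\sHHJ=\nabla_h^2\Mum$ from Lemma \ref{lm:equiv}, I would rewrite
\[
\nabla^2 u - \nabla_h^2\Mu = (\sigma - \sHHJ) + \nabla_h^2(\Mum - \Mu).
\]
Expanding the squared $L^2$ norm produces $\|\sigma-\sHHJ\|_{0,\Om}^2 + 2I_2 + \|\nabla_h^2(\Mum-\Mu)\|_{0,\Om}^2$, and the last remainder is $\cO(h^4\|u\|_{3,\Om}^2)$ by Lemma \ref{supereig1}. Second, I would split $\sigma-\sHHJ=(\sigma-\sHHJc)+(\sHHJc-\sHHJ)$; the cross term is exactly $2I_1$, and Lemma \ref{supereig1} again bounds $\|\sHHJc-\sHHJ\|_{0,\Om}^2$ by $\cO(h^4\|u\|_{3,\Om}^2)$.

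Third, to extract $\|(I-\PiHHJ)\sigma\|_{0,\Om}^2$ from $\|\sigma-\sHHJc\|_{0,\Om}^2$, I would invoke the HHJ orthogonality \eqref{eq:HHJid}. Writing $\sHHJc-\sigma=(\sHHJc-\PiHHJ\sigma)-(\sigma-\PiHHJ\sigma)$ in \eqref{eq:HHJid} yields $\|\sHHJc-\PiHHJ\sigma\|_{0,\Om}^2=(\sHHJc-\PiHHJ\sigma,\,\sigma-\PiHHJ\sigma)$. Expanding $\sigma-\sHHJc=(\sigma-\PiHHJ\sigma)-(\sHHJc-\PiHHJ\sigma)$ then gives
\[
\|\sigma-\sHHJc\|_{0,\Om}^2 = \|(I-\PiHHJ)\sigma\|_{0,\Om}^2 - \|\sHHJc-\PiHHJ\sigma\|_{0,\Om}^2,
\]
and the subtracted term is $\cO(h^4|\ln h|\,|u|_{9/2,\Om}^2)$ by the superconvergence estimate \eqref{eq:super}.

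Combining the three splits, substituting into \eqref{commutId}, and noting that the $-2\lambda(u-\PiM u,u)$ term there is exactly $-2I_3$ gives the asserted decomposition. The main technical point I would watch is the bookkeeping of remainders: Lemma \ref{Lm:super} is the sole source of the $|\ln h|$ factor, while the remainders produced by Lemma \ref{supereig1} only contribute $h^4\|u\|_{3,\Om}^2\lesssim h^4|u|_{9/2,\Om}^2$, so no cross-term leakage degrades the order, and the final remainder is exactly $\cO(h^4|\ln h|\,|u|_{9/2,\Om}^2)$. The substantive work of the theorem is thus entirely absorbed into the two preceding lemmas; the proof itself is a careful algebraic accounting.
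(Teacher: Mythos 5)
Your proposal is correct and follows essentially the same route as the paper: both start from \eqref{commutId}, decompose $\nabla^2 u-\nabla_h^2\Mu$ through the same intermediate quantities $\PiHHJ\sigma$, $\sHHJc$, $\sHHJ$, $\nabla_h^2\Mum$, and invoke Lemma \ref{supereig1}, Lemma \ref{Lm:super} and the orthogonality \eqref{eq:HHJid} to control the remainders. The only difference is organizational: the paper expands the four-term sum at once and then absorbs six of the resulting cross terms into the error, whereas your three nested binary splits produce exactly $2I_1$ and $2I_2$ as the surviving cross terms and turn \eqref{eq:HHJid} into the exact identity $\parallel\sigma-\sHHJc\parallel_{0,\Om}^2=\parallel(I-\PiHHJ)\sigma\parallel_{0,\Om}^2-\parallel\sHHJc-\PiHHJ\sigma\parallel_{0,\Om}^2$, which makes the bookkeeping slightly cleaner but changes nothing substantive.
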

\begin{proof}
Recall the expansion \eqref{commutId} of eigenvalues by the Morley element  
\begin{equation}\label{eq:deco0} 
\lambda-\Mlam =\|\nabla_h^2 (u - \Mu)\|_{0, \Om}^2-2\lambda (u-\PiM u, u ) + \cO(h^4|u|_{3, \Om}^2). 
\end{equation} 
Thanks to the equivalence $\sHHJ= \nabla_h^2\Mum$ in \eqref{relation},
\ben
\nabla^2 u-\nabla_h^2 \Mu= (\nabla^2 u -\PiHHJ\nabla^2 u) + (\PiHHJ\nabla^2 u - \sHHJc) + (\sHHJc - \sHHJ) +  \nabla_h^2 (\Mum-  \Mu),
\een
with the HHJ solutions $\sHHJc$ and $\sHHJ$ of the source problem \eqref{eq:HHJ} with $f=\lambda u$ and $f=\Mlam \Mu$, respectively,  and the Morley solutions $\Mu$ and $\Mum$ of the eigenvalue problem \eqref{discrete} and the modified problem \eqref{eq:Mmc}, respectively. It holds that
\begin{equation} \label{Mtotal}
\begin{split}
\lambda-\Mlam=&\parallel (I - \PiHHJ)\sigma  \parallel_{0,\Om}^2 
+ \parallel \PiHHJ\sigma - \sHHJc \parallel_{0,\Om}^2
+ \parallel \sHHJc - \sHHJ \parallel_{0,\Om}^2 
\\
&+\parallel  \nabla_h^2 (\Mum-  \Mu)\parallel_{0,\Om}^2 
+2(\sigma -\PiHHJ\sigma, \PiHHJ\sigma - \sHHJc)
\\
&
+2(\sigma -\PiHHJ\sigma, \sHHJc - \sHHJ)
+ 2(\sigma -\PiHHJ\sigma, \nabla_h^2 (\Mum-  \Mu))
\\
&
+ 2(\PiHHJ\sigma - \sHHJc, \sHHJc - \sHHJ)
+ 2(\PiHHJ\sigma - \sHHJc, \nabla_h^2 (\Mum-  \Mu))
\\
& 
+ 2(\sHHJc - \sHHJ, \nabla_h^2 (\Mum-  \Mu))
-2\lambda (u-\PiM u, u )
+\cO(h^4|u|_{3,\Om}^2).
\end{split}
\end{equation} 
Recall the superconvergence \eqref{eq:super} of the HHJ element in Lemma \ref{Lm:super} and the superclose property \eqref{eq:Msuper} of both the Morley element and the HHJ element. Then, 
\begin{equation}\label{eq:term0}
\begin{aligned}
\parallel \PiHHJ\sigma - \sHHJc\parallel_{0,\Om}^2 
+ \parallel \sHHJc - \sHHJ \parallel_{0,\Om}^2
 + \|\nabla_{h}^2(\Mu-\widetilde{u}_{\rm M})\|_{0,\Omega}^2
&
\\
+ 2(\PiHHJ\sigma - \sHHJc, \sHHJc - \sHHJ)
+ 2(\PiHHJ\sigma - \sHHJc, \nabla_h^2 (\Mum-  \Mu))&
\\
+ 2(\sHHJc - \sHHJ, \nabla_h^2 (\Mum-  \Mu))
&\lesssim h^4 |\ln h|\| u\|_{\frac{9}{2},\Om}^2.
\end{aligned}
\end{equation} 
A combination of  \eqref{eq:HHJid} and the superconvergence property \eqref{eq:super}  of the HHJ element yields
\begin{equation}\label{eq:term1}
\big |(\sigma -\PiHHJ\sigma, \PiHHJ\sigma - \sHHJc)\big | =\big | (\sHHJc -\PiHHJ\sigma, \PiHHJ\sigma - \sHHJc)\big |\lesssim h^4|\ln h| | u|_{\frac{9}{2},\Om}^2.
\end{equation} 
A substitution of \eqref{eq:term0} and \eqref{eq:term1} into \eqref{Mtotal} leads to 
\begin{equation*} 
\lambda-\Mlam=\parallel (I - \PiHHJ)\nabla^2 u  \parallel_{0,\Om}^2 
+ 2I_1 + 2I_2 - 2I_3 + \cO(h^4|\ln h||u|_{\frac{9}{2},\Om}^2),
\end{equation*}    
which completes the proof.
\end{proof}

In the rest of this section, we will conduct an asymptotic analysis of each term on the right-hand
 side of the expansion \eqref{eq:Mdeco} in the above theorem.

\subsection{Asymptotic expansion of $\parallel (I-\PiHHJ)\nabla^2 u \parallel_{0,\Om}^2$}\label{sec:rt2}

Define the three basis functions for the HHJ element
\begin{align*}
\phiH^i(\bx)=-{1\over 2\sin\theta_{i-1}\sin\theta_{i+1}}\big (\bold{t}_{i-1}\bold{t}_{i+1}^T + \bold{t}_{i+1}\bold{t}_{i-1}^T \big ) \in P_0(K, \S),\qquad  1\le i\le 3.
\end{align*}
By \eqref{nlambda}, it is easy to verify that
\be\label{eq:HHJbase} 
{1\over |e_j|}\int_{e_j}\bn_j^T\phiH^i\bn_j \ds= \delta_{ij}.
\ee 
Define  four short-hand notations for the HHJ element
\begin{equation}\label{HHJphi}
\begin{split}
\phi_1(\bx)&={1\over 6|K|^{1/2}}(x_1-M_1)^3,\quad \phi_2(\bx)={1\over 2|K|^{1/2}}(x_1-M_1)^2(x_2-M_2)
\\
\phi_3(\bx)&={1\over 2|K|^{1/2}}(x_1-M_1)(x_2-M_2)^2,\quad \phi_4(\bx)={1\over 6|K|^{1/2}}(x_2-M_2)^3.
\end{split}
\end{equation}
Note that $\{\phi_i\}_{i=1}^4$ are linear independent and 
\begin{equation} \label{p3deco}
P_3(K, \R)=P_2(K, \R)\cup {\rm span}\{\phi_i: 1\le i\le 4\},
\end{equation}
\begin{equation} \label{phidelta}
\|\partial_{111}\phi_i\|_{0, K}=\delta_{1i},\ \|\partial_{112}\phi_i\|_{0, K}=\delta_{2i},\ \|\partial_{122}\phi_i\|_{0, K}=\delta_{3i}, \ \|\partial_{222}\phi_i\|_{0, K}=\delta_{4i}. 
\end{equation}
Define
\begin{equation}\label{RTcdef}
\HHJcro^{ij} = \frac{1}{|K|}\int_K \big((I-\PiHHJ)\nabla^2\phi_i\big )^T(I-\PiHHJ)\nabla^2\phi_j\dx,\quad 1\le i, j\le 4.
\end{equation}
\begin{lemma}\label{lm:gammaconstant}
Constants $\HHJcro^{ij} $ in \eqref{RTcdef} are the same on different elements of a uniform triangulation and  independent of the mesh size $h$.
\end{lemma}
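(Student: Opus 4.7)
The plan is to establish three invariances of $\HHJcro^{ij}$ under natural operations on the element $K$: translation, $180^\circ$ rotation, and isotropic scaling. Since any two triangles of a uniform triangulation $\cT_h$ are congruent via some composition of translations and $180^\circ$ rotations---adjacent pairs form parallelograms by assumption, and this relation propagates through the whole mesh---constancy of $\HHJcro^{ij}$ across elements follows, while scale invariance yields independence of the mesh size $h$, since a finer uniform mesh is a rescaling of a coarser one.

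First I would reduce to a centroid-centered configuration via $\by = \bx - M_K$. In these coordinates the basis cubics in \eqref{HHJphi} take the form $\phi_i(\bx) = |K|^{-1/2}p_i(\by)$, with $p_i$ independent of $K$, so that $\nabla^2 \phi_i(\bx) = |K|^{-1/2} H_i(\by)$ for a $K$-independent matrix polynomial $H_i$. This already handles translation invariance, since edges, normals, $|K|$, and the edge averages defining $\PiHHJ$ transform trivially.

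The central step is to verify, for $K' = -K + \bc$, the transformation rule
\begin{equation*}
(I - \PiHHJ)\nabla^2 \phi_i^{K'}(\bx) = -(I - \PiHHJ)\nabla^2 \phi_i^{K}(-\bx + \bc).
\end{equation*}
Each $\phi_i^K$ is a homogeneous cubic centered at $M_K$, so direct substitution gives $\phi_i^{K'}(\bx) = -\phi_i^K(-\bx + \bc)$ and, by the chain rule with $A = -I$, $\nabla^2 \phi_i^{K'}(\bx) = -\nabla^2 \phi_i^K(-\bx + \bc)$. For the projection: each edge $e'$ of $K'$ corresponds bijectively to an edge $e$ of $K$ with $\bn_{e'} = \pm \bn_e$ and $|e'| = |e|$; since $\bn^T M \bn$ is invariant under $\bn \mapsto -\bn$, the defining identity \eqref{def:fortin} forces $\PiHHJ\nabla^2\phi_i^{K'} = -\PiHHJ\nabla^2\phi_i^K$ as constant symmetric matrices. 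Substituting into \eqref{RTcdef} and changing variables $\bx = -\by + \bc$ (with unit Jacobian), the two sign changes square away and $\HHJcro^{ij}(K') = \HHJcro^{ij}(K)$.

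Finally, for the scaling $K' = hK$ (so $M_{K'} = h M_K$, $|K'| = h^2 |K|$), a similar computation gives $\nabla^2 \phi_i^{K'}(\bx) = \nabla^2 \phi_i^K(\bx/h)$; the edge-average condition \eqref{def:fortin} is scale-invariant because $|e'| = h|e|$ and $\ds_\bx = h\,\ds_\by$ balance, yielding $\PiHHJ \nabla^2\phi_i^{K'} = \PiHHJ \nabla^2\phi_i^K$ as constant matrices. The Jacobian $h^2$ in \eqref{RTcdef} then cancels $|K'|/|K| = h^2$, leaving $\HHJcro^{ij}$ unchanged. The main obstacle is the second step: since \eqref{def:fortin} is imposed edge-by-edge, one must track carefully how the sign flip in $\nabla^2 \phi_i$ interacts with the sign ambiguity of the outward normals to yield the claimed covariance of $\PiHHJ$ under $180^\circ$ rotation.
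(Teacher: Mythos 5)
Your proof is correct, but it takes a genuinely different route from the paper's. The paper argues by explicit expansion: writing $\PiHHJ\nabla^2\phi_i=\sum_k a_{\rm HHJ}^{ik}\phiH^k$ with $a_{\rm HHJ}^{ik}$ the edge averages dual to the basis \eqref{eq:HHJbase}, it expands $\HHJcro^{ij}$ into the three groups of terms in \eqref{eq:gammadeco}, kills the cross terms $(\phiH^{k},\nabla^2\phi_j)_{0,K}$ by observing that every entry of $\nabla^2\phi_j$ is a linear combination of $x_1-M_1$ and $x_2-M_2$ and hence has zero mean over $K$, and then checks that each surviving normalized quantity is the same constant on every element and independent of $h$. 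You never expand: you show that the whole field $(I-\PiHHJ)\nabla^2\phi_i$ is invariant under translation and isotropic scaling and acquires a single global sign under the point reflection relating the two congruence classes of a uniform mesh, so the quadratic form \eqref{RTcdef} is invariant under all three operations; that suffices because any two elements of (any refinement of) a uniform triangulation are related by a composition of these maps. Your route buys cleaner sign bookkeeping: the paper's intermediate claim that the $a_{\rm HHJ}^{ij}$ are the same on different elements is literally true only up to the sign flip between up- and down-triangles (harmless there, since after the cross terms vanish they enter only through the products $a_{\rm HHJ}^{ik}a_{\rm HHJ}^{jl}$), whereas in your argument the sign is carried once and squares away. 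The paper's route, in exchange, yields the explicit reduced formula \eqref{eq:gammadeco}, which makes the constants directly computable. The one ingredient you use implicitly and should state is the unisolvence of the three edge-average degrees of freedom on $P_0(K,\S)$ (equivalently, that $\{\phiH^j\}$ is a dual basis as in \eqref{eq:HHJbase}); this is what lets you pass from equality of the transformed edge data to $\PiHHJ\nabla^2\phi_i^{K'}=-\PiHHJ\nabla^2\phi_i^{K}$ as constant matrices, and it is already part of the paper's setup.
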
 
\begin{proof}
By the definition of $\PiHHJ$ and \eqref{eq:HHJbase},
\begin{equation}\label{HHJbasis}
\PiHHJ \nabla^2\phi_i = \sum_{j=1}^3 a_{\rm HHJ}^{ij}\phiH^j\quad \mbox{ with }\quad a_{\rm HHJ}^{ij} = {1\over |e_j|}\int_{e_j} \bn_j^T\nabla^2\phi_i \bn_j \ds.
\end{equation}
Since $\nabla^2 \phi_1\in P_1(K, \S)$,
$$
a_{\rm HHJ}^{1j} = \bn_j^T\nabla^2\phi_1(\bM_j) \bn_j 
= {1\over |K|^{1/2}}\bn_j^T\begin{pmatrix}
\bM_{j1} - M_1&0\\
0&0
\end{pmatrix} \bn_j 
$$ 
is constant independent of the mesh size $h$, where $\bM_j=(\bM_{j1}, \bM_{j2})$ is the midpoint of edge $e_j$. Similarly, for any $1\le i\le 4$ and $1\le j\le 3$, constant $a_{\rm HHJ}^{ij}$ is independent on the mesh size $h$. It follows from \eqref{RTcdef} and \eqref{HHJbasis} that
\begin{equation}\label{eq:gammadeco}
\begin{split}
\HHJcro^{ij} =& {1\over |K|} (\nabla^2\phi_i, \nabla^2\phi_j)_{0, K} - {1\over |K|} \sum_{k=1}^3 (\phiH^{k}, a_{\rm HHJ}^{ik}\nabla^2\phi_j + a_{\rm HHJ}^{jk} \nabla^2\phi_i)_{0,K} 
\\
& + {1\over |K|} \sum_{k, l=1}^3 a_{\rm HHJ}^{ik}a_{\rm HHJ}^{jl}(\phiH^{k}, \phiH^{l})_{0,K}.
\end{split}
\end{equation}
By the definition of $\phi_i\in P_3(K,\R)$ and $\phiH^i\in P_0(K,\R)$, each entry of $\nabla^2 \phi_i$  is a linear combination of $x_1-M_1$ and $x_2-M_2$. Thus,
$
(\phiH^{k}, \nabla^2\phi_i)_{0,K}=0
$
for any $1\le i\le 4$, $1\le k\le 3$.
For any $1\le i, j\le 4$, both
$
{1\over |K|}(\nabla^2\phi_i, \nabla^2\phi_j)_{0, K}$ and $ {1\over |K|}(\phiH^{i}, \phiH^{j})_{0,K}
$
on uniform triangulations
are constant independent of $h$. It follows \eqref{eq:gammadeco}   that  $\HHJcro^{ij}$ in \eqref{RTcdef} are the same on different elements and independent of  $h$.
\end{proof}

For any region $G$, define
\begin{equation} \label{def:F}
\begin{split}
F(u, G) & = \HHJcro^{11}\|\partial_{111}u\|_{0, G}^2 + \HHJcro^{22}\|\partial_{112}u\|_{0, G}^2 + \HHJcro^{33}\|\partial_{122}u\|_{0, G}^2 + \HHJcro^{44}\|\partial_{222}u\|_{0, G}^2
\\
 + &2\HHJcro^{12}\int_G \partial_{111}u\partial_{112}u\dx +2 \HHJcro^{13} \int_G \partial_{111}u\partial_{122}u\dx+ 2\HHJcro^{14} \int_G \partial_{111}u\partial_{222}u\dx
\\
 + & 2\HHJcro^{23}\int_G \partial_{112}u\partial_{122}u\dx +2 \HHJcro^{24}\int_G \partial_{112}u\partial_{222}u\dx +2\HHJcro^{34}\int_G \partial_{122}u\partial_{222}u\dx.
\end{split}
\end{equation}

The following lemma presents the Taylor expansion of the interpolation error  of the HHJ element.
\begin{lemma}\label{RT2h40}
For any $w\in P_3(K, \R)$,
\begin{equation}\label{identity:RT2} 
\parallel (I-\PiHHJ)\nabla^2 w\parallel_{0,K}^2=F(w, K) |K|.
\end{equation} 
\end{lemma}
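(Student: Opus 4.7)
The plan is to reduce the statement to a direct computation using the basis representation \eqref{p3deco} of $P_3(K, \R)$, the normalization \eqref{phidelta}, and the definition \eqref{RTcdef} of the constants $\HHJcro^{ij}$.

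First, I would use \eqref{p3deco} to decompose $w = w_2 + \sum_{i=1}^4 c_i \phi_i$ with $w_2 \in P_2(K, \R)$ and scalars $c_i$. Since $\nabla^2 w_2$ is a constant symmetric matrix, it lies in $\SHHJ$ restricted to $K$, and \eqref{def:fortin} then gives $\PiHHJ \nabla^2 w_2 = \nabla^2 w_2$. By linearity,
\[
(I - \PiHHJ)\nabla^2 w = \sum_{i=1}^4 c_i (I - \PiHHJ)\nabla^2 \phi_i.
\]

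Next, I would identify the coefficients $c_i$ in terms of third derivatives of $w$. Because each $\phi_i$ defined in \eqref{HHJphi} is a cubic monomial in $(x_1 - M_1, x_2 - M_2)$ normalized so that exactly one third derivative is nonzero and equals $|K|^{-1/2}$ (as recorded in \eqref{phidelta}), and because $w_2 \in P_2(K, \R)$ contributes nothing to third derivatives, one obtains
\[
c_1 = |K|^{1/2}\partial_{111} w,\quad c_2 = |K|^{1/2}\partial_{112} w,\quad c_3 = |K|^{1/2}\partial_{122} w,\quad c_4 = |K|^{1/2}\partial_{222} w,
\]
each of which is constant on $K$.

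Then, expanding the squared norm by bilinearity of the Frobenius product together with the definition \eqref{RTcdef} gives
\[
\|(I - \PiHHJ)\nabla^2 w\|_{0, K}^2 = \sum_{i,j=1}^4 c_i c_j\, \HHJcro^{ij}\, |K|.
\]
Writing the pointwise constants as $c_i c_j = |K|\,(\partial^{\alpha_i} w)(\partial^{\alpha_j} w) = \int_K (\partial^{\alpha_i} w)(\partial^{\alpha_j} w)\,dx$, collecting the diagonal contributions as $\|\partial^{\alpha_i} w\|_{0,K}^2$, and pairing the symmetric off-diagonal terms using $\HHJcro^{ij} = \HHJcro^{ji}$ (clear from \eqref{RTcdef}) to produce the factor-of-$2$ cross integrals, the right-hand side matches $F(w, K)\,|K|$ as defined in \eqref{def:F}, yielding the identity.

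The main point to verify carefully is the step $\PiHHJ \nabla^2 w_2 = \nabla^2 w_2$, which is immediate from \eqref{def:fortin} since constants in $\SHHJ$ are fixed by the edge averaging of $M_{\bn\bn}(\cdot)$; the remainder of the proof is bookkeeping in the symmetrization of the quadratic form, with no substantive analytical difficulty.
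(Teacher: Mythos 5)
Your proof is correct and follows essentially the same route as the paper's: decompose $w$ via \eqref{p3deco}, identify the coefficients as $|K|^{1/2}$ times the (constant) third derivatives using \eqref{phidelta}, and expand the quadratic form through the definition \eqref{RTcdef} of $\HHJcro^{ij}$. You merely make explicit two steps the paper leaves implicit, namely that $\PiHHJ$ reproduces the constant Hessian of the $P_2$ part and the bookkeeping that converts $a_ia_j|K|$ into the integrals appearing in $F(w,K)$.
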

\begin{proof}
For any $w\in P_3(K, \R)$, it follows from \eqref{p3deco} and \eqref{phidelta} that there exist $p_2\in P_2(K, \R)$ and constants $\{a_i\}_{i=1}^4$ such that 
\begin{equation}\label{expan:rt0}
w = \sum_{i=1}^4 a_i\phi_i + p_2,\quad  (I-\PiHHJ)\nabla^2 w=\sum_{i=1}^4a_i(I-\PiHHJ)\nabla^2\phi_i,
\end{equation} 
where 
\begin{equation}\label{expan:coeff}
a_1=|K|^{1/2}\partial_{111} w , \ a_2=|K|^{1/2}\partial_{112} w, \ a_3=|K|^{1/2}\partial_{122} w, \ a_4=|K|^{1/2}\partial_{222} w. 
\end{equation} 
A substitution of \eqref{RTcdef} to \eqref{expan:rt0} leads to 
\begin{equation}\label{identity:RT20} 
\parallel (I-\PiHHJ)\nabla^2 w\parallel_{0,K}^2= \sum_{i,j=1}^4a_ia_j\HHJcro^{ij} |K|=F(w, K) |K|, 
\end{equation}
which completes the proof.
\end{proof}

Lemma \ref{RT2h40} indicates that $F(u,\Omega)|K|$ is an expansion of $\| (I-\PiHHJ)\nabla^2 u\|_{0, \Om}^2$ with accuracy $\mathcal{O}(h^3)$. Next we improve this estimate  to an optimal rate $\mathcal{O}(h^4)$.
Define the interpolation $\Pik_K v \in P_l(K, \R)$ in \cite{hu2012lower} for each positive integer $l$ by
\begin{equation}\label{def:pik}
\int_K D^\alpha \Pik_K v\dx = \int_K D^\alpha v\dx\quad \mbox{ with }\quad |\alpha|\le l,
\end{equation}
Let $\Pi_h^lv|_K = \Pik_Kv$. There exists the following  error estimate of the interpolation error 
\begin{equation}\label{interr}
|(I-\Pik_K) v|_{m, K}\lesssim h^{l -m+1} |v|_{l + 1, K}, \quad \forall\ 0 \leq m\leq l+1.
\end{equation}
Note that
\begin{equation}\label{orthexpan}
\begin{aligned}
\|(I-\PiHHJ)\nabla^2 u\|_0^2%&=((I-\PiHHJ)\nabla_{h}(I-\Pi^3_h+\Pi^3_h)u,(I-\PiHHJ)\nabla_{h}(I-\Pi_h^3+\Pi_h^3)u)\\
&=\|(I-\PiHHJ)\nabla_h^2\Pi^3_hu\|_0^2+\|(I-\PiHHJ)\nabla_{h}^2(I-\Pi^3_h)u\|^2_0 \\
&
+2((I-\PiHHJ)\nabla_{h}^2(I-\Pi^3_h)u,(I-\PiHHJ)\nabla_{h}^2\Pi_h^3u),
\end{aligned}
\end{equation}
where the second term on the right-hand
 side is a higher order term.
The key to analyze $\|(I-\PiHHJ)\nabla u\|_0^2$ is to prove a nearly orthogonal property of  
$((I-\PiHHJ)\nabla_{h}^2(I-\Pi^3_h)u,(I-\PiHHJ)\nabla_{h}^2\Pi_h^3u)$.
To this end, define a set of polynomials
\begin{equation}\label{eq:phialpha}
\begin{split}
&\phi_{(0,0)}=1,\quad \phi_{(1,0)}=x_1-M_1, \quad \phi_{(0,1)}=x_2 - M_2,
\\
&\phi_\alpha={1\over \alpha !}(x - M_K)^\alpha - \sum_{|\beta|\le |\alpha|-2} C_\alpha^\beta\phi_\beta, \quad \mbox{for}\quad  |\alpha|\ge 2.
\end{split}
\end{equation}
with constant 
\be\label{eq:Cdef}
C_{\alpha}^\beta={1\over \alpha !|K|}\int_K D^\beta (x- M_K)^\alpha\dx.
\ee
For any $|\alpha|=k$,  $\phi_\alpha\in P_{k}(K, \R)$  and it is the first term ${1\over \alpha !}(x - M_K)^\alpha$ that determines the $k$-th derivatives of $\phi_\alpha$.

The explicit expression of the interpolation $\Pik_K u$ in the following lemma admits an important property that all $\phi_\beta$ with $|\beta|=|\alpha|-1$ vanish
 in $\phi_\alpha$.
\begin{lemma}
For any nonnegative integer $l$ and $u\in H^l(K, \R)$, 
\begin{equation}\label{eq:pil}
\Pik_K u=\sum_{|\alpha|\le l} a_K^\alpha \phi_\alpha,\quad \mbox{ with }\quad
a_K^\alpha={1\over |K|}\int_K  D^\alpha u\dx.
\end{equation}  
Moreover,
\begin{equation}\label{eq:C0}
(I- \Pik_K)\Pi_K^{l+1} u=\sum_{|\alpha|= l+1} a_K^\alpha \phi_\alpha.
\end{equation} 
%\be\label{eq:C0}
%C_\alpha^\beta=0\quad \mbox{ if }\quad |\alpha|=|\beta| + 1.
%\ee
\end{lemma}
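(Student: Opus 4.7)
The plan is to prove a biorthogonality relation
\begin{equation*}
\frac{1}{|K|}\int_K D^\gamma \phi_\alpha \dx = \delta_{\alpha\gamma}
\end{equation*}
for all multi-indices $\alpha$ and $\gamma$. Together with the unisolvence of the moment functionals $\{v\mapsto \int_K D^\alpha v\dx\}_{|\alpha|\le l}$ on $P_l(K,\R)$, this yields both \eqref{eq:pil} and \eqref{eq:C0}.

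First I would establish the biorthogonality by induction on $k=|\alpha|$. The cases $k=0,1$ are immediate from \eqref{eq:phialpha}, using that $M_K$ is the centroid so $\int_K (x_i-M_i)\dx=0$. For the inductive step at level $k\ge 2$, I would use the defining relation
\begin{equation*}
\phi_\alpha = \frac{1}{\alpha!}(x-M_K)^\alpha - \sum_{|\beta|<k} C_\alpha^\beta \phi_\beta.
\end{equation*}
If $|\gamma|>k$, then $D^\gamma\phi_\alpha=0$ since $\phi_\alpha\in P_k(K,\R)$. If $|\gamma|=k$, the lower-order $\phi_\beta$ terms are killed by $D^\gamma$ and a direct computation shows $D^\gamma\bigl[\frac{1}{\alpha!}(x-M_K)^\alpha\bigr] = \delta_{\alpha\gamma}$ as a constant. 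If $|\gamma|<k$, the definition \eqref{eq:Cdef} reads $C_\alpha^\gamma = \frac{1}{|K|}\int_K D^\gamma\bigl[\frac{1}{\alpha!}(x-M_K)^\alpha\bigr]\dx$, and applying the inductive hypothesis to all $|\beta|<k$ gives
\begin{equation*}
\frac{1}{|K|}\int_K D^\gamma \phi_\alpha \dx = C_\alpha^\gamma - \sum_{|\beta|<k} C_\alpha^\beta \delta_{\beta\gamma} = C_\alpha^\gamma - C_\alpha^\gamma = 0.
\end{equation*}

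Once biorthogonality is in hand, \eqref{eq:pil} follows by direct verification: setting $\widetilde u := \sum_{|\alpha|\le l} a_K^\alpha \phi_\alpha \in P_l(K,\R)$, for any $|\beta|\le l$,
\begin{equation*}
\int_K D^\beta \widetilde u\dx = \sum_{|\alpha|\le l} a_K^\alpha \cdot |K|\,\delta_{\alpha\beta} = |K|\, a_K^\beta = \int_K D^\beta u\dx.
\end{equation*}
The moment functionals on $P_l(K,\R)$ are unisolvent by a descending argument: if all moments vanish, then $D^\alpha v$ is constant (hence zero) for $|\alpha|=l$, forcing $v\in P_{l-1}(K,\R)$, and the argument repeats down to $v=0$. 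This forces $\widetilde u = \Pik_K u$.

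For \eqref{eq:C0}, the key additional observation is that $C_\alpha^\beta=0$ whenever $|\beta|=|\alpha|-1$. Indeed, $D^\beta (x-M_K)^\alpha$ is then a polynomial of total degree one in $(x-M_K)$, whose integral over $K$ vanishes by the centroid property. Substituting the definition \eqref{eq:phialpha} of $\phi_\alpha$ only for the top-level indices $|\alpha|=l$ in the expansion \eqref{eq:pil}, and using the vanishing of $C_\alpha^\beta$ for $|\beta|=l-1$, recovers \eqref{eq:C0}. The main bookkeeping obstacle is the case split in the induction, but nothing conceptually deeper than the centroid identity is required.
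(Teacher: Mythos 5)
Your proposal is correct and follows essentially the same route as the paper: an induction on $|\alpha|$ establishing the biorthogonality $\frac{1}{|K|}\int_K D^\gamma\phi_\alpha\,dx=\delta_{\alpha\gamma}$ (with the same case split on $|\gamma|$ versus $|\alpha|$), followed by the observation that $D^\beta(x-M_K)^\alpha$ is a linear combination of $x_1-M_1$ and $x_2-M_2$ when $|\beta|=|\alpha|-1$, so the centroid identity forces $C_\alpha^\beta=0$. Your explicit unisolvence argument for the moment functionals is a slightly more detailed justification of the step the paper states as immediate, but it is the same idea.
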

\begin{proof} 
First we prove that basis functions $\phi_\alpha$ in \eqref{eq:phialpha} satisfy that
\be\label{eq:huinterpro}
{1\over |K|}\int_K D^\gamma \phi_\alpha \dx = \delta_{\alpha \gamma}:=\begin{cases}
1&\alpha = \gamma\\
0&\alpha \neq \gamma
\end{cases}
\ee 
by induction. 
It is obvious that $\phi_\alpha$ with $|\alpha|\le 1$ satisfies \eqref{eq:huinterpro}. Suppose that  \eqref{eq:huinterpro} holds for any $\phi_\alpha$ with $|\alpha|\le k$, consider $\phi_\alpha$ with $|\alpha|=k+1$. 

If $|\gamma|\le k-1$, a combination of \eqref{eq:phialpha}, \eqref{eq:Cdef} and \eqref{eq:huinterpro} gives
\begin{equation}\label{eq:alpha1} 
{1\over |K|}\int_K D^\gamma \phi_\alpha \dx 
 = { 1\over \alpha !|K|}\int_K D^\gamma (x - M_K)^\alpha \dx -  C_\alpha^\gamma =0.
\end{equation}
If $|\gamma|=k$, since $D^\gamma (x - M_K)^\alpha$ is a linear combination of $x_1-M_1$ and $x_2 - M_2$,
\begin{equation}\label{eq:alpha2} 
{1\over |K|}\int_K D^\gamma \phi_\alpha \dx 
 = { 1\over \alpha !|K|}\int_K D^\gamma (x - M_K)^\alpha \dx=0.
\end{equation}
If $|\gamma|=k+1$ and $\gamma\neq \alpha$, there must exist $i\in \{1, 2\}$ such that 
$
\gamma_i>\alpha_i,
$
which implies that
$
\partial_i^{\gamma_i} (x_i-M_i)^{\alpha_i}=0.
$
Consequently,
\be\label{eq:betagamma1}
D^\gamma \phi_\alpha = \partial_1^{\gamma_1} (x_1-M_1)^{\alpha_1}\partial_2^{\gamma_2} (x_2-M_2)^{\alpha_2}=0.
\ee
Since $\phi_\beta\in P_{|\beta|}(K, \R)$, 
\be\label{eq:betagamma2}
D^\gamma \phi_\beta = 0, \quad \mbox{ if }\quad |\gamma|>|\beta|.
\ee
If $|\gamma|=k+1$ and $\gamma\neq \alpha$, a combination of \eqref{eq:phialpha}, \eqref{eq:betagamma1} and \eqref{eq:betagamma2} gives
$\int_K D^\gamma \phi_\alpha \dx = 0$. 
If $\gamma=\alpha$, by the definition \eqref{eq:phialpha}, a direct computation yields
$
{1\over |K|}\int_K D^\gamma \phi_\alpha \dx = 1.
$ 
Since $\phi_\alpha\in P_{k+1}(K, \R)$, it is trivial that $D^\gamma \phi_\alpha=0$ for any $|\gamma|>k+1$. A combination of all the results above leads to \eqref{eq:huinterpro} for $|\alpha|=k+1$, which completes the proof
 for \eqref{eq:huinterpro}.
A combination of the definition of $\Pik_K$ in \eqref{def:pik} and \eqref{eq:huinterpro} gives \eqref{eq:pil} directly.

By \eqref{eq:pil},
$
\displaystyle \Pi_K^{l+1} u=\sum_{|\alpha|= l+1} a_K^\alpha \phi_\alpha + \sum_{|\alpha|< l+1} a_K^\alpha \phi_\alpha.
$
Since $\phi_\alpha\in P_{|\alpha|}(K,\R)$,
\begin{equation}\label{add1}
(I- \Pik_K)\Pi_K^{l+1} u=\sum_{|\alpha|= l+1} a_K^\alpha (I- \Pik_K)\phi_\alpha.
\end{equation} 
It follows from \eqref{def:pik} and \eqref{eq:huinterpro} that
\begin{equation}\label{add2}
\Pik_K \phi_\alpha = 0,\quad \forall |\alpha|=l+1.
\end{equation}
A combination of \eqref{add1} and \eqref{add2} gives 
$$
(I- \Pik_K)\Pi_K^{l+1} u=\sum_{|\alpha|= l+1} a_K^\alpha \phi_\alpha,
$$
which completes the proof for \eqref{eq:C0}.
\end{proof}

\begin{lemma}\label{Lm:m} 
It holds on uniform triangulations that
$$
\big |((I-\PiHHJ)\nabla_h^2 \Pit_h u, (I-\PiHHJ)\nabla_h^2 (I - \Pit_h)u)\big |\lesssim h^4\|u\|_{5, \Om}^2,
$$
provided $u\in H^5(\Om, \R)$.
\end{lemma}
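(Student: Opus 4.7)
The plan is to go beyond a direct Cauchy--Schwarz bound, which only yields $O(h^3)$: from $\|(I-\PiHHJ)\nabla_h^2\Pit_h u\|_{0,\Om}\lesssim h\|u\|_{3,\Om}$ and $\|(I-\PiHHJ)\nabla_h^2(I-\Pit_h)u\|_{0,\Om}\lesssim h^2\|u\|_{4,\Om}$, the product is only $h^3$, so the extra factor of $h$ must come from the uniformity of the mesh (adjacent triangles forming parallelograms). Three ingredients are available: (i) the vanishing $C_\alpha^\beta=0$ for $|\beta|=|\alpha|-1$ established after \eqref{eq:C0}, which makes the expansion of $\Pit_h u$ clean at the cubic level; (ii) the zero-moment property $\int_K D^\alpha(I-\Pit_h)u=0$ for $|\alpha|\le 3$; (iii) the point-reflection symmetry between the two halves of a parallelogram.

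First I would reduce $(I-\PiHHJ)\nabla^2\Pit_h u$ to its essential content. Because $\nabla^2$ kills polynomials of degree $\le 1$ and $(I-\PiHHJ)$ kills $P_0(K,\S)$, the representation \eqref{eq:C0} with $l=3$ collapses, thanks to $C_\alpha^\beta=0$ at $|\beta|=|\alpha|-1$, to
\begin{equation*}
(I-\PiHHJ)\nabla^2\Pit_h u\big|_K=\sum_{|\alpha|=3}a_K^\alpha(I-\PiHHJ)\nabla^2\Big(\tfrac{(x-M_K)^\alpha}{\alpha!}\Big),\qquad a_K^\alpha=\tfrac{1}{|K|}\!\int_K D^\alpha u\,dx.
\end{equation*}
I would then replace the cell average $a_K^\alpha$ by the centroid value $\mu_K^\alpha:=D^\alpha u(M_K)$. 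Taylor-expanding $D^\alpha u$ at $M_K$ together with the centroid identity $\int_K(x-M_K)\,dx=0$ yields $|a_K^\alpha-\mu_K^\alpha|\lesssim h^2\|u\|_{5,\infty,K}$, so the induced perturbation of $(I-\PiHHJ)\nabla^2\Pit_h u$ has $L^2$-norm $\lesssim h^3\|u\|_{5,\Om}$; its inner product with the $O(h^2)$ factor is bounded by $h^5\|u\|_{5,\Om}^2$ and is therefore harmless.

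For the leading term
\begin{equation*}
T:=\sum_K\sum_{|\alpha|=3}\mu_K^\alpha\int_K A_\alpha^K:B\,dx,\quad A_\alpha^K:=(I-\PiHHJ)\nabla^2\Big(\tfrac{(x-M_K)^\alpha}{\alpha!}\Big),\ B:=(I-\PiHHJ)\nabla_h^2(I-\Pit_h)u,
\end{equation*}
I would integrate by parts twice on each $K$. Since $A_\alpha^K\in P_1(K,\S)$ is affine, $\nabla\!\cdot\!\nabla\!\cdot\! A_\alpha^K=0$; and since $\int_K\nabla(I-\Pit_h)u=0$ and $\int_K(I-\Pit_h)u=0$ by the zero moments, the volume pieces drop out and only edge terms of the form $\int_e(A_\alpha^K n)\cdot\nabla(I-\Pit_h)u\,ds$ and $\int_e(\nabla\!\cdot\! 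A_\alpha^K)\cdot n(I-\Pit_h)u\,ds$ survive, together with the analogous edge contributions coming from $\PiHHJ B$. Grouping these edges by parallelograms, I would exploit the reflection $x\mapsto 2O-x$ through the parallelogram centre $O$: it exchanges $M_K\leftrightarrow M_{K'}$ and, because $|\alpha|=3$ is odd, maps $A_\alpha^K$ to $-A_\alpha^{K'}\circ(2O-\cdot)$ up to the geometric correspondence. Writing $\mu_{K'}^\alpha=\mu_K^\alpha+\nabla D^\alpha u(M_K)\cdot(M_{K'}-M_K)+O(h^2)$, the $O(1)$ pieces of $\mu_K^\alpha$ produce contributions from $K$ and $K'$ that cancel under the antisymmetry; the surviving linear-in-$h$ Taylor correction, multiplied by edge jump factors of size $\lesssim h^2\|u\|_{4,K\cup K'}$, sums to $|T|\lesssim h^4\|u\|_{5,\Om}^2$.

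The hardest step will be the last one: making the parallelogram cancellation rigorous requires tracking how $A_\alpha^K$ and its normal traces transform under the point reflection, and matching the size of $\nabla D^\alpha u\cdot(M_{K'}-M_K)$ against the edge jumps of $\nabla(I-\Pit_h)u$; it is precisely the combination of the centroid identity, the vanishing of $C_\alpha^\beta$ for $|\beta|=|\alpha|-1$, and the zero moments of $(I-\Pit_h)u$ through order $3$ that produces the extra power of $h$ beyond what Cauchy--Schwarz alone delivers.
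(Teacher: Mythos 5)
Your first half coincides with the paper's argument: you use the vanishing of $C_\alpha^\beta$ for $|\beta|=|\alpha|-1$ to write $(I-\PiHHJ)\nabla^2\Pit_K u$ as a combination of $(I-\PiHHJ)\nabla^2(x-M_K)^\alpha/\alpha!$ with $|\alpha|=3$, and you plan to extract the extra power of $h$ from the odd symmetry of these homogeneous terms under the point reflection of a parallelogram, combined with a Bramble--Hilbert bound on the difference of coefficients between the two triangles. That is exactly the mechanism in the paper (the replacement of the cell average $a_K^\alpha$ by the centroid value $D^\alpha u(M_K)$ is unnecessary and forces you into $W^{5,\infty}$ norms; the paper keeps the averages and uses $L^2$-based Bramble--Hilbert throughout).

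The gap is in your treatment of the second factor. The paper's essential second move is to replace $(I-\Pit_K)u$ by $(I-\Pit_K)\Pif_K u$ (an $\cO(h^4)$-accurate substitution by \eqref{interr}) and then invoke \eqref{eq:C0} with $l=4$: since $C_\alpha^\beta=0$ for $|\beta|=3$, the function $(I-\Pit_K)\Pif_K u$ contains \emph{no homogeneous cubic terms}, so $(I-\PiHHJ)\nabla^2(I-\Pit_K)\Pif_K u$ is an explicit combination of $(I-\PiHHJ)\nabla^2(x-M_K)^\alpha$ with $|\alpha|=4$. The cross term on each element then reduces to explicit constants $c_K^{\alpha\beta}=\bigl((I-\PiHHJ)\nabla^2(x-M_K)^\beta,(I-\PiHHJ)\nabla^2(x-M_K)^\alpha\bigr)_{0,K}$, which are products of degree-$1$ and degree-$2$ homogeneous polynomials in $x-M_K$ and hence exactly antisymmetric between the two halves of a parallelogram --- no integration by parts is needed. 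Your route keeps the second factor $B=(I-\PiHHJ)\nabla_h^2(I-\Pit_h)u$ implicit, and the cancellation you assert between $K$ and $K'$ then has no foundation: for a general $u\in H^5$ there is no control on how the edge traces of $\nabla(I-\Pit_K)u$ transform under the reflection, so the ``$O(1)$ pieces cancel under the antisymmetry'' step cannot be carried out without first localizing $u$ to a polynomial as the paper does. Moreover, the piece $\int_K A_\alpha^K:\PiHHJ\nabla^2(I-\Pit_h)u\,dx$ does not disappear in your integration by parts ($\PiHHJ\nabla^2(I-\Pit_h)u$ is a nonzero constant tensor and $\int_K A_\alpha^K\dx\neq 0$ in general, since only $\int_K\nabla^2\phi\dx$ vanishes, not $\int_K\PiHHJ\nabla^2\phi\dx$); a direct estimate of it gives only $\cO(h^3)$, and you do not address it. The step you yourself flag as ``the hardest'' is precisely the one that remains unproved; inserting $\Pif_K u$ and using \eqref{eq:C0} at level $4$ is the missing idea that closes it.
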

\begin{proof}
The partition $\cT_h$ of domain $\Om$ includes the set of  parallelograms $\mathcal{N}_1$ and the set of a few remaining boundary triangles $\mathcal{N}_2$, see Fig.~\ref{fig:triangulation} for example.
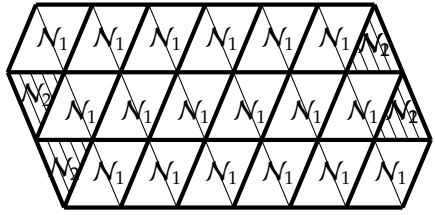
\begin{figure}[!ht]
\begin{center}
\begin{tikzpicture}[xscale=3,yscale=3]
\draw[-,ultra thick] (0.25,0) -- (1.75,0);
\draw[-,ultra thick] (0.125,0.9) -- (1.625,0.9);
\draw[-,ultra thick] (0.375,0.9) -- (0.125,0.3);
\draw[-,ultra thick] (0.625,0.9) -- (0.25,0);
\draw[-,ultra thick] (0.875,0.9) -- (0.5,0);
\draw[-,ultra thick] (1.125,0.9) -- (0.75,0);
\draw[-,ultra thick] (1.375,0.9) -- (1,0);
\draw[-,ultra thick] (1.625,0.9) -- (1.25,0);
\draw[-,ultra thick] (1.75,0.6) -- (1.5,0);
\draw[-] (0.125,0.9) -- (0.5,0);
\draw[-] (0.375,0.9) -- (0.75,0);
\draw[-] (0.625,0.9) -- (1,0);
\draw[-] (0.875,0.9) -- (1.25,0);
\draw[-] (1.125,0.9) -- (1.5,0);
\draw[-] (1.375,0.9) -- (1.75,0);
\draw[-,ultra thick] (1.625,0.9) -- (1.875,0.3);
\draw[-,ultra thick] (0,0.6) -- (0.25,0);
\draw[-,ultra thick] (0.125,0.9) -- (0,0.6);
\draw[-,ultra thick] (0,0.6) -- (1.75,0.6); 
\draw[-,ultra thick] (0.125,0.3) -- (1.875,0.3);
\draw[-,ultra thick] (1.75,0) -- (1.875,0.3);

\draw[-] (0.05,0.6) -- (0.15,0.36);
\draw[-] (0.1,0.6) -- (0.175,0.42);
\draw[-] (0.15,0.6) -- (0.2,0.48);
\draw[-] (0.2,0.6) -- (0.225,0.54);
  
\draw[-] (0.175,0.3) -- (0.275,0.06);
\draw[-] (0.225,0.3) -- (0.3,0.12);
\draw[-] (0.275,0.3) -- (0.325,0.18);
\draw[-] (0.325,0.3) -- (0.35,0.24);

\draw[-] (1.55,0.6) -- (1.525,0.66);
\draw[-] (1.6,0.6) -- (1.55,0.72);
\draw[-] (1.65,0.6) -- (1.575,0.78);
\draw[-] (1.7,0.6) -- (1.6,0.84);
 
\draw[-] (1.675,0.3) -- (1.65,0.36);
\draw[-] (1.725,0.3) -- (1.675,0.42);
\draw[-] (1.775,0.3) -- (1.7,0.48);
\draw[-] (1.825,0.3) -- (1.725,0.54);
  
\node at (0.125,0.5) {$\mathcal{N}_2$};
\node at (0.25,0.175) {$\mathcal{N}_2$};
\node at (1.625,0.725) {$\mathcal{N}_2$};
\node at (1.75,0.425) {$\mathcal{N}_2$};

\node at (0.2,0.75) {$\mathcal{N}_1$}; 
\node at (0.45,0.75) {$\mathcal{N}_1$}; 
\node at (0.7,0.75) {$\mathcal{N}_1$}; 
\node at (0.95,0.75) {$\mathcal{N}_1$}; 
\node at (1.2,0.75) {$\mathcal{N}_1$}; 
\node at (1.45,0.75) {$\mathcal{N}_1$};

\node at (0.325,0.425) {$\mathcal{N}_1$}; 
\node at (0.575,0.425) {$\mathcal{N}_1$}; 
\node at (0.825,0.425) {$\mathcal{N}_1$}; 
\node at (1.075,0.425) {$\mathcal{N}_1$}; 
\node at (1.325,0.425) {$\mathcal{N}_1$}; 
\node at (1.575,0.425) {$\mathcal{N}_1$};

\node at (0.45,0.155) {$\mathcal{N}_1$}; 
\node at (0.7,0.155) {$\mathcal{N}_1$}; 
\node at (0.95,0.155) {$\mathcal{N}_1$}; 
\node at (1.2,0.155) {$\mathcal{N}_1$}; 
\node at (1.45,0.155) {$\mathcal{N}_1$}; 
\node at (1.7,0.155) {$\mathcal{N}_1$};

\node at (1.625,0.725) {$\mathcal{N}_1$};
\node at (1.75,0.425) {$\mathcal{N}_1$};

\end{tikzpicture}
\caption{A uniform triangulation of $\Om$.}
\label{fig:triangulation}
\end{center}
\end{figure}
 Let 
 \begin{equation}\label{kappadef}
 kappa=|\mathcal{N}_2|
 \end{equation} 
 denote the number of the elements in $\mathcal{N}_2$. It holds that
\begin{equation}\label{meshtotal}
\left((I-\PiHHJ)\nabla_h^2 \Pit_h u, (I-\PiHHJ)\nabla_h^2 (I - \Pit_h )u\right)=I_{\mathcal{N}_1} + I_{\mathcal{N}_2} 
\end{equation}
with $\displaystyle I_{\mathcal{N}_i}=\sum_{K\in \mathcal{N}_i} ((I-\PiHHJ)\nabla^2 \Pit_K u, (I-\PiHHJ)\nabla^2 (I - \Pit_K )u)_{0, K}$.

It follows from the estimate \eqref{interr} 
%and the decomposition
%\begin{equation*}
%\begin{split}
%&((I-\PiHHJ)\nabla^2 \Pit u, (I-\PiHHJ)\nabla^2 (I - \Pit)u)_{0, K}\\
%=&((I-\PiHHJ)\nabla^2 \Pit (I - \Pif) u, (I-\PiHHJ)\nabla^2 (I - \Pit)(I - \Pif) u)_{0, K}\\
%& + ((I-\PiHHJ)\nabla^2 \Pit (I - \Pif) u, (I-\PiHHJ)\nabla^2 (I - \Pit)\Pif u)_{0, K}\\
%&+ ((I-\PiHHJ)\nabla^2 \Pit \Pif u, (I-\PiHHJ)\nabla^2 (I - \Pit)(I - \Pif) u)_{0, K}\\
%&+((I-\PiHHJ)\nabla^2 \Pit \Pif u, (I-\PiHHJ)\nabla^2 (I - \Pit)(I - \Pif) u)_{0, K}
%\end{split}
%\end{equation*}
that
\begin{equation}\label{orth1}
\begin{split}
&((I-\PiHHJ)\nabla^2 \Pit_K u, (I-\PiHHJ)\nabla^2 (I - \Pit_K)u)_{0, K}\\
= &((I-\PiHHJ)\nabla^2 \Pit_K u, (I-\PiHHJ)\nabla^2 (I - \Pit_K)\Pif_K u)_{0, K}
+\cO(h^4\|u\|_{5,\Om}^2).
\end{split}
\end{equation} 
Consider the expansion of $(I - \Pit_K)\Pif_K u$. Let $l=4$ in \eqref{eq:C0}. It holds that
\begin{equation*}\label{eq:pi3}
(I - \Pit_K)\Pif_K u \big |_K= \sum_{|\alpha|=4} a_K^\alpha \big ({1\over \alpha !} (x - M_K)^\alpha - \sum_{|\beta|\le 2} C_\alpha^\beta\phi_\beta\big ).
\end{equation*}
This implies that $(I - \Pit_K)\Pif_K u$ does not include any homogeneous third order terms. These vanishing homogeneous third order terms are  crucial for the analysis here. By the definition of the interpolation $\PiHHJ$ and the fact that  $\nabla^2 \phi_\beta$ is constant if $|\beta|=2$, 
$$
(I - \PiHHJ)\nabla^2 (I - \Pit_K )\Pif_K  u\big |_K =  \sum_{|\alpha|=4} {a_K^\alpha\over \alpha!} (I - \PiHHJ) \nabla^2 (x - M_K)^\alpha.
$$
Similarly,  
$\displaystyle 
(I - \PiHHJ)\nabla^2 \Pit_K  u|_K=  \sum_{|\beta|=3} {a_K^\beta\over \beta!} (I - \PiHHJ) \nabla^2 (x - M_K)^\beta.
$
Thus, by \eqref{orth1},
\begin{equation}\label{orth2} 
((I-\PiHHJ)\nabla^2 \Pit_K u, (I-\PiHHJ)\nabla^2 (I - \Pit_K )u)_{0, K}\\
= \sum_{|\alpha|=4}\sum_{|\beta|=3}  {a_K^\alpha a_K^\beta \over \alpha!\beta!} c_K^{\alpha\beta}
+\cO(h^4\|u\|_{5, K}^2),
\end{equation}
with 
$
c_K^{\alpha \beta} :=\big ((I - \PiHHJ) \nabla^2 (x - M_{K})^\beta, (I - \PiHHJ) \nabla^2 (x - M_{K})^\alpha\big )_{0, K}.
$
%\begin{equation}\label{orth2}
%\begin{split}
%&((I-\PiHHJ)\nabla^2 \Pit_K u, (I-\PiHHJ)\nabla^2 (I - \Pit_K )u)_{0, K}\\
%= &\sum_{|\alpha|=4}\sum_{|\beta|=3}  {a_K^\alpha a_K^\beta \over \alpha!\beta!} \big ((I - \PiHHJ) \nabla^2 (x - M_K)^\beta, (I - \PiHHJ) \nabla^2 (x - M_K)^\alpha\big )_{0, K}
%+\cO(h^4|u|_{5, K}^2).
%\end{split}
%\end{equation}

Consider $I_{\mathcal{N}_1}$ in \eqref{meshtotal}. Let $\boldsymbol{c}$ be the centroid of a parallelogram formed by two adjacent elements $K_1$ and  $K_2$. Define a mapping $T: x \rightarrow \tilde{x}=2\boldsymbol{c} - x$. It is obvious that $T$ maps $K_1$ onto $K_2$ and 
$
x - M_{K_1} = -(\tilde{x} - M_{K_2}).
$
Note that for any $|\alpha|=4$ and $|\beta|=3$, $ \nabla^2 (x - M_K)^\alpha$ and $\nabla^2 (x - M_K)^\beta$ are homogeneous polynomials of $x_1-M_1$ and $x_2-M_2$ with degree 2 and 1, respectively. For any adjacent elements $K_1$ and $K_2$ forming a parallelogram,
$
c_{K_1}^{\alpha \beta} = -c_{K_2}^{\alpha \beta}.
$
%\begin{equation}
%\begin{split}
%&\big ((I - \PiHHJ) \nabla^2 (x - M_{K_1})^\beta, (I - \PiHHJ) \nabla^2 (x - M_{K_1})^\alpha\big )_{0, K_1}\\
%=&-\big ((I - \PiHHJ) \nabla^2 (x - M_{K_2})^\beta, (I - \PiHHJ) \nabla^2 (x - M_{K_2})^\alpha\big )_{0, K_2},
%\end{split}
%\end{equation}   
It follows that 
\begin{equation}\label{orth3}
\begin{split}
&((I-\PiHHJ)\nabla^2 \Pit_h u, (I-\PiHHJ)\nabla^2 (I - \Pit_h )u)_{0, K_1\cup K_2}\\
&= \sum_{|\alpha|=4}\sum_{|\beta|=3} {1\over \alpha!\beta!}(a_{K_1}^\beta a_{K_1}^\alpha - a_{K_2}^\beta a_{K_2}^\alpha) c_{K_1}^{\alpha \beta}
+\cO(h^4\|u\|_{5,\Om}^2)\\
&=\sum_{|\alpha|=4}\sum_{|\beta|=3} {1\over \alpha!\beta!}\big (a_{K_1}^\beta (a_{K_1}^\alpha - a_{K_2}^\alpha) +(a_{K_1}^\beta - a_{K_2}^\beta) a_{K_2}^\alpha\big ) c_{K_1}^{\alpha \beta}
+\cO(h^4\|u\|_{5,\Om}^2).
\end{split}
\end{equation} 
Recall $a_K^\alpha={1\over |K|}\int_K  D^\alpha u\dx$ in \eqref{eq:pil}. 
Note that for all $v\in P_4(K_1\cup K_2)$, we have
$
{1\over |K_1|}\int_{K_1}  D^\alpha v\dx-{1\over |K_2|}\int_{K_2}  D^\alpha v\dx =0.
$ 
The Bramble-Hilbert Lemma leads that
$$
\left|{1\over |K_1|}\int_{K_1}  D^\alpha v\dx-{1\over |K_2|}\int_{K_2}  D^\alpha v\dx\right |\lesssim  |u|_{5,K_1\cup K_2}.
$$
A similar analysis for $|\beta|=3$ gives 
\begin{equation}\label{orth4}
|a_{K_1}^\alpha - a_{K_2}^\alpha| + |a_{K_1}^\beta - a_{K_2}^\beta|\lesssim  \|u\|_{5, K_1\cup K_2}.
\end{equation} 
Note that 
\begin{equation}\label{orth5}
|c_{K_1}^{\alpha \beta}|\lesssim h^2 \|\nabla^3 (x - M_{K_1})^\beta\|_{0,K_1} \|\nabla^3 (x - M_{K_1})^\alpha\|_{0,K_1} \lesssim h^3|K_1|\quad \mbox{for }\quad |\alpha|=4,\ |\beta|=3.
\end{equation} 
A substitution of \eqref{orth4} and \eqref{orth5} into \eqref{orth3} gives 
\be\label{meshtotal1}
\begin{split}
I_{\mathcal{N}_1}&\lesssim \sum_{K\in\mathcal{N}_1} h^4 \|u\|_{5, K}|K|^{\frac12}+\cO(h^4\|u\|_{5,\Om}^2)\\
& \le h^4 \left(\sum_{K\in\mathcal{N}_1} \|u\|_{5, K}^2\right)^{\frac12}\left(\sum_{K\in\mathcal{N}_1} |K|\right)^{\frac12}+\cO(h^4\|u\|_{5,\Om}^2)
\lesssim h^4\|u\|_{5, \Om}^2.
\end{split}
\ee
For any element $K\in \mathcal{N}_2$, it follows from \eqref{orth2}  and \eqref{orth5} that
$$
\big | ((I-\PiHHJ)\nabla^2 \Pit_K u, (I-\PiHHJ)\nabla^2 (I - \Pit_K )u)_{0, K} \big |\lesssim h^3  |K|.
$$
Since $\sum_{K\in \mathcal{N}_2}|K|\lesssim h$, it follows that
\be\label{meshtotal2}
I_{\mathcal{N}_2}\lesssim h^4\|u\|_{5, \Om}^2.
\ee
A substitution of \eqref{meshtotal1} and \eqref{meshtotal2} into \eqref{meshtotal} leads to
$$
\big |((I-\PiHHJ)\nabla^2 \Pit_h u, (I-\PiHHJ)\nabla^2 (I - \Pit_h )u) \big | \lesssim h^4\|u\|_{5, \Om}^2,
$$
which completes the proof.
\end{proof}

Thanks to Lemmas \ref{lm:gammaconstant}, \ref{RT2h40} and  \ref{Lm:m}, there exists the following fourth-order accurate expansion of $\parallel (I-\PiHHJ)\nabla^2 u\parallel_{0, \Om}^2$ in Theorem \ref{Th:extraCR}.
\begin{lemma}\label{RT2h4}
For any  $u\in V\cap H^{5}(\Om,\mathbb{R})$,
\begin{equation} \label{identity:RTu} 
\parallel (I-\PiHHJ)\nabla^2 u\parallel_{0,\Om}^2= {1\over N}F(u, \Om)|\Omega| + \cO(h^4\|u\|_{5, \Om}^2),
\end{equation}
where $F(u, \Om)$ in \eqref{def:F} is independent of the mesh size $h$ and N is the number of elements.
\end{lemma}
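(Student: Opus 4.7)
The plan is to decompose $u = \Pi^3_h u + (I-\Pi^3_h) u$ inside $\|(I-\PiHHJ)\nabla^2 u\|_{0,\Om}^2$ exactly as in the orthogonality expansion \eqref{orthexpan}, and then handle the three resulting pieces using Lemmas~\ref{RT2h40}, \ref{Lm:m}, and the standard interpolation estimate \eqref{interr}. Since $\Pi^3_h u|_K \in P_3(K,\R)$, the leading piece $\|(I-\PiHHJ)\nabla_h^2 \Pi^3_h u\|_{0,\Om}^2$ can be evaluated exactly via \eqref{identity:RT2}, so the whole proof reduces to controlling two error terms and then replacing $\Pi^3_h u$ by $u$ in $F(\cdot,\Om)$.

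First I would apply Lemma~\ref{RT2h40} elementwise to get
\[
\|(I-\PiHHJ)\nabla_h^2 \Pi^3_h u\|_{0,\Om}^2 = \sum_{K\in\cT_h} F(\Pi^3_h u, K)\,|K|.
\]
On a uniform triangulation $|K|=|\Om|/N$ is independent of $K$, so the sum equals $\tfrac{|\Om|}{N} F(\Pi^3_h u, \Om)$. Next I would compare $F(\Pi^3_h u,\Om)$ with $F(u,\Om)$. By the definition \eqref{def:pik} of $\Pi^3_K$, for every multi-index $\gamma$ with $|\gamma|=3$ the third derivative $\partial^\gamma \Pi^3_K u$ is the $L^2$-projection of $\partial^\gamma u$ onto $P_0(K,\R)$. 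Using this orthogonality,
\[
\|\partial^\gamma u\|_{0,K}^2 - \|\partial^\gamma \Pi^3_K u\|_{0,K}^2
 = \|(I-\Pi^3_K)\partial^\gamma u\|_{0,K}^2 \lesssim h^2 |u|_{4,K}^2,
\]
and the analogous cancellation of the cross terms $\int_K \partial^\gamma u\,\partial^\delta u\,dx$ (using $\int_K (\partial^\gamma u - \overline{\partial^\gamma u})\,dx = 0$) gives the same order. Summing over $K$ and using that the $\HHJcro^{ij}$ are bounded constants by Lemma~\ref{lm:gammaconstant} yields $|F(u,\Om) - F(\Pi^3_h u,\Om)| \lesssim h^2 \|u\|_{4,\Om}^2$, so that multiplying by $|\Om|/N \sim h^2$ produces an error absorbed into $\cO(h^4\|u\|_{5,\Om}^2)$.

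The cross term $2\bigl((I-\PiHHJ)\nabla_h^2(I-\Pi^3_h) u,\, (I-\PiHHJ)\nabla_h^2\Pi^3_h u\bigr)$ is bounded by $\cO(h^4\|u\|_{5,\Om}^2)$ directly by Lemma~\ref{Lm:m}; this is the step where the parallelogram cancellation inherent in the uniform mesh is used. The remaining piece $\|(I-\PiHHJ)\nabla_h^2 (I-\Pi^3_h) u\|_{0,\Om}^2$ is controlled by the boundedness of $I-\PiHHJ$ on piecewise $L^2$ and the interpolation estimate \eqref{interr} with $l=3$, $m=2$, giving $\lesssim h^4 |u|_{4,\Om}^2 \lesssim h^4\|u\|_{5,\Om}^2$. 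Combining the three bounds with the leading term gives the desired identity \eqref{identity:RTu}.

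The only genuinely delicate point has already been dispatched in Lemma~\ref{Lm:m}: without the vanishing of the homogeneous cubic part of $(I-\Pi^3_K)\Pi^4_K u$ and the $T$-symmetry cancellation on parallelogram pairs, the cross term would only be $\cO(h^3)$, and the full statement would lose one order. Given that lemma, together with Lemma~\ref{RT2h40} and the constancy of $\HHJcro^{ij}$ from Lemma~\ref{lm:gammaconstant}, the present proof is essentially an assembly of the pieces described above; no new technical ingredient is required beyond the orthogonality of averaging and the Bramble--Hilbert estimate for $I-\Pi^3_h$.
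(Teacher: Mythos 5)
Your proposal is correct and follows essentially the same route as the paper: the paper itself obtains Lemma~\ref{RT2h4} by combining the decomposition \eqref{orthexpan} with Lemma~\ref{RT2h40} for the leading term, Lemma~\ref{Lm:m} for the cross term, and \eqref{interr} for the quadratic remainder, exactly as you do. Your explicit comparison of $F(\Pi^3_h u,\Om)$ with $F(u,\Om)$ via the $L^2$-projection property of the third derivatives of $\Pi^3_K u$ is a detail the paper leaves implicit, and it is handled correctly.
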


\subsection{Error estimate of $I_1=(\sigma - \sHHJc, \sHHJc - \sHHJ)$}
The first order term $\sigma - \sHHJc$ leads to a suboptimal result of $I_1$ if 
the Cauchy-Schwarz inequality is applied directly.
To prove an optimal  error estimate, we employ the commuting property of the interpolation of the Morley element and the equivalence between the HHJ element and the  modified Morley element. This allows to express $I_1$ in terms of  the error of $\uHHJ$ with a second order accuracy, which leads to the desired optimal estimate.
\begin{lemma}\label{lm:I1}
Suppose that $(\lambda, u)$ is the solution  of  Problem \eqref{variance} and $u\in V\cap H^3(\Om, \R)$. It holds that 
$$
|I_1|\lesssim h^4\|u\|_{3,\Om}^2.
$$
\end{lemma}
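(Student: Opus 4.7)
The plan is to avoid the naive Cauchy--Schwarz bound $|I_1|\le\|\sigma-\sHHJc\|_{0,\Om}\|\sHHJc-\sHHJ\|_{0,\Om}$, which gives only $O(h\cdot h^2)=O(h^3)$ because $\sigma-\sHHJc$ is only first-order accurate. Instead, I combine the equivalence Lemma \ref{lm:equiv} with the commuting property \eqref{commuting} of $\PiM$ to rewrite $I_1$ as an $L^2$ inner product in which both factors are genuinely $O(h^2)$.

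By Lemma \ref{lm:equiv}, $\sHHJc-\sHHJ=\nabla_h^2(\Mumc-\Mum)$. Since $\Mumc-\Mum\in\VM$, the commuting property \eqref{commuting} yields $(\nabla_h^2(u-\PiM u),\nabla_h^2(\Mumc-\Mum))=0$, so
$$
I_1=(\nabla_h^2(\PiM u-\Mumc),\nabla_h^2(\Mumc-\Mum)).
$$
Because $\PiM u-\Mumc\in\VM$ is an admissible test function, subtracting the modified Morley source problems \eqref{eq:Mmc} and \eqref{eq:Mm} produces the key identity
$$
I_1=(\lambda u-\Mlam\Mu,\,\Pi_{\rm D}(\PiM u-\Mumc)).
$$
Since $\Pi_{\rm D}$ depends only on vertex values and $\PiM u(\bp)=u(\bp)$ at every vertex by \eqref{ecrinterpolation}, one has $\Pi_{\rm D}\PiM u=\Pi_{\rm D}u$; combining this with $\uHHJc=\Pi_{\rm D}\Mumc$ from \eqref{relationc} gives $\Pi_{\rm D}(\PiM u-\Mumc)=\Pi_{\rm D}u-\uHHJc$.

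Bounding the two factors is then routine. Writing $\Pi_{\rm D}u-\uHHJc=(\Pi_{\rm D}u-u)+(u-\uHHJc)$ and applying standard $P_1$ nodal interpolation together with the HHJ error estimate \eqref{RT:est} yields $\|\Pi_{\rm D}u-\uHHJc\|_{0,\Om}\lesssim h^2\|u\|_{3,\Om}$. The Morley eigenvalue estimate \eqref{M:est} implies both $|\lambda-\Mlam|\lesssim h^2\|u\|_{3,\Om}$ and $\|u-\Mu\|_{0,\Om}\lesssim h^2\|u\|_{3,\Om}$, hence $\|\lambda u-\Mlam\Mu\|_{0,\Om}\lesssim h^2\|u\|_{3,\Om}$. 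A Cauchy--Schwarz then delivers $|I_1|\lesssim h^4\|u\|_{3,\Om}^2$. The only nontrivial step is the opening manoeuvre: recognising that the Morley commuting property together with the HHJ--Morley equivalence replace the suboptimal factor $\sigma-\sHHJc$ by the full second-order quantity $\Pi_{\rm D}u-\uHHJc$; after this rearrangement, everything else is textbook.
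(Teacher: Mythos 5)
Your proposal is correct and follows essentially the same route as the paper's proof: the HHJ--Morley equivalence converts $I_1$ into $(\nabla_h^2(u-\Mumc),\nabla_h^2(\Mumc-\Mum))$, the commuting property \eqref{commuting} replaces $u$ by $\PiM u$, subtracting the two modified source problems \eqref{eq:Mmc} and \eqref{eq:Mm} gives $I_1=(\lambda u-\Mlam\Mu,\Pi_{\rm D}(\PiM u-\Mumc))$, and the two factors are bounded by $O(h^2)$ exactly as in the paper. Your observation that $\Pi_{\rm D}\PiM u=\Pi_{\rm D}u$ is a slightly cleaner way to phrase the paper's triangle-inequality step \eqref{eq:I14}, but it is the same argument.
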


\begin{proof}
By the equivalence \eqref{relation} between the HHJ element and  modified  Morley element,
\be\label{eq:I11}
(\sigma - \sHHJc, \sHHJc - \sHHJ)  = (\nabla_h^2 (u - \Mumc), \nabla_h^2 ( \Mumc - \Mum)). 
\ee
Since $\Mumc - \Mum\in \VM$, the commuting property \eqref{commuting} of the Morley element  leads to 
\be\label{eq:I12}
(\nabla_h^2 (u - \Mumc), \nabla_h^2 ( \Mumc - \Mum)) = (\nabla_h^2 (\PiM u - \Mumc), \nabla_h^2 ( \Mumc - \Mum)).
\ee
Recall that $\Mumc$ and $\Mum$ are the solutions of Problem \eqref{eq:Mmc} with $f=\lambda u$ and $\Mlam\Mu$, respectively. It follows that
\be\label{eq:I13}
(\nabla_h^2 (\PiM u - \Mumc), \nabla_h^2 ( \Mumc - \Mum)) = (\lambda u - \Mlam\Mu, \Pi_{\rm D} (\PiM u - \Mumc)).
\ee
Thanks to the error estimate \eqref{M:est} of the Morley element and \eqref{RT:est} of the HHJ element, the equivalence \eqref{relation} between the  modified Morley element and the HHJ element,  and the triangle inequality,  
\be\label{eq:I14}
\|\Pi_{\rm D} (\PiM u - \Mumc)\|_{0, \Om} \le \|\Pi_{\rm D} (\PiM u - u) \|_{0, \Om} + \|u- \uHHJc\|_{0, \Om} \le h^2\|u\|_{3,\Om}.
\ee
A combination of  \eqref{M:est}, \eqref{eq:I11}, \eqref{eq:I12}, \eqref{eq:I13} and \eqref{eq:I14} gives
\begin{equation*}
|(\sigma - \sHHJc, \sHHJc - \sHHJ)|\le h^2(\lambda |u-\Mu| + |\lambda - \Mlam|)\|u\|_{3,\Om}\le h^4\|u\|_{3,\Om}^2,
\end{equation*}
which completes the proof. 

\end{proof}

\subsection{Error estimate of $I_2=(\sigma - \sHHJ, \nabla_h^2 (\Mum-  \Mu))$}
This term is essentially a consistency error term of the Morley element with third order accuracy by a direct use of the Cauchy-Schwarz inequality. The main idea here is to employ the equivalence between the HHJ element and the  modified Morley element and make use of the weak continuity of solutions in $\VM$.  

\begin{lemma}\label{lm:I2}
Suppose that $(\lambda, u)$ is the solution  of Problem  \eqref{variance} and $u\in V\cap H^{3}(\Om, \R)$. It holds %on uniform triangulations
 that
$$
|I_2|\lesssim h^4\|u\|_{3,\Om}^2.
$$
\end{lemma}
\begin{proof}
By the equivalence \eqref{relation} between the HHJ element and the modified  Morley element and the superclose property in Lemma \ref{supereig1},
\begin{equation*}
\begin{split}
I_2=&(\nabla^2 (u -  \Mum), \nabla_h^2 (\Mum-  \Mu))
=(\nabla^2 (u -  \Mu), \nabla_h^2 (\Mum-  \Mu)) + \cO(h^4\|u\|_{3, \Om}^2).
\end{split}
\end{equation*}
By the commuting property \eqref{commuting} of the Morley element and  Problems \eqref{discrete}, \eqref{eq:Mmc},
\begin{equation}\label{en} 
(\nabla^2 (u -  \Mu), \nabla_h^2 (\Mum-  \Mu))=\Mlam(\Mu, (I - \Pi_{\rm D})s_h)
\end{equation}
with $s_h=\PiM u -  \Mu$. Note that
\begin{equation}\label{en1}
\begin{split}
(\Mlam\Mu, (I - \Pi_{\rm D})s_h)=(\lambda \Pi_h^0 u, (I - \Pi_{\rm D})s_h) + (\Mlam\Mu - \lambda \Pi_h^0 u, (I - \Pi_{\rm D})s_h).
\end{split}
\end{equation}
It follows from the triangle inequality and the error estimate \eqref{M:est} that
\begin{equation}\label{en2}
\begin{split}
\left|(\Mlam\Mu - \lambda \Pi_h^0 u, (I - \Pi_{\rm D})s_h)\right|\lesssim h^4\|u\|_{3,\Om}^2.
\end{split}
\end{equation}
According to the expansion of the interpolation error in \cite{huang2008superconvergence} for the linear element,
\begin{equation}\label{en3} 
(\lambda \Pi_h^0 u, (I - \Pi_{\rm D})s_h)= (-\frac12 \sum_{i=1}^3{\partial^2 s_h\over \partial \bold{t}_i^2}\psi_{i-1}\psi_{i+1}|e_i|^2, \lambda \Pi_h^0 u)
=-{\lambda \over 24}\sum_{i=1}^3 |e_i|^2( {\partial^2 s_h\over \partial \bold{t}_i^2}, \Pi_h^0u),
\end{equation}
where $\psi_i$ are the barycentric coordinates. Integration by parts indicates that
\begin{equation}\label{en4}
\begin{split}
\int_K {\partial^2 s_h\over \partial \bold{t}_i^2} \Pi_h^0u\dx =&\int_{\partial K} {\partial s_h\over \partial \bold{t}_i} \Pi_h^0u(\bold{n}\cdot \bold{t}_i)\ds.
\end{split}
\end{equation}
As $s_h\in \VM$, it holds that
$
\int_e [\nabla s_h]\ds=0.
$
Note that $\bold{t}_e$ is the unit tangent vector of edge $e$ with the same direction on two adjacent elements sharing the edge $e$, and $\bold{t}_i$ is the one along the boundary of an element with counterclockwise orientation.
Since $\bold{n}\cdot \bold{t}_i$ is the same on two adjacent elements, it follows \eqref{M:est} that 
\begin{equation}\label{en6}
\begin{split}
\left| \sum_{K\in\cT_h} |e_i|^2\int_{\partial K} {\partial s_h\over \partial \bold{t}_i} \Pi_h^0u(\bold{n}\cdot \bold{t}_i)\ds\right|
&=\left|\sum_{e\in\cE_h}(\bold{n}\cdot \bold{t}_i) |e_i|^2\int_{e} {\partial s_h\over \partial \bold{t}_i} [(\Pi_h^0-\Pi_e^0)u]\ds\right|
\\
&\lesssim h^2|s_h|_{1,h}\|u\|_{1,\Om}\lesssim h^4\|u\|_{3,\Om}^2. 
\end{split}
\end{equation}
%By the triangle inequality,
%\begin{equation}\label{en5}
%\begin{split}
%|\sum_{e\in\cE_h}(n\cdot t_i)^2 |e_i|^2\int_{e} (I-\Pi_e^0){\partial s_h\over \partial n} (I-\Pi_e^0)u\ds|\lesssim &h^3\|\nabla^2s_h\|_0\|u\|_1\lesssim h^4\|u\|_{3, \Om}^2.
%\end{split}
%\end{equation}
%It follows from the integration by parts that
%\begin{equation}\label{en7}
%\begin{split} 
%\sum_{K\in\cT_h}\int_{\partial K} {\partial s_h\over \partial t} u\ds
%&=\sum_{K\in\cT_h}\int_{\partial K} {\partial (I-\Pi_K^0)s_h\over \partial t} u\ds = -\sum_{K\in\cT_h}\int_{\partial K} {\partial u\over \partial t} (I - \Pi_K^0)s_h\ds
%\\
%&=\sum_{e\in\cE_h^i}(n\cdot t_i)(t\cdot t_i) |e_i|^2\int_{e} {\partial u\over \partial t} [(I-\Pi_h^0)s_h]\ds.
%\end{split}
%\end{equation}
%By the estimates in \eqref{M:est} and \eqref{eq:Msuper}, $\|\nabla_h s_h\|_0\lesssim h^2$. Thus, 
%\begin{equation}\label{en8}
%\begin{split} 
%|\sum_{e\in\cE_h^i}(n\cdot t_i)(t\cdot t_i) |e_i|^2\int_{e} {\partial u\over \partial t} [(I-\Pi_h^0)s_h]\ds|&\lesssim h^2\|\nabla_h s_h\|_0\|u\|_1\lesssim h^4\|u\|_{3, \Om}^2,
%\\
%\left |\sum_{i=1}^3 \sum_{K\in\cT_h} |e_i|^2\int_K {\partial s_h\over \partial t_i} {\partial u\over \partial t_i} \dx \right | &\lesssim h^2\|\nabla_hs_h\|_0\|\nabla u\|_0\lesssim h^4\|u\|_{3, \Om}^2.
%\end{split}
%\end{equation}
%A combination of \eqref{en3}, \eqref{en4}, \eqref{en6}, \eqref{en5}, \eqref{en7} and \eqref{en8} gives
%\begin{equation}\label{en9}
%\begin{split}
%\left |(\lambda \Pi_h^0 u, (I - \Pi_{\rm D})s_h)\right |\lesssim h^4.
%\end{split}
%\end{equation}
By the commuting property of the interpolation $\PiM$ in \eqref{commuting}, a substitution of \eqref{en1}, \eqref{en2}, \eqref{en3}, \eqref{en4} and \eqref{en6} into \eqref{en} gives
$$
|I_2|\lesssim h^4\|u\|_{3,\Om}^2,
$$
which completes the proof.
\end{proof}

\subsection{Error estimate of $I_3=\lambda (u-\PiM u, u )$} 
This interpolation term $I_3$ can not be cancelled with other terms as the analysis in \cite{hu2019asymptotic}  for the Crouzeix-Raviart element.
The key here to obtain an optimal estimate is to exploit the Taylor expansion of $(I-\PiM)u$ and make full use of the uniform triangulations. 

Define
\be\label{Mbase}
\phiM^i = \psi_{i-1}^2\psi_{i+1},\quad \phiM^4=\psi_1\psi_2\psi_3,\quad 1\le i\le 3
\ee
with the barycentric coordinates $\{\psi_i\}_{i=1}^3$. For two adjacent elements $K_1$ and $K_2$ forming a parallelogram, let the local index of vertices satisfy $\nabla \psi_{i}|_{K_1}=-\nabla \psi_{i}|_{K_2},~i=1,2,3.$
By \eqref{nlambda}, there exist the following properties of these cubic polynomials
\begin{equation}\label{eq:Mproperty}
\begin{split}
{\partial^3\phiM^i\over \partial\bt_j^3} =-{2\over |e_i|^3} \delta_{ij}, \quad {\partial^3\phiM^4\over \partial\bt_{j-1}^2\partial \bt_{j+1}}={2\over |e_{j+1}||e_{j-1}|^2}, \quad 1\le i\le 4,\ 1\le j\le 3,
\\
{\partial^3\phiM^i\over \partial\bt_{j-1}^2\partial \bt_{j+1}}=\begin{cases}
-{2\over |e_{j+1}||e_{j-1}|^2} &i=j+1\\
0&i=j\\
{2\over |e_{j+1}||e_{j-1}|^2} &i=j-1
\end{cases},\quad 1\le i, j\le 3.
\end{split}
\end{equation}
%\ben
%{\partial^3\phiM^i\over \partial\bt_j^3} =-{2\sin^2\theta_{i+1}\sin\theta_{i-1}\over d_{i-1}^2d_{i+1}} \delta_{ij}=-{2\over |e_i|^3} \delta_{ij}, \quad 1\le i\le 4,\ 1\le j\le 3,
%\een
%\ben
%{\partial^3\phiM^4\over \partial\bt_{j-1}^2\partial \bt_{j+1}}={\sin^2\theta_j \sin \theta_{j+1}\over d_1d_2d_3}={1\over |e_{j-1}|^2|e_{j+1}|},\quad 1\le j\le 3.
%\een
Note that 
\be\label{p32}
P_3(K, \R)=P_2(K, \R) + \mbox{span}\{\phiM^i,\ 1\le i\le 4\}.
\ee
For $1\le i\le 3$, $\psi_{i}^{2} \psi_{i-1}=\phiM^{i+1}$ and 
$$
\psi_{i}^{3}=-\phiM^{i+1}+\phiM^{i-1}+\phiM^{4}-\psi_{i} \psi_{i+1}+\psi_{i}^{2},\quad
\psi_{i}^{2} \psi_{i+1}=-\phiM^{i-1}-\phiM^{4}+\psi_{i} \psi_{i+1}.
$$ 

%\begin{equation}
%\begin{aligned}
%\psi_{i}^{3} &=\psi_{i}^{2}\left(1-\psi_{i-1}-\psi_{i+1}\right) \\
%&=-\psi_{i}^{2} \psi_{i+1}-\psi_{i}^{2} \psi_{i-1}+\psi_{i}^{2} \\
%&=-\psi_{i}^{2} \psi_{i+1}-\psi_{i} \psi_{i-}\left(1-\psi_{i-1}-\psi_{i+1}\right)+\psi_{i}^{2} \\
%&=-\psi_{i}^{2} \psi_{i+1}+\psi_{i-1}^{2} \psi_{i}+\psi_{i-1} \psi_{i} \psi_{i+1}-\psi_{i} \psi_{i-1}+\psi_{i}^{2}
%\end{aligned}
%\end{equation}
%\begin{equation}
%\begin{aligned}
%\psi_{i}^{2} \psi_{i-1}=& \psi_{i}^{2}\left(1-\psi_{i}-\psi_{i+1}\right) \\
%&=\psi_{i}^{2}-\psi_{i}^{3}-\psi_{i}^{2} \psi_{i+1} \\
%&=\psi_{i}^{2}-\left(-\phiM^{2}+\phiM^{i-1}+\phiM^{4}\right)-\psi_{i} \psi_{i-1}+\psi_{i}^{2}-\phi_{m}^{2}
%\end{aligned}
%\end{equation}

\begin{lemma}\label{lm:morleyexpan}
For any $w\in P_3(K,\R)$, 
\be \label{eq:I31}
(I-\PiM )\Pit_h w = \sum_{i=1}^4 c_i(I-\PiM )\phiM^i
\ee
with 
\begin{equation} \label{eq:I30}
\begin{split}
c_j &= -{|e_j|^3\over 2} \Pi_h^0 {\partial^3 w\over \partial\bt_j^3},\qquad 1\le j\le 3,\\  
c_4 &= {|e_{j+1}||e_{j-1}|^2\over 2} \Pi_h^0 {\partial^3 w\over \partial\bt_{j-1}^2\partial \bt_{j+1}} -{|e_{j+1}|^3\over 2}\Pi_h^0{\partial^3 w\over \partial\bt_{j+1}^3} + {|e_{j-1}|^3\over 2}\Pi_h^0 {\partial^3 w\over \partial\bt_{j-1}^3}.
\end{split}
\end{equation}
\end{lemma}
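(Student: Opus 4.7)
The plan is to combine the decomposition \eqref{p32} with two near-trivial facts about the operators $\Pit_h$ and $\PiM$, and then read off the coefficients $c_i$ by applying constant-coefficient third-order tangential derivatives. Since $\Pit_h$ preserves cubic polynomials (immediate from the defining moment conditions \eqref{def:pik}), I have $\Pit_h w = w$ for any $w \in P_3(K, \R)$, so \eqref{eq:I31} is an assertion purely about $(I - \PiM)w$. By \eqref{p32} there exist $p_2 \in P_2(K, \R)$ and constants $c_1, \ldots, c_4$ with $w = p_2 + \sum_{i=1}^4 c_i \phiM^i$, and since the Morley shape functions span $P_2(K, \R)$ we have $(I - \PiM) p_2 = 0$. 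This already produces the structural identity \eqref{eq:I31}; what remains is to identify the $c_i$.

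To pin down $c_j$ for $1 \le j \le 3$, I will apply the operator $\partial^3/\partial \bt_j^3$ to the decomposition. The $P_2$ remainder drops out, and by the first line of \eqref{eq:Mproperty} the sum $\sum_i c_i\, \partial^3 \phiM^i/\partial \bt_j^3$ collapses to $-(2/|e_j|^3)\,c_j$, yielding $c_j = -(|e_j|^3/2)\,\partial^3 w/\partial \bt_j^3$. Because $w \in P_3(K, \R)$, this third derivative is already constant on $K$ and therefore coincides with its $L^2$ projection $\Pi_h^0\, \partial^3 w/\partial \bt_j^3$, which matches the first formula of \eqref{eq:I30}.

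For $c_4$ I will fix any $j \in \{1,2,3\}$ and apply the mixed operator $\partial^3/\partial \bt_{j-1}^2 \partial \bt_{j+1}$. Once more the $p_2$ piece vanishes, and the second and third lines of \eqref{eq:Mproperty} show that exactly $\phiM^{j-1}$, $\phiM^{j+1}$, and $\phiM^4$ contribute, producing an identity of the form $(c_{j-1} - c_{j+1} + c_4)\cdot 2/(|e_{j+1}||e_{j-1}|^2) = \partial^3 w/\partial \bt_{j-1}^2 \partial \bt_{j+1}$. Solving for $c_4$ and substituting the previously obtained values of $c_{j-1}$ and $c_{j+1}$ delivers the second formula of \eqref{eq:I30}.

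The only real obstacle I anticipate is the sign- and cyclic-index bookkeeping in the mixed-derivative step for $c_4$, where an arithmetic slip is easy; beyond that the proof is essentially a linear-algebra computation driven entirely by the already-established properties \eqref{p32} and \eqref{eq:Mproperty}, with no further analytic input required.
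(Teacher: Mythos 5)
Your proposal is correct and follows essentially the same route as the paper: decompose via \eqref{p32}, kill the $P_2$ remainder with $(I-\PiM)$, and recover the coefficients by applying the third-order tangential derivatives from \eqref{eq:Mproperty} (the paper phrases the last step as $\partial^\alpha\Pit_h w=\Pi_h^0\partial^\alpha w$ rather than your equivalent observation that $\Pit_h w=w$ and $\partial^3 w$ is constant for $w\in P_3(K,\R)$). The sign and index bookkeeping you flag does work out to the stated formula for $c_4$.
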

\begin{proof}
By \eqref{p32}, there exist constants $c_i$ and $p_2\in P_2(K, \R)$ such that
\be\label{eq:I300}
\Pit_h w = \sum_{i=1}^4 c_i\phiM^i + p_2.
\ee
It leads to \eqref{eq:I31} 
with coefficients $c_i$ to be determined. Taking third derivatives on both sides of \eqref{eq:I300}, it follows from \eqref{eq:Mproperty} that
\ben
{\partial^3\Pit_h w\over \partial\bt_j^3} = -{2\over |e_j|^3}c_j, \qquad {\partial^3\Pit_h w\over \partial\bt_{j-1}^2\partial \bt_{j+1}} = {2\over |e_{j+1}||e_{j-1}|^2}(c_{j-1} - c_{j+1} + c_4).
\een
The fact that $\partial^\alpha \Pit_h w=\Pi_h^0 \partial^\alpha w$ with $|\alpha|=3$ leads to  \eqref{eq:I30} directly, and completes the proof.
\end{proof}

The  expansion in Lemma \ref{lm:morleyexpan} of the interpolation error of the Morley  leads to the following optimal analysis of $I_3$ on uniform triangulations.
\begin{lemma}\label{lm:I3}
Suppose that $(\lambda, u)$ is the solution of  Problem \eqref{variance} and $u\in V\cap H^{4}(\Om, \R)$. It holds on uniform triangulations that
$$
|I_3|\lesssim h^4 \|u\|_{4,\Om}^2.
$$
\end{lemma}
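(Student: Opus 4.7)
The plan is to reduce $I_3=\lambda(u-\PiM u,u)$ to a sum over elements in which cancellation between the two triangles of a parallelogram becomes visible. First I would split
\[
(u-\PiM u,u)=((I-\PiM)\Pit_h u,\Pi_h^0 u)+((I-\PiM)\Pit_h u,(I-\Pi_h^0)u)+((I-\PiM)(I-\Pit_h)u,u),
\]
with $\Pit_h u$ the piecewise cubic interpolant from \eqref{def:pik} and $\Pi_h^0$ the $L^2$-projection onto piecewise constants. The third summand is $\cO(h^4\|u\|_{4,\Om}^2)$ by the standard estimates $\|(I-\PiM)v\|_{0,K}\lesssim h^2|v|_{2,K}$ and $|(I-\Pit_h)u|_{2,K}\lesssim h^2|u|_{4,K}$. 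For the second summand I would apply Cauchy--Schwarz with $\|(I-\PiM)\Pit_h u\|_{0,\Om}\lesssim h^3\|u\|_{3,\Om}$ and $\|(I-\Pi_h^0)u\|_{0,\Om}\lesssim h\|u\|_{1,\Om}$ to get $\cO(h^4\|u\|_{4,\Om}^2)$ as well.

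The main piece then becomes, by Lemma~\ref{lm:morleyexpan},
\[
((I-\PiM)\Pit_h u,\Pi_h^0 u)=\sum_{K\in\Tma}(\Pi_K^0 u)\sum_{i=1}^{4}c_i^K\gamma_i^K|K|,\qquad \gamma_i^K:=\frac{1}{|K|}\int_K(I-\PiM)\phiM^i\,dx.
\]
Let $T\colon x\mapsto 2c-x$ be the point reflection about the center of the parallelogram formed by adjacent triangles $K_1,K_2\in\Tma$. Because $T$ is an orientation-preserving isometry that permutes the Morley degrees of freedom and, under the labeling convention $\nabla\psi_i|_{K_1}=-\nabla\psi_i|_{K_2}$, sends $\phiM^i|_{K_1}$ to $\phiM^i|_{K_2}$, the constants $\gamma_i^{K_1}=\gamma_i^{K_2}=:\gamma_i$ agree on any such pair.

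The cancellation proper uses $\bt_j^{K_2}=-\bt_j^{K_1}$, so that $\partial^3_{\bt_j^{K_2}}u=-\partial^3_{\bt_j^{K_1}}u$ on $K_1\cup K_2$ and hence
\[
c_j^{K_1}+c_j^{K_2}=-\frac{|e_j|^3}{2}\Bigl(\tfrac{1}{|K_1|}\!\int_{K_1}\partial^3_{\bt_j^{K_1}}u\,dx-\tfrac{1}{|K_2|}\!\int_{K_2}\partial^3_{\bt_j^{K_1}}u\,dx\Bigr)\lesssim h^4|K|^{-1/2}\|u\|_{4,K_1\cup K_2}
\]
by the Bramble--Hilbert lemma applied to the difference of adjacent averages of an $H^1$ function; an identical term-by-term argument yields the same bound for $c_4^{K_1}+c_4^{K_2}$. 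Writing each pair contribution as $(\Pi_{K_1}^0u)(c_i^{K_1}+c_i^{K_2})+(\Pi_{K_2}^0u-\Pi_{K_1}^0u)c_i^{K_2}$ and using $|\Pi_{K_2}^0u-\Pi_{K_1}^0u|\lesssim h|K|^{-1/2}\|u\|_{1,K_1\cup K_2}$ together with $|c_i^{K_2}|\lesssim h^3|K|^{-1/2}\|u\|_{3,K_2}$, Cauchy--Schwarz over the parallelogram pairs produces $\cO(h^4\|u\|_{4,\Om}^2)$. The unpaired boundary triangles in $\mathcal{N}_2$ are treated exactly as in the boundary step of Lemma~\ref{Lm:m}: a crude per-element bound combined with $\sum_{K\in\mathcal{N}_2}|K|\lesssim h$ yields an $\cO(h^4\|u\|_{4,\Om}^2)$ contribution.

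The main obstacle is the reflection-symmetry step: I must verify carefully that, under the labeling convention adjacent to \eqref{eq:Mproperty}, the reflection $T$ simultaneously identifies the Morley degrees of freedom and the basis functions $\phiM^i$ on the two partner triangles so that $\gamma_i^{K_1}=\gamma_i^{K_2}$, and that this descent transfers to the mixed coefficient $c_4^K$ whose formula~\eqref{eq:I30} couples three distinct tangential directions in a single expression. The boundary strip $\mathcal{N}_2$, where no partner is available, is a secondary nuisance to be dispatched in the style of Lemma~\ref{Lm:m}.
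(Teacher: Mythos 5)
Your proposal is correct and follows essentially the same route as the paper's proof: the same three-term splitting of $(u-\PiM u,u)$, the reduction of the leading term via Lemma~\ref{lm:morleyexpan} to element-wise constants $c_i^K$, the sign flip $c_i^{K_1}\approx -c_i^{K_2}$ from the opposite tangent orientations on a parallelogram pair combined with a Bramble--Hilbert bound on the difference of adjacent averages, and the $\sum_{K\in\mathcal{N}_2}|K|\lesssim h$ treatment of boundary triangles. In fact you supply more detail than the paper, which compresses the pairing and boundary steps into ``similar to the analysis in Lemma~\ref{Lm:m}'' and asserts without argument that $s_i$ is element-independent, a point your reflection-symmetry discussion justifies.
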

\begin{proof}
Note that 
\begin{equation}\label{eq:I32} 
I_3=\lambda ((I-\PiM)\Pit_h u, \Pi_h^0 u) + \lambda ((I-\PiM)(I-\Pit_h)u, \Pi_h^0 u) + \lambda ((I-\PiM)u, (I - \Pi_h^0)u),
\end{equation} 
\be\label{eq:I33}
\big |((I-\PiM)(I-\Pit_h)u, \Pi_h^0u)\big | + \big |((I-\PiM)u, (I - \Pi_h^0)u)\big |\lesssim h^4\|u\|_{4,\Om}^2.
\ee
It follows from \eqref{eq:I31}, \eqref{eq:I32} and \eqref{eq:I33} that
\begin{equation}\label{eq:I34}
\begin{split}
I_3=\lambda ((I-\PiM)\Pit_h u, \Pi_h^0 u) + \cO(h^4|u|_{4,\Om}^2)
=\lambda \sum_{K\in\mathcal{T}_h}\sum_{i=1}^4s_i (c_i , \Pi_K^0 u)_K  + \cO(h^4|u|_{4,\Om}^2).
\end{split}
\end{equation}
with $s_i= \Pi_K^0(I-\PiM)\phiM^i $ and $c_i$ defined in \eqref{eq:I30}. 
By the definition of $\phiM^i$ in \eqref{Mbase}, constant $s_i$ has the same value on different elements. 
Note that the directions of $ \bt_{i}$ for two adjacent elements are opposite. Thus, the definition of $c_i$ in \eqref{eq:I30} indicates 
$$
c_i|_{K_1} = -c_i|_{K_2} = \cO(h^3),
$$
where adjacent elements $K_1$ and $K_2$ forming a parallelogram. Similar to the analysis in Lemma \ref{Lm:m},
%\marginpar{  add some details to this estimate} 
$
\big |\sum_{i=1}^4s_i (c_i , \Pi_h^0 u)\big |\lesssim h^4\|u\|_{3, \Om}^2.
$
A substitution of this estimate into \eqref{eq:I34} leads to
\ben
|I_3|\lesssim h^4\|u\|_{4,\Om}^2,
\een
which completes the proof.
\end{proof}

Lemma \ref{lm:I1},  \ref{lm:I2},   \ref{lm:I3} show that the terms $I_1$, $I_2$ and $I_3$ are higher order terms. According to the decomposition of eigenvalues in Theorem \ref{Th:extraCR}, the error of eigenvalues equals to the interpolation error of the HHJ element in $L^2$ norm up to a higher order term. The following asymptotic expansion of eigenvalues of the Morley element comes from the combination of Lemma \ref{RT2h4}, \ref{lm:I1},  \ref{lm:I2},   \ref{lm:I3}  and Theorem \ref{Th:extraCR}. 

\begin{Th}\label{M:extra}
Suppose that $(\lambda , u )$ is the eigenpair of Problem \eqref{variance} with $u \in V\cap H^{5}(\Om,\mathbb{R})$, and $(\Mlam, \Mu)$ is the corresponding  eigenpair of Problem \eqref{discrete} by the Morley element on an uniform triangulation $\cT_h$. It holds that
\begin{equation*}
\begin{split}
\lambda-\Mlam= \frac1N F(u, \Om)|\Om| +\cO(h^4|\ln h|\|u\|_{5,\Om}^2).
\end{split}
\end{equation*}
with $N$ the number of elements on $\mathcal{T}_h$ and $F(u, \Om)$ defined in \eqref{def:F}.
\end{Th}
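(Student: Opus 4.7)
The proof is a direct combination of the decomposition already established and the four separate estimates on its constituents. My plan is as follows.

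First, I would invoke Theorem \ref{Th:extraCR} as the starting point, which gives the expansion
\begin{equation*}
\lambda-\Mlam=\|(I - \PiHHJ)\nabla^2 u\|_{0,\Om}^2 + 2I_1 + 2I_2 - 2I_3 + \cO(h^4|\ln h|\,|u|_{\frac{9}{2},\Om}^2),
\end{equation*}
where $I_1$, $I_2$, $I_3$ are the quantities defined in \eqref{crI}. Since $H^5(\Om,\R)$ is continuously embedded in $H^{9/2}(\Om,\R)$, the residual term is already of the desired form $\cO(h^4|\ln h|\|u\|_{5,\Om}^2)$.

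Next, I would substitute the four sharp estimates. For the dominant term, Lemma \ref{RT2h4} produces
\begin{equation*}
\|(I-\PiHHJ)\nabla^2 u\|_{0,\Om}^2 = \tfrac{1}{N} F(u,\Om)|\Om| + \cO(h^4\|u\|_{5,\Om}^2),
\end{equation*}
which is the principal contribution to the expansion. The three cross/remainder terms are controlled by Lemma \ref{lm:I1} ($|I_1|\lesssim h^4\|u\|_{3,\Om}^2$), Lemma \ref{lm:I2} ($|I_2|\lesssim h^4|\ln h|^{1/2}|u|_{3,\Om}^2$), and Lemma \ref{lm:I3} ($|I_3|\lesssim h^4\|u\|_{4,\Om}^2$), each of which is of order $h^4|\ln h|\|u\|_{5,\Om}^2$ at worst. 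Assembling these four contributions into the decomposition of Theorem \ref{Th:extraCR} yields the stated identity.

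There is essentially no obstacle at this stage of the argument, because every piece has been separately established; the proof amounts to a single line of arithmetic bookkeeping on the orders of the error terms. The real work went into proving the four ingredients: handling the lack of a superclose property for the Morley element via the HHJ equivalence (Theorem \ref{Th:extraCR}), achieving the optimal cancellation in $\|(I-\PiHHJ)\nabla^2 u\|_{0,\Om}^2$ through the refined interpolant of \eqref{eq:C0} and parallelogram cancellation (Lemma \ref{Lm:m}), exploiting the commuting property \eqref{commuting} to convert the first-order term $\sigma-\sHHJc$ into a second-order quantity in $\uHHJ$ (Lemma \ref{lm:I1}), using the weak continuity of $\VM$ to extract an extra factor of $h$ from the consistency term (Lemma \ref{lm:I2}), and exploiting the explicit Taylor expansion \eqref{eq:I31} of the Morley interpolation error together with the parallelogram cancellation to gain a full order in $I_3$ (Lemma \ref{lm:I3}). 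Once these are in hand, the final theorem follows immediately.
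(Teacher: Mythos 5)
Your proposal is correct and matches the paper exactly: the paper itself presents Theorem \ref{M:extra} as an immediate consequence of combining Theorem \ref{Th:extraCR} with Lemmas \ref{RT2h4}, \ref{lm:I1}, \ref{lm:I2}\creflastconjunction\ref{lm:I3}, with no further argument needed beyond the bookkeeping you describe.
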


The optimal convergence result of the extrapolation algorithm in the following theorem is  an immediate consequence of Theorem \ref{M:extra}. 
\begin{Th}\label{M}
Suppose that $\lambda$ is a simple eigenvalue of Problem \eqref{variance}
%$(\lambda , u )$ is the eigenpair of Problem \eqref{variance} with 
and a corresponding eigenfunction $u \in V\cap H^{5}(\Om,\mathbb{R})$. Let $(\Mlam, \Mu)$ be the corresponding  eigenpair of Problem \eqref{discrete} by the Morley element on an uniform triangulation $\cT_h$. It holds that
\begin{equation*}
\begin{split}
\big |\lambda - \lambda_{\rm M}^{\rm EXP}\big |\lesssim h^4|\ln h| \|u \|_{5, \Omega}^2.
\end{split}
\end{equation*}
with extrapolation eigenvalues $\lambda_{\rm M}^{\rm EXP}$ defined in \eqref{exmethod}.
\end{Th}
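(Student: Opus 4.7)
The proof will be essentially a direct consequence of Theorem \ref{M:extra} combined with elementary algebra, so the plan is short but I should be careful about how the leading term scales with $h$.

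First, I would apply Theorem \ref{M:extra} separately on the meshes $\cT_h$ and $\cT_{2h}$. On a uniform triangulation of a fixed convex polygonal domain $\Om$, every triangle has area comparable to $h^2$, so the number $N$ of elements satisfies $|\Om|/N = c_\Om h^2$ for some constant $c_\Om$ depending only on the reference triangle shape and $\Om$, not on $h$ itself. In particular $N_{2h} = N_h/4$, hence $|\Om|/N_{2h} = 4\,|\Om|/N_h$. Combining with the fact, already established in Lemma \ref{lm:gammaconstant} and in the definition \eqref{def:F}, that $F(u,\Om)$ is independent of the mesh, Theorem \ref{M:extra} yields
\begin{equation*}
\lambda - \Mlam^{h} = C_0\,h^2 + R_h, \qquad
\lambda - \Mlam^{2h} = 4\,C_0\,h^2 + R_{2h},
\end{equation*}
where $C_0 = c_\Om F(u,\Om)$ is independent of $h$ and the remainders satisfy $|R_h|+|R_{2h}|\lesssim h^4|\ln h|\,\|u\|_{5,\Om}^2$ (the bound on $R_{2h}$ uses $2h \asymp h$).

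Next, since the eigenfunction is smooth the convergence rate is $\alpha = 2$, so the extrapolation rule \eqref{exmethod} reads
\begin{equation*}
\lambda_{\rm M}^{\rm EXP} = \frac{4\,\Mlam^{h} - \Mlam^{2h}}{3}.
\end{equation*}
A direct substitution gives
\begin{equation*}
\lambda - \lambda_{\rm M}^{\rm EXP}
= \frac{4(\lambda - \Mlam^{h}) - (\lambda - \Mlam^{2h})}{3}
= \frac{4\,C_0 h^2 - 4\,C_0 h^2}{3} + \frac{4 R_h - R_{2h}}{3},
\end{equation*}
so that the $\cO(h^2)$ principal part cancels exactly, and only the higher-order remainders survive. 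The triangle inequality then delivers $|\lambda - \lambda_{\rm M}^{\rm EXP}|\lesssim h^4|\ln h|\,\|u\|_{5,\Om}^2$, which is the claim.

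There is no real obstacle here, since Theorem \ref{M:extra} already does all the analytic work (the delicate cancellations in the $I_1, I_2, I_3$ terms and the sharp expansion of $\|(I-\PiHHJ)\nabla^2 u\|_{0,\Om}^2$). The only substantive point one must check is that the dominant $h^2$ term on $\cT_h$ and on $\cT_{2h}$ has the \emph{same} coefficient up to the factor $4$ dictated by $\alpha=2$; this is why it is crucial that $F(u,\Om)$ be mesh-independent (established in Lemma \ref{lm:gammaconstant}) and that the uniformity of the triangulation forces the exact scaling $|\Om|/N_{2h} = 4\,|\Om|/N_h$. The simplicity of $\lambda$ is used only to identify the single discrete eigenvalue $\Mlam^h$ (and $\Mlam^{2h}$) that is compared against $\lambda$ via \eqref{M:est}.
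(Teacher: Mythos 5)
Your proposal is correct and follows exactly the route the paper intends: the paper states Theorem \ref{M} as an immediate consequence of Theorem \ref{M:extra}, and your argument — applying the expansion on $\cT_h$ and $\cT_{2h}$, noting $|\Om|/N \asymp h^2$ with mesh-independent $F(u,\Om)$ so the leading terms scale exactly by the factor $4$, and cancelling them via the $\alpha=2$ extrapolation formula — is precisely the computation being left implicit. Your remarks on the role of Lemma \ref{lm:gammaconstant} and of the simplicity of $\lambda$ are also consistent with the paper's Remark following the theorem.
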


\begin{remark}
 If $\lambda$ is a multiple eigenvalue, eigenfunctions $\Mu$ on triangulations with different mesh size may approximate to different functions $u\in M(\lambda)$. Then, the asymptotic expansion of eigenvalue $\Mlam$ in Theorem \ref{M:extra} cannot lead to a theoretical estimate of $\lambda_{\rm M}^{\rm EXP}$ in \eqref{exmethod}. Some numerical tests in Section \ref{sec:numerical} show that extrapolation method can also  improve the accuracy of multiple eigenvalues  to $\mathcal{O}(h^4)$ if the eigenfunction is smooth enough.
\end{remark}

\section{Asymptotically exact a posterior error estimate and postprocessing scheme}\label{sec:post}

In this section, we establish and analyze an asymptotically exact a posterior error estimate of eigenvalues by the Morley element. This estimate gives a high accuracy postprocessing scheme of eigenvalues.

To begin with, we introduce a gradient recovery technique \cite{brandts1994superconvergence,Hu2016Superconvergence}.
Define the discrete space
\begin{equation*}
{\rm CR }(\mathcal{T}_h, \S):=\big \{v\in L^2(\Om,\S): v|_K\in P_1(K, \S)\text{ for any }  K\in\cT_h, \int_e [v]\ds =0\text{ for any }  e\in \cE_h^i\big \}.
\end{equation*}
For any function $v\in {\rm CR }(\mathcal{T}_h, \S)$, it is known that each entry of $v$ is uniquely determined by its value at the midpoint of edges, so does the function $v$ itself. Given $\textbf{q}~\in~\SHHJ$, define $K_h \textbf{q}|_K\in {\rm CR }(\mathcal{T}_h, \S)$ as follows.

\begin{Def}\label{Def:R}
1.For each interior edge $e\in\cE_h^i$, the elements $K_e^1$ and $K_e^2$ are the pair of elements sharing $e$. Then the value of $K_h \textbf{q}$ at the midpoint $\textbf{m}_e$ of $e$ is
$$
K_h \textbf{q}(\textbf{m}_e)={1\over 2}(\textbf{q}|_{K_e^1}(\textbf{m}_e)+\textbf{q}|_{K_e^2}(\textbf{m}_e)).
$$
%$$
%K_h \textbf{q}(\textbf{m}_e)=w_1\textbf{q}|_{K_e^1}(\textbf{m}_e)+w_2\textbf{q}|_{K_e^2}(\textbf{m}_e)
%$$
%with $w_1={|K_e^1| \over |K_e^1| + |K_e^2|}$ and $w_2={|K_e^2| \over |K_e^1| + |K_e^2|}$.

2.For each boundary edge $e\in\cE_h^b$, let $K$ be the element having $e$ as an edge, and $K'$ be an element sharing an edge $e'\in\cE_h^i$ with $K$. Let $e''$ denote the edge of $K'$ that does not intersect with $e$, and $\textbf{m}$, $\textbf{m}'$ and $\textbf{m}''$ be the midpoints of the edges $e$, $e'$ and $e''$, respectively. Then the value of $K_h \textbf{q}$ at the point $\textbf{m}$ is
$$
K_h \textbf{q}(\textbf{m})=2K_h \textbf{q}(\textbf{m}') - K_h \textbf{q}(\textbf{m}'').
$$
%$$
%K_h \textbf{q}(\textbf{m})=w'K_h \textbf{q}(\textbf{m}')-w''K_h \textbf{q}(\textbf{m}'')
%$$
%with $w'={|K| + |K'| \over |K|}$ and $w''={|K'| \over |K|}$.
\begin{center}
\begin{tikzpicture}[xscale=2,yscale=2]
\draw[-] (-0.5,0) -- (2,0);
\draw[-] (0,0) -- (0.5,1);
\draw[-] (0.5,1) -- (2,1);
\draw[-] (0.5,1) -- (1.5,0);
\draw[-] (2,1) -- (1.5,0);
\node[below, right] at (1,0.5) {\textbf{m}'};
\node[above] at (1.25,1) {\textbf{m}''};
\node[below] at (0.75,0) {\textbf{m}};
\node at (0.7,0.4) {K};
\node at (1.4,0.75) {K'};
\node at (1,0) {e};
\node at (1.4,0.2) {e'};
\node at (1.7,1) {e''};
\node at (0.3,-0.1) {$\partial \Om $};
\fill(1,0.5) circle(0.5pt);
\fill(1.25,1) circle(0.5pt);
\fill(0.75,0) circle(0.5pt);
\end{tikzpicture}
\end{center}
\end{Def}

%The above post-processing mechanism modifies the one in \cite{brandts1994superconvergence}, and takes weighted-average

The Morley element solution for source problems admits a first order superconvergence on uniform triangulations \cite{hu2021optimal}. According to Lemma \ref{supereig1}, the eigenfunction $ \Mu$ is superclose to  the Morley element solution for a corresponding source problem.
These two facts lead to the following superconvergence result on uniform triangulations
\begin{equation}\label{CRsuper}
\parallel \nabla^2 u - K_h \nabla_h^2 \Mu \parallel_{0,\Om}\lesssim h^2|\ln h|^{1/2} |u|_{\frac{9}{2},\Om},
\end{equation}
provided that $u\in V\cap H^{\frac{9}{2}}(\Om,\mathbb{R})$.  
%{\color{blue} 
%Similar to the analysis for Lemma 2.2 in \cite{hu2019asymptotic}, it follows from the inverse equality and the superconvergence property \eqref{CRsuper} that
%\begin{equation} \label{highorderestimate}
%\begin{aligned}
%\| \nabla^{3} u -\nabla_h K_h\nabla_h^2 \Mu \|_{0,\Om}=&\| \nabla^{3} u - \nabla_h^3 \Pi_h^3 u\|_{0,\Om} + \| \nabla_h^{3} \Pi_h^3 u -\nabla_h K_h\nabla_h^2 \Mu \|_{0,\Om}
%\\
%\lesssim & h|u|_{4,\Omega} + h^{-1}(\|\nabla_h^2 \Pi_h^3 u - \nabla^2 u\|_{0,\Omega} + \|\nabla^2 u  - K_h \nabla_h^2 \Mu\|_{0,\Om})
%\\
%\lesssim & h|\ln h|^{1/2}|u |_{\frac{9}{2},\Om},
%\end{aligned}
%\end{equation}
%provided that $u\in V\cap H^{\frac{9}{2}}(\Om,\mathbb{R})$.
%}

%Following the analysis for Lemma 2.2 in \cite{hu2019asymptotic},this superconvergence property \eqref{CRsuper} leads to the following error estimate for third order derivatives of eigenfunctions, namely,
%\begin{equation}\label{highorderestimate}
%\parallel \nabla^{3} u -\nabla_h K_h\nabla_h^2 \Mu \parallel_{0,\Om}\leq C  h|\ln h|^{1/2}|u |_{\frac{9}{2},\Om},
%\end{equation}
%provided that $u\in V\cap H^{\frac{9}{2}}(\Om,\mathbb{R})$.

Define the following asymptotically exact a posteriori error estimate
\begin{equation}\label{FdefineCR}
F_{\rm M}=\parallel K_h\nabla_h^2 \Mu  -\nabla_h^2 \Mu\parallel_{0,\Om}^2.
%\qquad  F_{\rm M}^2= F_{\rm M}^1- 2\Mlam \sum_{K\in\cT_h} \int_K P_C(\nabla_h K_h\nabla_h^2 \Mu ) \Mu\dx.
\end{equation}
Given a discrete eigenvalue $\Mlam$ of the Morley element, the postprocessing scheme computes a new approximate eigenvalue
\begin{equation}\label{lam:F}
\lambda_{\rm M}^{\rm R}=\Mlam + F_{\rm M}.
%,\qquad \lambda_{\rm M}^{\rm F, 2}=\Mlam + F_{\rm M}^2.
\end{equation} 
By the expansion \eqref{commutId}, a similar analysis to the one in \cite{hu2019asymptotic} leads to the following theorem.
\begin{Th}\label{Th:m}
Let $(\lambda , u )$ be an eigenpair of \eqref{variance} with $u \in V\cap H^{\frac{9}{2}}(\Om,\mathbb{R})$, and $(\Mlam,\Mu)$ be the corresponding approximate eigenpair of \eqref{discrete} in $\VM$. The new eigenvalue $\lambda_{\rm M}^{\rm R}$ by the postprocessing scheme satisfies
$$
|\lambda - \lambda_{\rm M}^{\rm R}| 
%+ |\lambda - \lambda_{\rm M}^{\rm F, 2}|
\lesssim h^3|\ln h| |u |_{{9\over 2}, \Omega}^2,
$$
and the accuracy of the asymptotically exact a posterior error estimates $F_M$ is $\mathcal{O}(h^3|\ln h|)$.
\end{Th}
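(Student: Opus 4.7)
The plan is to start from the identity \eqref{commutId} for $\lambda-\Mlam$ and recast its dominant term $\|\nabla_h^2(u-\Mu)\|_{0,\Om}^2$ in terms of the computable quantity $F_{\rm M}$ plus error terms of order $h^3|\ln h|$, while the interpolation remainder $\lambda(u-\PiM u, u)$ is shown to be of higher order on uniform meshes.

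First I would rewrite \eqref{commutId} as
\begin{equation*}
\lambda-\Mlam-F_{\rm M}=\bigl(\|\nabla_h^2(u-\Mu)\|_{0,\Om}^2-F_{\rm M}\bigr)-2\lambda(u-\PiM u,u)+\cO(h^4|u|_{3,\Om}^2),
\end{equation*}
so that bounding the two bracketed pieces gives both claims simultaneously (the accuracy of $F_{\rm M}$ and the accuracy of $\lambda_{\rm M}^{\rm R}$). Next I would decompose
\begin{equation*}
\nabla^2 u-\nabla_h^2\Mu=(\nabla^2 u-K_h\nabla_h^2\Mu)+(K_h\nabla_h^2\Mu-\nabla_h^2\Mu)
\end{equation*}
and expand the $L^2$-norm squared. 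This produces $F_{\rm M}$ itself, a higher-order squared term $\|\nabla^2 u-K_h\nabla_h^2\Mu\|_{0,\Om}^2=\cO(h^4|\ln h|)$ by \eqref{CRsuper}, and a cross term handled by Cauchy--Schwarz. Using \eqref{CRsuper} for the first factor and the triangle inequality together with the standard estimate \eqref{M:est} for the second factor (yielding $\|K_h\nabla_h^2\Mu-\nabla_h^2\Mu\|_{0,\Om}\lesssim h$), the cross term is bounded by $h^2|\ln h|^{1/2}\cdot h=h^3|\ln h|^{1/2}$, which is the dominant contribution to the error.

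To dispose of the remaining interpolation term $\lambda(u-\PiM u, u)$, I would invoke Lemma \ref{lm:I3}, which on uniform triangulations provides $|I_3|\lesssim h^4\|u\|_{4,\Om}^2$; this is of higher order than $h^3|\ln h|$ and can be absorbed into the error. Combining all pieces yields
\begin{equation*}
|\lambda-\Mlam-F_{\rm M}|\lesssim h^3|\ln h|\,|u|_{\frac{9}{2},\Om}^2,
\end{equation*}
which simultaneously establishes the asymptotic exactness of $F_{\rm M}$ and the error estimate for $\lambda_{\rm M}^{\rm R}=\Mlam+F_{\rm M}$.

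The main obstacle is the cross term $(\nabla^2 u-K_h\nabla_h^2\Mu,\,K_h\nabla_h^2\Mu-\nabla_h^2\Mu)$: since $\|K_h\nabla_h^2\Mu-\nabla_h^2\Mu\|_{0,\Om}$ is only of first order in $h$, the whole argument hinges on the superconvergence estimate \eqref{CRsuper} to drop the factor on the first slot down to $h^2|\ln h|^{1/2}$. Without that recovery-based superconvergence, only $\cO(h^2)$ accuracy would follow, which is exactly the accuracy already available from the raw Morley approximation. Everything else in the proof is a Cauchy--Schwarz bookkeeping step or a direct application of the results already established earlier in the paper.
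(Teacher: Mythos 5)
Your proposal is correct and follows essentially the same route as the paper: both start from the expansion \eqref{commutId} and replace $\|\nabla_h^2(u-\Mu)\|_{0,\Om}^2$ by $F_{\rm M}$ at a cost of $\cO(h^3|\ln h|^{1/2})$ via the recovery superconvergence \eqref{CRsuper} and Cauchy--Schwarz. The only (immaterial) difference is that you bound the interpolation term by the sharper Lemma \ref{lm:I3}, whereas the paper simply uses $|(u-\PiM u,u)|\lesssim h^3\|u\|_{3,\Om}$ from \eqref{M:est}, which already suffices for the stated $h^3|\ln h|$ rate.
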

\begin{proof}
Recall the expansion \eqref{eq:deco0} of eigenvalues
$$
\lambda-\Mlam =\|\nabla_h^2 (u - \Mu)\|_{0, \Om}^2-2\lambda (u-\PiM u, u ) + \cO(h^4|u|_{3, \Om}^2). 
$$ 
By the definition of $\lambda_{\rm M}^{\rm R}$ in \eqref{lam:F},
\begin{equation}\label{lamFerr}
\lambda - \lambda_{\rm M}^{\rm R} = \|\nabla_h^2 (u - \Mu)\|_{0, \Om}^2  - \|K_h \nabla_h^2 \Mu - \nabla_h^2 \Mu\|_{0, \Om}^2-2\lambda (u-\PiM u, u ) + \cO(h^4|u|_{3, \Om}^2).
\end{equation}
It follows from \eqref{CRsuper} and the Cauchy Schwarz inequality that
\begin{equation}\label{F1est}
\left |\|\nabla_h^2 (u - \Mu)\|_{0, \Om}^2 - \|K_h \nabla_h^2 \Mu - \nabla_h^2 \Mu\|_{0, \Om}^2\right|\lesssim h^3|\ln h|^{1/2} |u|_{\frac{9}{2},\Om}.
\end{equation}
It follows \eqref{M:est} that $|(u-\PiM u, u )|\lesssim h^3\|u\|_{3,\Om}$. A substitution of this estimate and \eqref{F1est} into \eqref{lamFerr} yields 
$$
|\lambda - \lambda_{\rm M}^{\rm R}|\lesssim h^3|\ln h| |u |_{{9\over 2}, \Omega}^2,
$$
%Recall the Taylor expansion in \eqref{eq:I31},
%\begin{equation}\label{identity:cr} 
%(I-\PiM)w=P_C(\nabla^3 w)
%\end{equation}
%for any cubic function $w\in P_3(K, \R)$. Then the superconvergence result \eqref{highorderestimate} implies that
%\begin{equation}
%\left| \sum_{K\in\cT_h} \int_K P_C(\nabla_h K_h\nabla_h^2 \Mu ) \Mu\dx -  ((I-\PiM) u, \Mu ) \right|\lesssim h^3|\ln h|^{1/2} |u|_{\frac{9}{2},\Om}.
%\end{equation}
%By \eqref{M:est},
%$\displaystyle 
%\left|((I-\PiM) u, \Mu -u) \right|\lesssim h^5\|u\|_{3,\Om}^2.
%$
%Thus,
%$$
%|\lambda - \lambda_{\rm M}^{\rm F, 2}|\lesssim h^3|\ln h| |u |_{{9\over 2}, \Omega}^2,
%$$
which completes the proof.
\end{proof}

\section{Numerical examples}\label{sec:numerical}
This section presents some numerical tests to confirm the theoretical analysis in the previous sections. 
\subsection{Example 1 (simply support plate and clamped plate)}
We consider the plate problem \eqref{variance} on the unit square $\Om=(0,1)^2$.  The initial mesh $\cT_1$ consists of two right triangles, obtained by cutting the unit square with a north-east line. Each mesh $\cT_i$ is refined into a half-sized mesh uniformly, to get a higher level mesh $\cT_{i+1}$.
 
\paragraph{Simply support plate}
Consider the simply support plate with boundary condition $u={\partial^2 u\over \partial n^2} =0$.
It is known that the first eigenvalue of this problem is $\lambda_1 =4\pi^4$, and the convergence rate of approximate eigenvalues by the Morley element is $\alpha=2$. Fig.~\ref{fig:squareMorley} plots the errors of eigenvalues $\Mlam$,  $\lambda_{\rm M}^{\rm EXP}$ and $\lambda_{\rm M}^{\rm R}$ by the Morley element, the corresponding extrapolation method and the postprocessing technique, respectively. The convergence rate of eigenvalues by the Morley element is improved remarkably from second order to fourth order, which verifies the analysis in Theorem \ref{M} and Theorem \ref{Th:m}. Fig.~\ref{fig:squareMorley} also shows that the postprocessing technique has a better performance than the extrapolation method, although the theoretical convergence rate is smaller than the extrapolation method.

\begin{figure}[!ht]
\setlength{\abovecaptionskip}{0pt}
\setlength{\belowcaptionskip}{0pt}
\centering
\includegraphics[width=10cm,height=8cm]{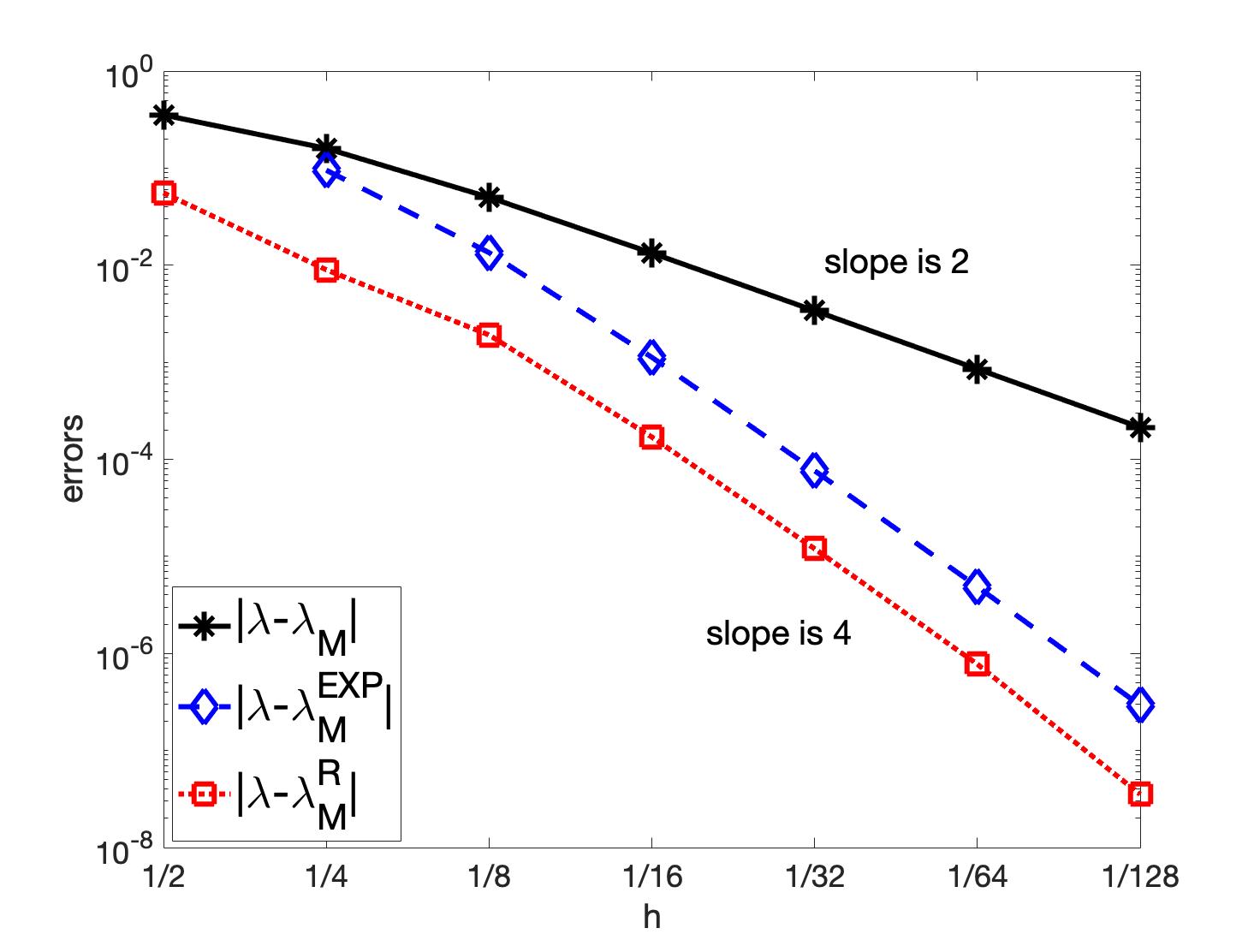}
\caption{\footnotesize{The relative errors of  the extrapolation eigenvalues  for the simply support plate in Example 1.}}
\label{fig:squareMorley}
\end{figure}

Among the smallest six eigenvalues, it is known that $\lambda_2=\lambda_3$ and $\lambda_5=\lambda_6$. 
Table~\ref{tab:multiple} lists the relative errors of eigenvalues by the Morley element for these multiple eigenvalues.  
It shows that the extrapolation method also improves the convergence rate of multiple eigenvalues to a rate of 4.
\renewcommand\arraystretch{1.5}
\begin{table}[ht!]
  \centering 
    \begin{tabular}{c|c|c|c|c|c|c|c}
\hline
          &   $\cT_2$    &  $\cT_3$     &   $\cT_4$    &    $\cT_5$   &   $\cT_6$    &  $\cT_7$     &  rate \\\hline
$\lambda_2$    & 2.55E-01 & 6.95E-02 & 8.39E-03 & 6.54E-04 & 4.35E-05 & 2.76E-06& 3.98 \\\hline
$\lambda_3$   & 2.34E-01 & 6.05E-02 & 7.20E-03 & 5.55E-04 & 3.68E-05 & 2.33E-06 & 3.98 \\\hline
$\lambda_5$    & 4.63E-01 & 1.54E-01 & 2.81E-02 & 2.64E-03 & 1.87E-04 & 1.21E-05 &  3.95\\\hline
$\lambda_6$    & 4.63E-01 & 1.55E-01 & 2.82E-02 & 2.65E-03 & 1.87E-04 & 1.21E-05 &  3.95\\\hline
    \end{tabular}%
\caption{\footnotesize The relative errors of eigenvalues by the extrapolation method for the Morley element in Example 1.}
  \label{tab:multiple}
\end{table}%
 
\begin{figure}[!ht]
\setlength{\abovecaptionskip}{0pt}
\setlength{\belowcaptionskip}{0pt}
\centering
\includegraphics[width=10cm,height=8cm]{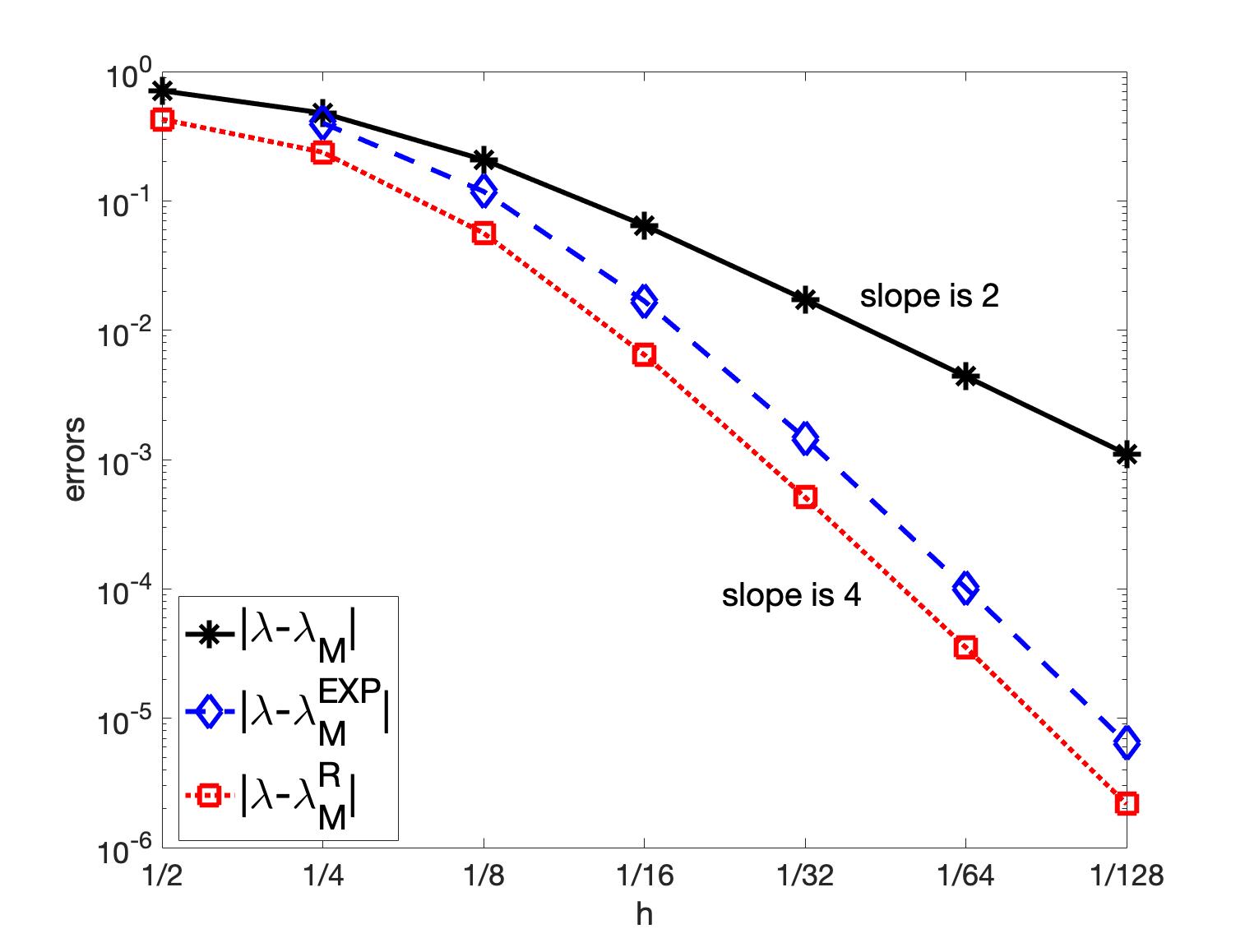}
\caption{\footnotesize{The relative errors of  the extrapolation eigenvalues for clamped plate in Example 1.}}
\label{fig:clamped}
\end{figure}

\paragraph{Clamped plate}
Consider the clamped plate with boundary condition $u={\partial u\over \partial n}=~0$.
The sum of the eigenvalue by the Morley element on the mesh $\cT_{11}$ and the corresponding a posteriori error estimate in \eqref{FdefineCR} is taken as the reference eigenvalue for this example. Fig.~\ref{fig:clamped} plots the relative errors of the first eigenvalues  $\Mlam$,  $\lambda_{\rm M}^{\rm EXP}$ and $\lambda_{\rm M}^{\rm R}$. It shows that both the extrapolation method and the postprocessing technique can improve the accuracy of eigenvalues on the clamped plate to order 4, which verifies the analysis in Theorem \ref{M} and Theorem \ref{Th:m}. Similar to the results in Example 1 for simply supported plate, Fig.~\ref{fig:clamped} implies that the accuracy of eigenvalues by the postprocessing technique is better than that of eigenvalues by the extrapolation method.

%\begin{table}[htbp]
%\footnotesize
%  \centering
%    \begin{tabular}{c|c|c|c|c|c|c|c|c}
%\hline
%          &  \multicolumn{2}{c|}{$\lambda_1$}   &    \multicolumn{2}{c|}{$\lambda_2$}    &   \multicolumn{2}{c|}{$\lambda_3$}    &    \multicolumn{2}{c}{$\lambda_4$}  \\ \hline
%          &$\lambda_{\rm M}^{\rm F}$ &  $\lambda_{\rm M}^{\rm EXP}$  &$\lambda_{\rm M}^{\rm F}$  & $\lambda_{\rm M}^{\rm EXP}$ &$\lambda_{\rm M}^{\rm F}$  & $\lambda_{\rm M}^{\rm EXP}$  & $\lambda_{\rm M}^{\rm F}$  &  $\lambda_{\rm M}^{\rm EXP}$ \\ \hline
% $\cT_3$  & 2.32E-01 & 4.00E-01 & 3.56E-01 & 5.61E-01 & 3.96E-01 & 5.65E-01 & 3.55E-01 & 5.70E-01 \\ \hline
% $\cT_4$ & 5.53E-02 & 1.18E-01 & 1.16E-01 & 2.25E-01 & 1.15E-01 & 2.18E-01 & 1.38E-01 & 2.53E-01 \\ \hline
% $\cT_5$ & 6.35E-03 & 1.68E-02 & 1.60E-02 & 4.23E-02 & 1.54E-02 & 4.09E-02 & 2.49E-02 & 5.98E-02 \\ \hline
% $\cT_6$ & 5.05E-04 & 1.46E-03 & 1.35E-03 & 4.24E-03 & 1.28E-03 & 4.06E-03 & 2.39E-03 & 6.81E-03 \\ \hline
% $\cT_7$ & 3.49E-05 & 1.02E-04 & 9.39E-05 & 3.10E-04 & 8.89E-05 & 2.95E-04 & 1.76E-04 & 5.27E-04 \\ \hline
% $\cT_8$  & 2.22E-06 & 6.50E-06 & 6.10E-06 & 2.02E-05 & 5.78E-06 & 1.92E-05 & 1.17E-05 & 3.50E-05 \\ \hline 
%    rate      & 4.80  &  4.22 & 4.11 &4.02  & 4.11&   4.02& 4.06 &  4.01\\ \hline
%    \end{tabular}%
%  \caption{The relative errors of eigenvalues for Example 2.}
%  \label{tab:clamped}%
%\end{table}%

\subsection{Example 2 (Piecewise uniform mesh)}
Consider a model problem \eqref{variance}  with $V= H_0^2(\Om)$ on the domain $\Om$ shown in Fig.~\ref{fig:five}, where the coordinates of $A_1$ to $A_5$ are
$(2,0)$, $(1,-2)$, $(-1,-2)$, $(-2,0)$, $(0,2)$, respectively. The minimal and maximal angle of $\Omega$ are $90^{\circ}$ and  $121.3^{\circ}$, respectively. Fig.~\ref{fig:five} plots the initial mesh $\mathcal{T}_1$ and each mesh $\cT_i$ is refined into a half-sized mesh uniformly, to get a higher level mesh $\cT_{i+1}$.
The eigenvalue by the Aygris element on $\cT_8$ is taken as the reference eigenvalue for this example. 

\begin{figure}[H] \setlength\unitlength{1pt}
	\begin{center}
		\begin{tikzpicture}
		\draw (0,2)--(2,0);
		\draw  (2,0)--(1,-2);
		\draw  (1,-2)--(-1,-2);
		\draw  (-1,-2)--(-2,0);
		\draw  (-2,0)--(0,2);
		\draw (0,0)--(2,0);
		\draw  (0,0)--(1,-2);
		\draw  (0,0)--(-1,-2);
		\draw  (0,0)--(-2,0);
		\draw  (0,0)--(0,2);
		
		\draw[fill] (0,0) circle [radius=0.05];
		\node[below] at (0,0)  {$A_0$};
		\draw[fill] (2,0) circle [radius=0.05];
		\node[right] at (2,0)  {$A_1$};
		\draw[fill] (1,-2) circle [radius=0.05];
		\node[below] at (1,-2)  {$A_2$};
		\draw[fill] (-1,-2) circle [radius=0.05];
		\node[below] at (-1,-2)  {$A_3$};
		\draw[fill] (-2,0) circle [radius=0.05];
		\node[left] at (-2,0)  {$A_4$};
		\draw[fill] (0,2) circle [radius=0.05];
		\node[above] at (0,2)  {$A_5$};
		%		\draw[->][thick] (0.66,0.66*1.732)--(0,0.66/1.732+0.66*1.732);
		%\draw [ultra thick] (0.30,0.3*1.732) -- (0.60,0.6*1.732);
		\end{tikzpicture}
		\label{fig:five}
		\caption{The computational domain of Example 2.}
	\end{center}
\end{figure}
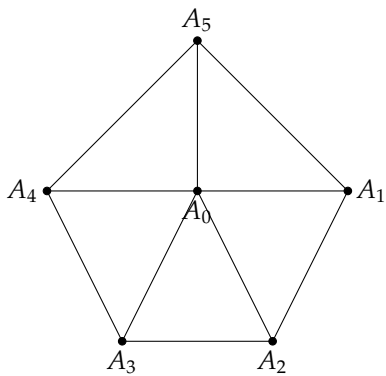

Table \ref{tab:5} indicates an almost fourth order convergence rate of extrapolation eigenvalues $\Mlam^{\rm EXP}$ in \eqref{exmethod} with $\alpha=2$  and postprocessed eigenvalues $\Mlam^{\rm R}$ in \eqref{FdefineCR}. 
Although the meshes are no longer uniform, we can still observe the optimal estimate for the asymptotic expansion of the Morley element on such piecewise uniform meshes. 
It implies that both the extrapolation method and the postprocessing technique are effective on piecewise uniform meshes. Similar to Example~1, the accuracy of eigenvalues by the postprocessing technique is slightly better than that of eigenvalues by the extrapolation method.   

\begin{table}[htbp]
	\footnotesize
	\centering 
	\begin{tabular}{c|c|c|c|c|c|c}
		\hline
		&    $\cT_3$ &  $\cT_4$ &  $\cT_5$ &  $\cT_6$ &  $\cT_7$ &   $order$  \\\hline
		$\lambda_{\rm M}$  & 9.91E-01 & 2.71E-01 & 6.95E-02 & 1.75E-02 &4.38E-03 &1.99 \\\hline
		$\lambda_{\rm M}^{\rm R}$  &1.51E-01 & 1.37E-02 & 1.05E-03 & 7.95E-05 & 6.38E-06&3.64  \\\hline
		$\lambda_{\rm M}^{\rm EXP}$ & 3.21E-01 & 3.11E-02 & 2.28E-03 & 1.52E-04 & 9.81E-06 & 3.95 \\\hline
	\end{tabular}
	\caption{The relative errors of eigenvalues for Example 2.}
	\label{tab:5}%
\end{table}%

\subsection{Example 3 (Cracked domain)}
\begin{figure}[!ht]
\setlength{\abovecaptionskip}{0pt}
\setlength{\belowcaptionskip}{0pt}
\centering
\includegraphics[width=5cm,height=4cm]{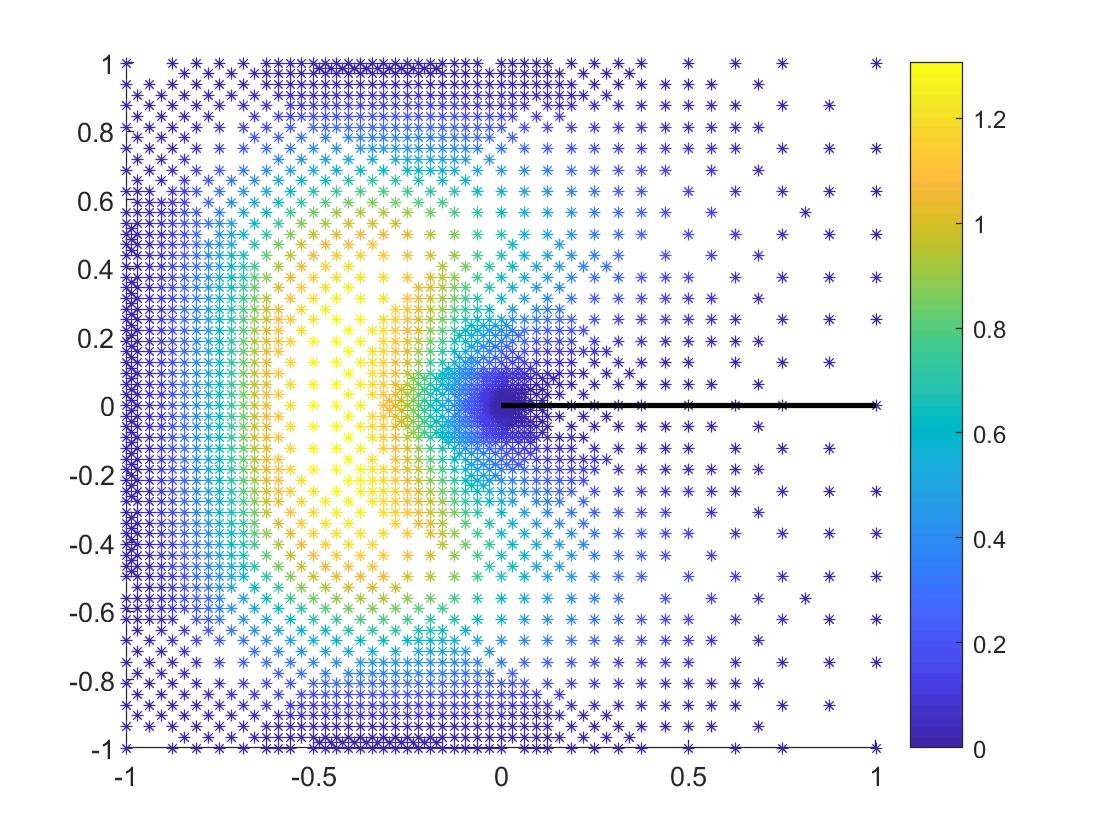}
\includegraphics[width=5cm,height=4cm]{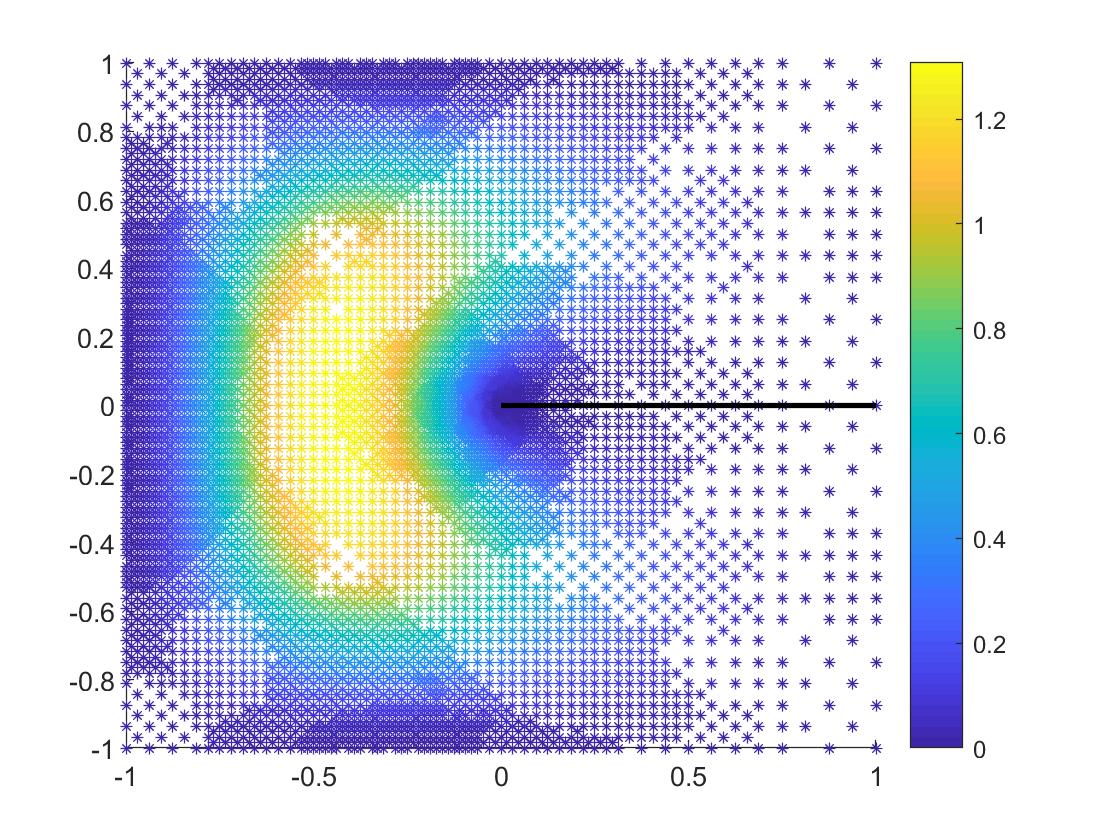}
\includegraphics[width=5cm,height=4cm]{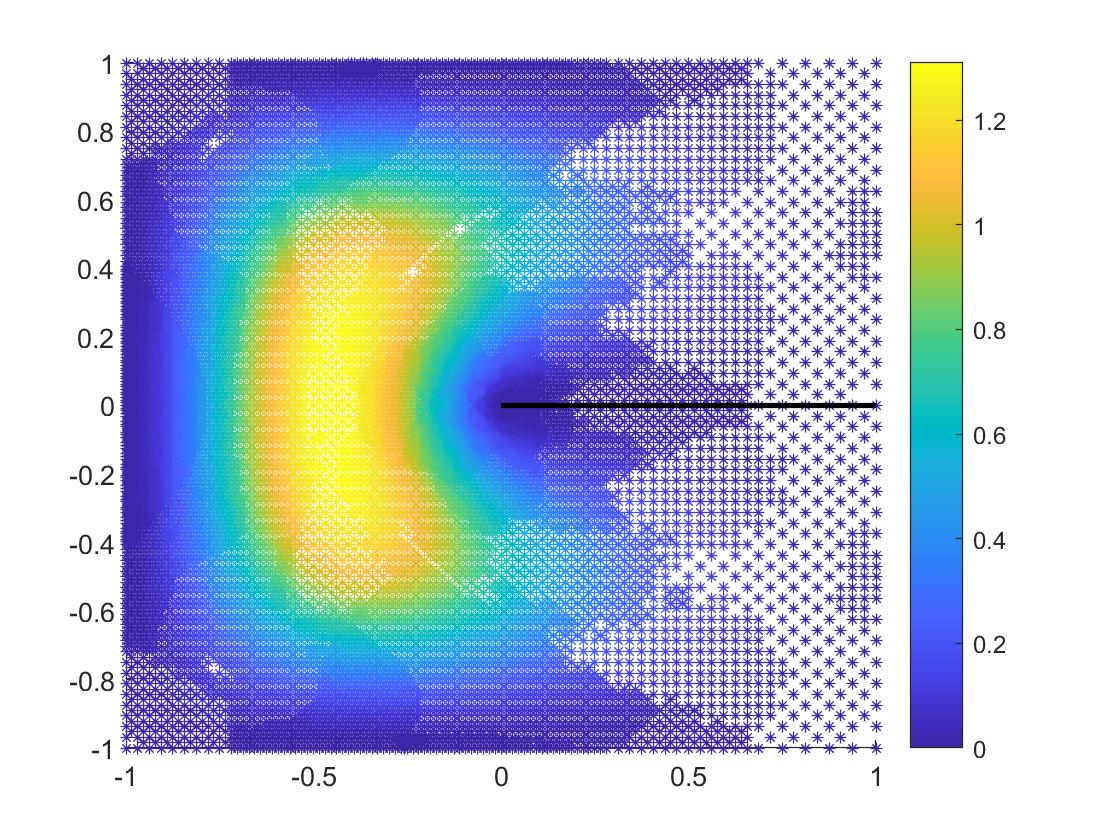} 
\includegraphics[width=5cm,height=4cm]{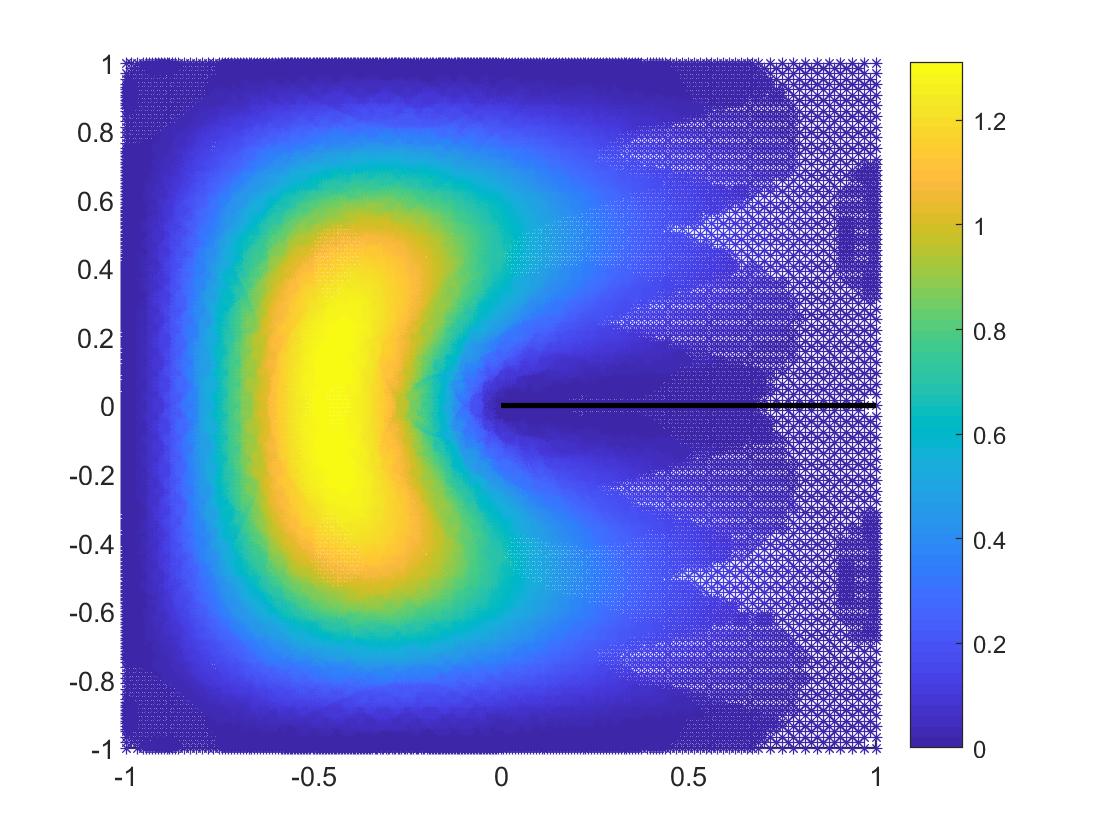}
\caption{\footnotesize{ The first eigenfunctions on adaptive triangulations  for Example 3 with $10246$, $28909$, $76779$ and $459901$ d.o.f, respectively.}}
\label{fig:crack}
\end{figure}

  Consider the model problem \eqref{variance}  on a square domain with a crack 
  $$
  \Omega=(-1,1)^2/[0,1]\times \{0\}.
  $$ 
The boundary condition is $u={\partial u\over \partial n} =0$.
Let $\cT_0$ consist of two right triangles, obtained by cutting the domain $(-1,1)^2$ with a north-east line. Each mesh $\cT_i$ is refined into a half-sized mesh uniformly, to get a higher level mesh $\cT_{i+1}$. Let $\cT_1$ be the initial mesh.

\begin{figure}[!ht]
	\setlength{\abovecaptionskip}{0pt}
	\setlength{\belowcaptionskip}{0pt}
	\centering
	\includegraphics[width=8cm,height=6cm]{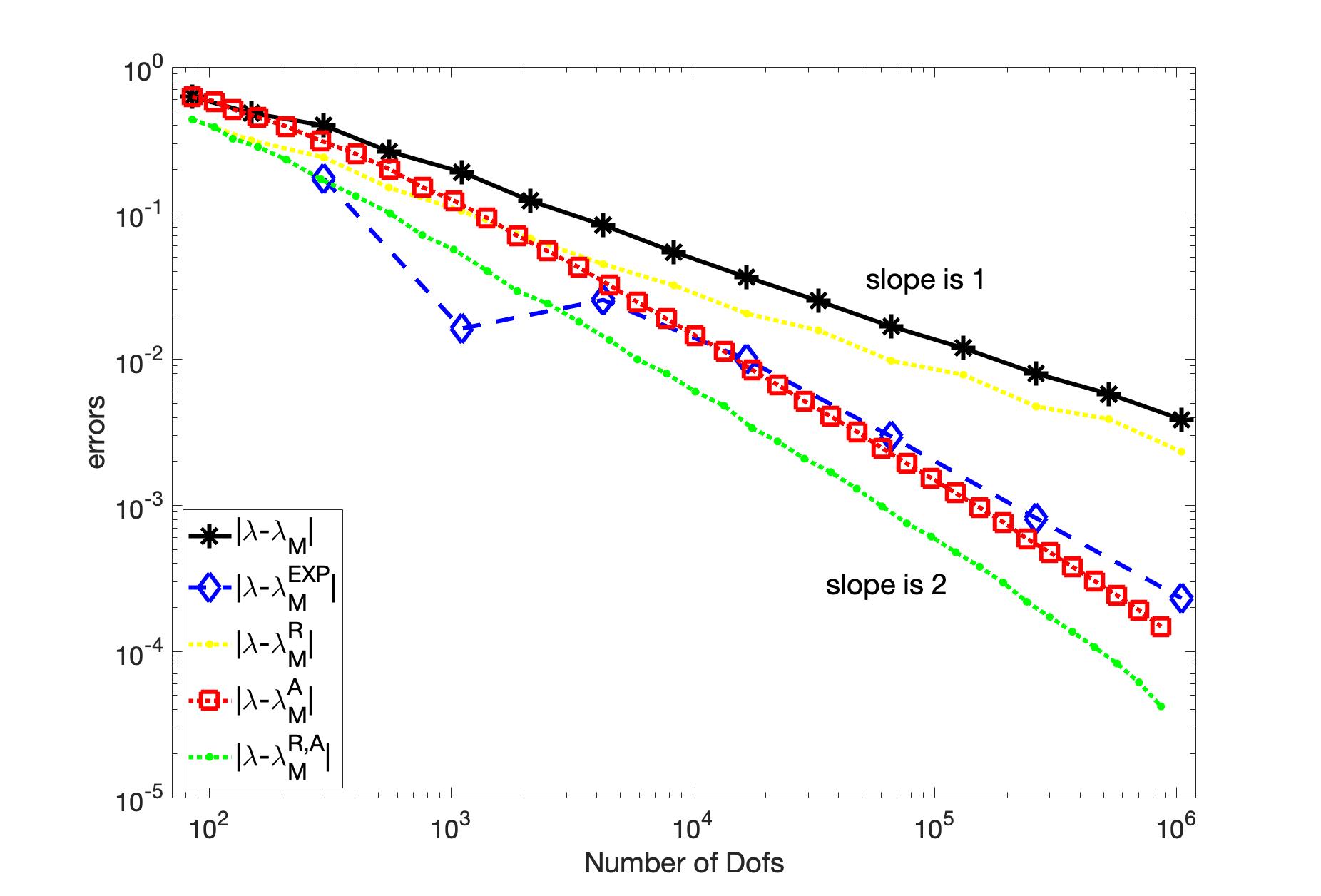}
	\caption{\footnotesize{The relative errors of  the extrapolation eigenvalues on uniform triangulations and the adaptive method for Example~3.}}
	\label{fig:CMorley}
\end{figure}

The eigenfunction with respect to the smallest eigenvalue for this case is singular. Adaptive method is a popular and efficient way to deal with singular cases. 
For eigenvalue problems by the Morley element, an efficient and reliable a posteriori error estimator of Morley elements was proposed  in \cite{shen2015posteriori}. This a posteriori error estimator is adopted here to generate adaptive grids. Denote the smallest approximate eigenvalues by the Morley element on these adaptive grids  by $\lambda_M^{\rm A}$. For the approximate eigenvalues $\lambda_M^{\rm A}$, compute the asymptotically exact a posterior error estimate in \eqref{FdefineCR} and denote the postprocessed approximate eigenvalues in \eqref{lam:F} by $\lambda_M^{\rm R,A}$. Since the eigenvalues of the problem in consideration are unknown, 
%Since the eigenvalues of the problem in consideration are unknown, we use the aforementioned  adaptive mesh with 524288 degrees of freedom, and take the eigenvalue by the Argyris element on this adaptive grid   as the reference eigenvalue. Similar to Example 3, the eigenfunction in this case is singular. Therefore, we adopt the adaptive method in Example 3 in this case. 
we use the adaptive postprocessed eigenvalue $\Mlam^{\rm R, A}$ on an adaptive mesh with 3454396 degrees of freedom as the reference eigenvalue. Fig.~\ref{fig:crack} plots the approximate eigenfunction on adaptive meshes, and
Fig.~\ref{fig:CMorley} plots the errors of approximate eigenvalues $\Mlam$, $\Mlam^{\rm EXP}$, $\Mlam^{\rm R}$, $\Mlam^{\rm A}$ and $\Mlam^{\rm R, A}$. In this case, the discrete eigenvalue $\Mlam$ converges at the rate 1. We compute the extrapolation eigenvalue $\Mlam^{\rm EXP}$ in \eqref{exmethod} with $\alpha=1$. Since the eigenfunction is singular,  the postprocessing scheme improves the accuracy of eigenvalues on uniform meshes without improving the convergence rate, while the extrapolation method can improve the convergence rate to 2.00. Fig.~\ref{fig:CMorley} also implies that the postprocessing scheme is also effective on adaptive meshes.

\subsection{Example 4}
Consider the model problem \eqref{variance}  on a Dumbbell-split domain with a slit $\Omega=(-1,1)\times (-1,5)\backslash ([0,1)\times \{0\}\cup [1,3]\times [-0.75,1])$. 
The boundary condition is $u={\partial u\over \partial n} =0$. The initial triangulation is shown in Fig.~\ref{fig:dumbbell}.
\begin{figure}[!ht]
\begin{center}
\begin{tikzpicture}[xscale=1.7,yscale=1.7]
\draw[ultra thick] (-1,-1) -- (-1,1);
\draw[ultra thick] (-1,1) -- (1,1);
\draw[ultra thick] (1,-0.75) -- (1,1);
\draw[-] (1,-1) -- (1,-0.75);
\draw[ultra thick] (1,-0.75) -- (3,-0.75);
\draw[-] (0.75,-0.75) -- (1,-0.75);
\draw[-] (3,-0.75) -- (3.25,-0.75);
\draw[ultra thick] (3,-0.75) -- (3,1);
\draw[-] (3,-1) -- (3,-0.75);
\draw[ultra thick] (3,1) -- (5,1);
\draw[ultra thick] (5,1) -- (5,-1);
\draw[ultra thick] (-1,-1) -- (5,-1); 
\draw[-] (-1,0) -- (0,1);
\draw[-] (1,0) -- (0,1);
\draw[-] (-1,0) -- (0,-1);
\draw[-] (0,-1) -- (1,0);
\draw[ultra thick] (-1,0) -- (0,0);
\draw[-] (0,0) -- (1,0);
\draw[-] (0,1) -- (0,-1);
\draw[-] (0,0) -- (1,-1);
\draw[-] (0.5,-0.5) -- (1,-0.5);
\draw[-] (1,-0.5) -- (0.5,-1);
\draw[-] (0.5,-0.5) -- (0.5,-1);
\draw[-] (0.75,-0.75) -- (0.75,-1);
\draw[-] (1,-1) -- (1.25,-0.75);
\draw[-] (1.25,-1) -- (1.25,-0.75);
\draw[-] (1.25,-1) -- (1.5,-0.75);
\draw[-] (1.5,-1) -- (1.5,-0.75);
\draw[-] (1.5,-1) -- (1.75,-0.75);
\draw[-] (1.75,-1) -- (1.75,-0.75);
\draw[-] (1.75,-1) -- (2,-0.75);
\draw[-] (2,-1) -- (2,-0.75);
\draw[-] (2.25,-1) -- (2,-0.75);
\draw[-] (2.25,-1) -- (2.25,-0.75);
\draw[-] (2.5,-1) -- (2.25,-0.75);
\draw[-] (2.5,-1) -- (2.5,-0.75);
\draw[-] (2.75,-1) -- (2.5,-0.75);
\draw[-] (2.75,-1) -- (2.75,-0.75);
\draw[-] (3,-1) -- (2.75,-0.75);
\draw[-] (3,0) -- (4,1);
\draw[-] (5,0) -- (4,1);
\draw[-] (5,0) -- (4,-1);
\draw[-] (3,0) -- (4,-1);
\draw[-] (3,0) -- (5,0);
\draw[-] (4,1) -- (4,-1);
\draw[-] (3,-0.5) -- (3.5,-1);
\draw[-] (4,0) -- (3,-1);
\draw[-] (3,-0.5) -- (3.5,-0.5);
\draw[-] (3.5,-1) -- (3.5,-0.5);
\draw[-] (3.25,-1) -- (3.25,-0.75);
\node[below] at (-1,-1) {(-1,-1)};
\node[below] at (1,-1) {(1,-1)};
\node[below] at (3,-1) {(3,-1)};
\node[below] at (5,-1) {(5,-1)};
\node[above] at (-1,1) {(-1,1)};
\node[above] at (1,1) {(1,1)};
\node[above] at (3,1) {(3,1)};
\node[above] at (5,1) {(5,1)};
\node[above,right] at (1,-0.6) {(1,-0.75)};
\node[above,left] at (3,-0.6) {(3,-0.75)}; 
\end{tikzpicture}
\caption{The initial triangulation of Dumbbell domain $\Om$ for Example 4.}
\label{fig:dumbbell}
\end{center}
\end{figure}
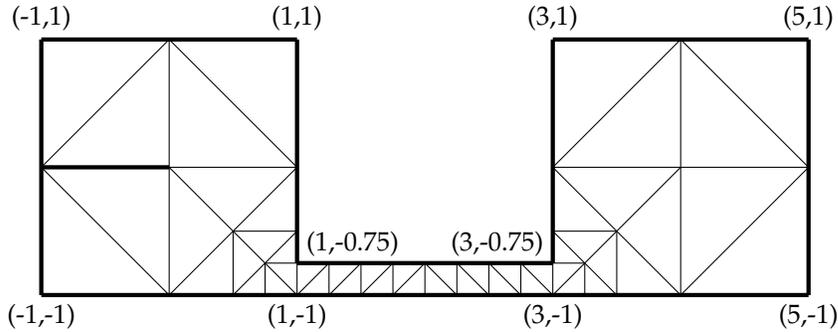

\begin{figure}[!ht]
\setlength{\abovecaptionskip}{0pt}
\setlength{\belowcaptionskip}{0pt}
\centering
\includegraphics[width=13cm,height=6cm]{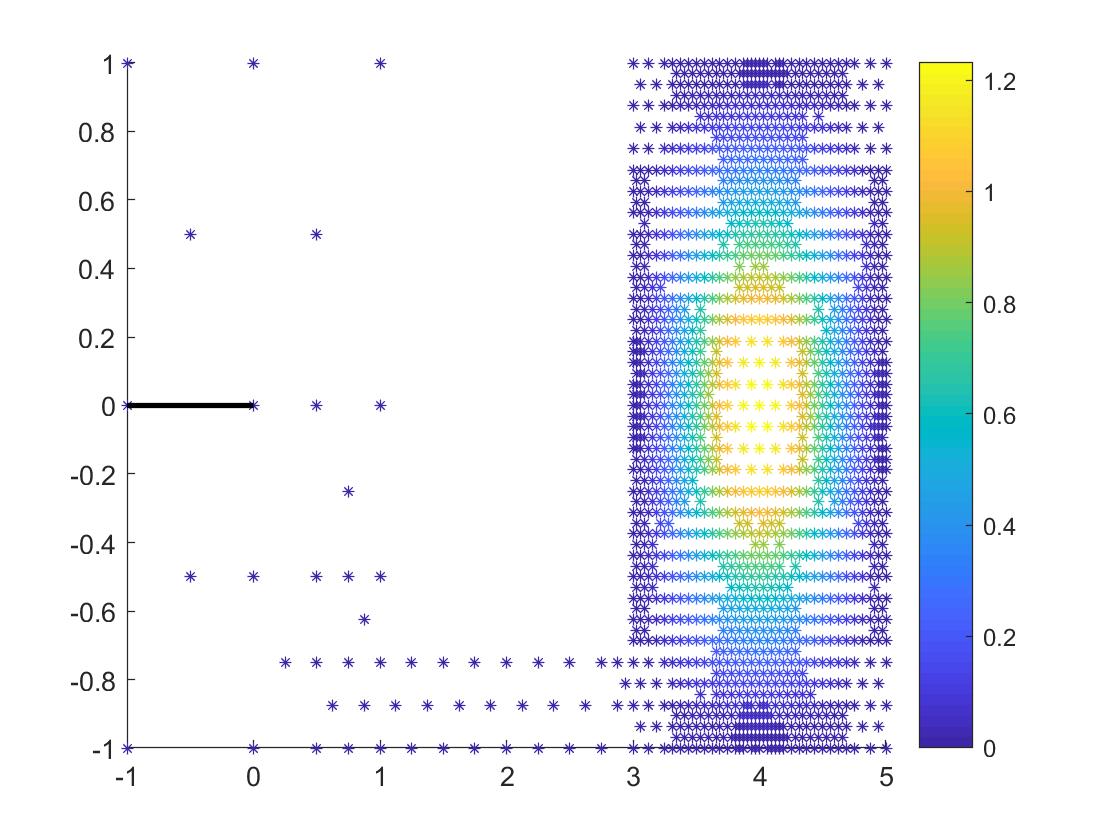} 
\includegraphics[width=13cm,height=6cm]{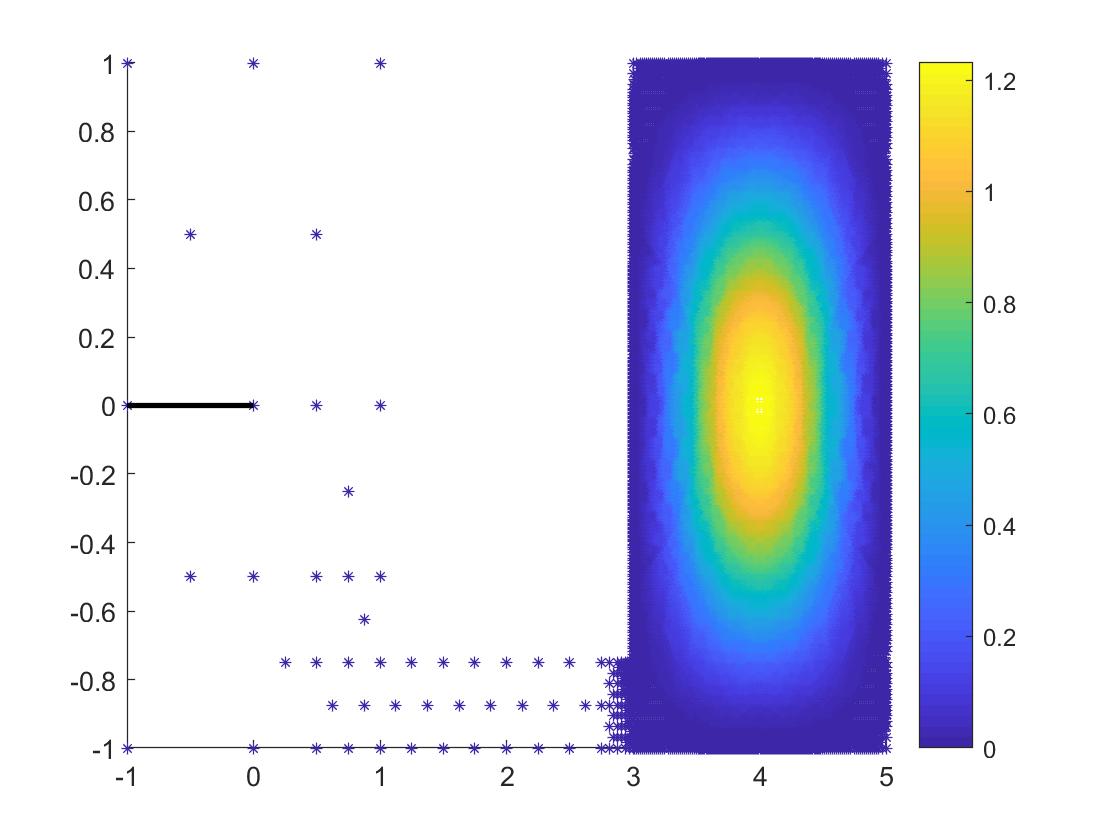}
\caption{\footnotesize{ The first eigenfunctions on adaptive triangulations  for Example 4 with $6409$ and $166591$ d.o.f, respectively.}}
\label{fig:Dumbbell1st}
\end{figure}

Fig.~\ref{fig:Dumbbell1st}  and Fig.~\ref{fig:Dumbbell4th} plot the first and the fourth eigenfunctions on adaptive meshes, respectively.
Fig.~\ref{fig:DMorley} plots the relative errors of the first and the fourth eigenvalues $\Mlam$ by the Morley element, the extrapolation eigenvalue $\Mlam^{\rm EXP}$ in \eqref{exmethod}, the adaptive eigenvalue $\lambda_M^{\rm A}$, and the postprocessed eigenvalues $\lambda_M^{\rm R}$ and $\lambda_M^{\rm R,A}$. 
The first eigenvalue $\Mlam$ converges at the rate~2, while the fourth eigenvalue $\Mlam$ only converges at the rate~1. This implies a relatively higher regularity of the first eigenfunction. This explains why the extrapolation method and postprocessing technique on uniform meshes even have better performance than adaptive methods. For the first eigenvalue, both the extrapolation method and the postprocessing technique can improve the convergence rate to 3.
For the fourth eigenvalue, the convergence rate of eigenvalues by the postprocessing technique stays 
at 1, while that of eigenvalues by the extrapolation method \eqref{exmethod} with $\alpha=1$ increases to~2.  For the case that the eigenfunction is not smooth enough, Fig.~\ref{fig:DMorley} shows that the proposed postprocessing technique is effective on both uniform meshes and adaptive meshes.

\begin{figure}[!ht]
	\setlength{\abovecaptionskip}{0pt}
	\setlength{\belowcaptionskip}{0pt}
	\centering
	\includegraphics[width=6cm,height=5cm]{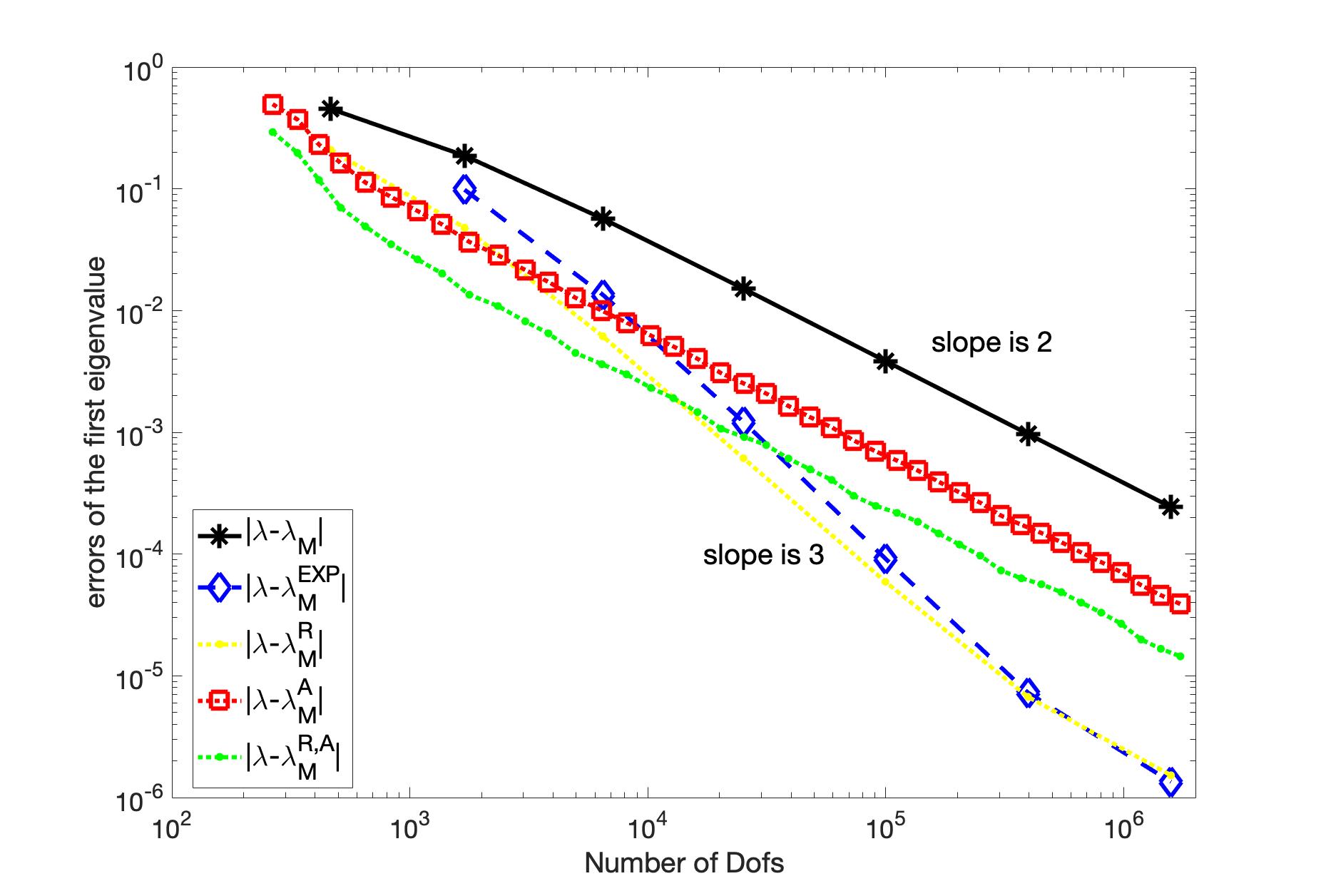}
	\includegraphics[width=6cm,height=5cm]{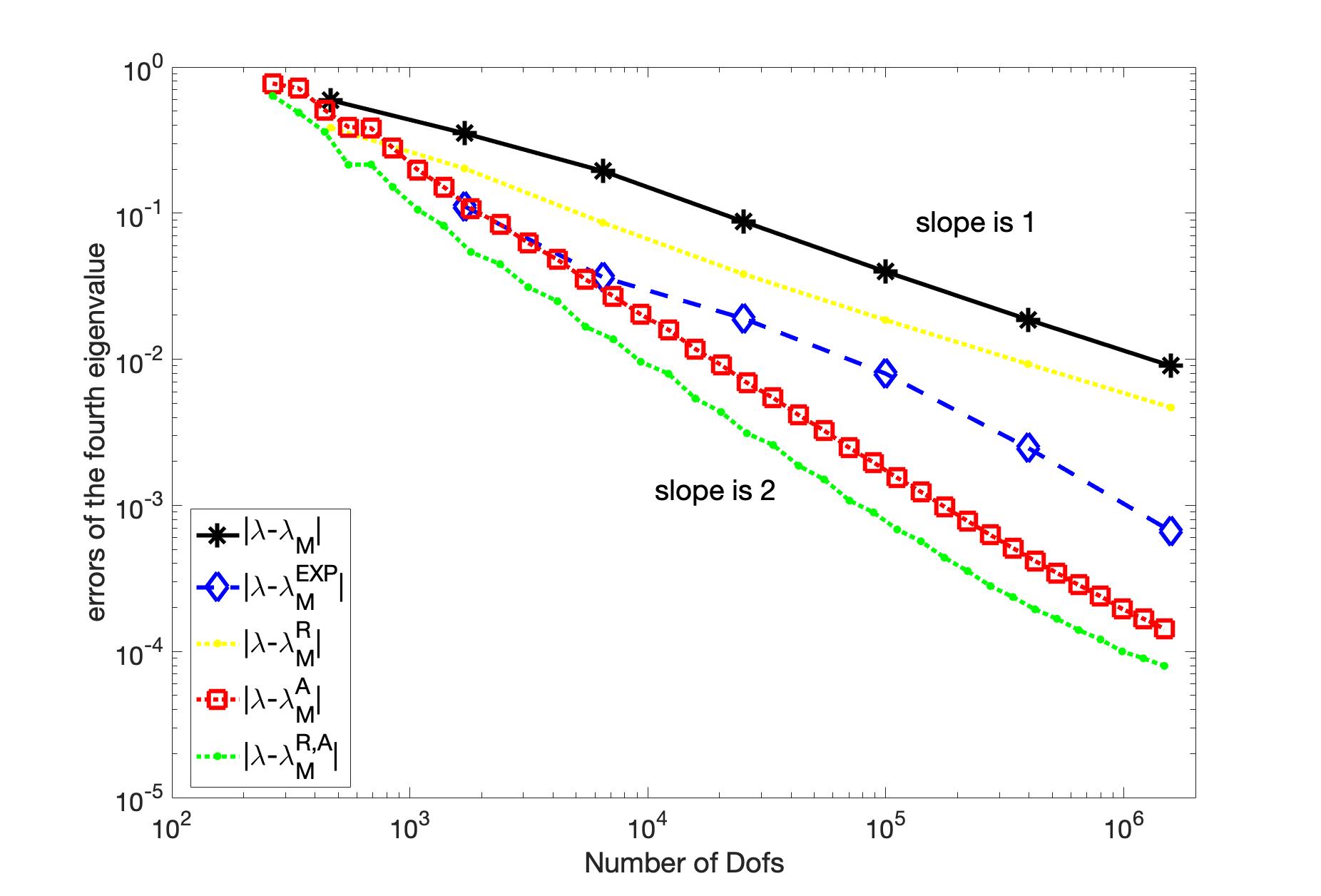} 
	\caption{\footnotesize{The relative errors of the first (left) and the fourth (right) eigenvalue on uniform triangulations and the adaptive method for Example 4.}}
	\label{fig:DMorley}
\end{figure}

\begin{figure}[!ht]
\setlength{\abovecaptionskip}{0pt}
\setlength{\belowcaptionskip}{0pt}
\centering 
\includegraphics[width=13cm,height=6cm]{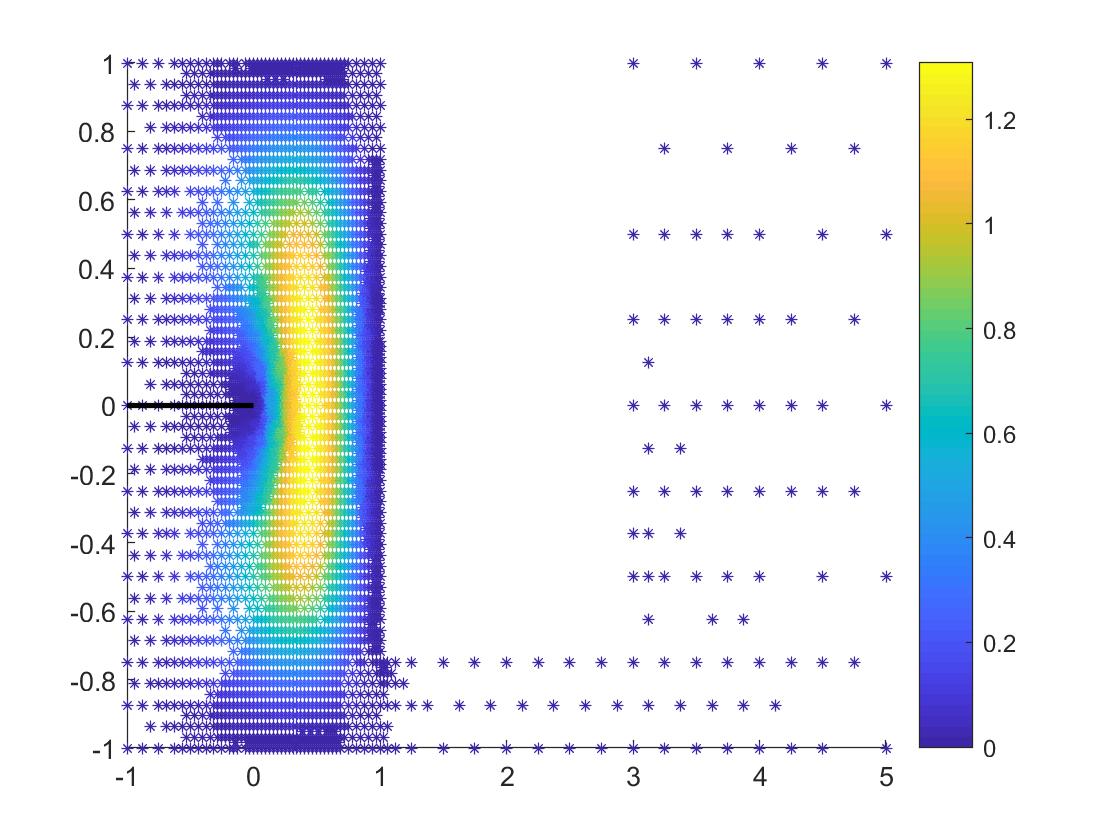} 
\includegraphics[width=13cm,height=6cm]{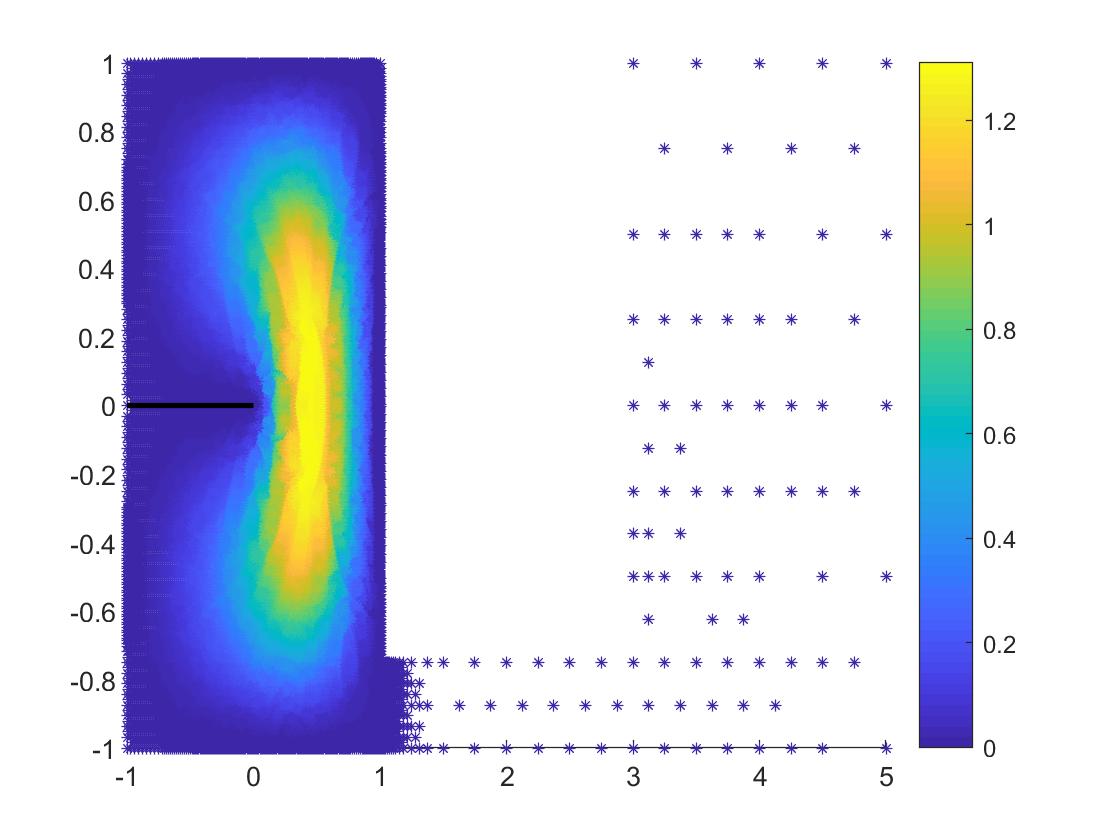}
\caption{\footnotesize{ The fourth eigenfunctions on adaptive triangulations  for Example 4 with $15832$ and $523521$ d.o.f, respectively.}}
\label{fig:Dumbbell4th}
\end{figure}

\section*{Acknowledgments}
The authors are greatly indebted to Professor Jun Hu from Peking University for many useful discussions and the guidance. 
The first author also wishes to  thank the partial support from the Center for Computational Mathematics and Applications, the Pennsylvania State University.

\bibliographystyle{siamplain}
\bibliography{bibifile}

\end{document}